\setlist{topsep=2mm, itemsep=0.8mm}
\numberwithin{equation}{section}
\newcommand{\from}{\colon}
\newcommand{\st}{\colon}
\newcommand{\embed}{\hookrightarrow}
\newcommand{\overbar}[1]{\mkern
  1.5mu\overline{\mkern-1.5mu#1\mkern-1.5mu}\mkern 1.5mu}
\newcommand{\conj}[1]{\overbar{#1}}
\newcommand{\wconj}[1]{\overline{#1}}
\renewcommand{\mod}{\ \mathrm{mod}\ }
\DeclarePairedDelimiter{\abs}{|}{|}
\DeclarePairedDelimiter{\norm}{\lVert}{\rVert}
\DeclarePairedDelimiter{\paren}{(}{)}
\DeclarePairedDelimiter{\set}{\{}{\}}
\DeclarePairedDelimiter{\floor}{\lfloor}{\rfloor}
\DeclarePairedDelimiter{\ceil}{\lceil}{\rceil}
\DeclarePairedDelimiter{\scal}{\langle}{\rangle}
\newcommand{\R}{\mathbb{R}}
\newcommand{\C}{\mathbb{C}}
\newcommand{\Q}{\mathbb{Q}}
\newcommand{\Z}{\mathbb{Z}}
\newcommand{\F}{\mathbb{F}}
\newcommand{\Proj}{\mathbb{P}}
\newcommand{\Half}{\mathcal{H}}
\newcommand{\Crv}{\mathcal{C}}
\newcommand{\Otilde}{\scalebox{1}[.9]{\ensuremath{\widetilde{O}}}}
\newcommand{\eps}{\varepsilon}
\newcommand{\Fund}{\mathcal{F}}
\newcommand{\Gts}{\Sigma}
\newcommand{\wrongc}{{\rm\texttt{WrongCandidate}}}
\newcommand{\insufp}{{\rm\texttt{InsufficientPrecision}}}
\newcommand{\unknown}{\rm\texttt{Unknown}}
\newcommand{\mf}[1]{\boldsymbol{#1}}
\DeclareMathOperator{\Jac}{Jac}
\DeclareMathOperator{\Sym}{Sym}
\DeclareMathOperator{\Sp}{Sp}
\DeclareMathOperator{\GSp}{GSp}
\DeclareMathOperator{\GL}{GL}
\DeclareMathOperator{\SL}{SL}
\DeclareMathOperator{\Tr}{Tr}
\DeclareMathOperator{\End}{End}
\DeclareMathOperator{\im}{Im}
\DeclareMathOperator{\re}{Re}
\DeclareMathOperator{\logp}{\log^+}
\DeclareMathOperator{\MF}{MF}
\DeclareMathOperator{\logpt}{\log_2^+}
\DeclareMathOperator{\M}{\mathsf{M}}
\newcommand{\dual}{\vee}
\newcommand{\mat}[4]{\left(\begin{matrix}#1&#2\\#3&#4\end{matrix}\right)}
\newcommand{\tmat}[4]{\left(\begin{smallmatrix}#1&#2\\#3&#4\end{smallmatrix}\right)}
\newcommand{\tvec}[2]{\left(\begin{smallmatrix}#1\\#2\end{smallmatrix}\right)}
\newtheorem{thm}[equation]{Theorem}
\crefname{thm}{Theorem}{Theorems}
\newtheorem{lem}[equation]{Lemma}
\crefname{lem}{Lemma}{Lemmas}
\newtheorem{prop}[equation]{Proposition}
\crefname{prop}{Proposition}{Propositions}
\newtheorem{cor}[equation]{Corollary}
\newtheorem{assumption}[equation]{Assumption}
\theoremstyle{definition}
\newtheorem{defn}[equation]{Definition}
\crefname{defn}{Definition}{Definitions}
\newtheorem{rem}[equation]{Remark}
\newtheorem{conv}[equation]{Convention}
\newtheorem{algo}[equation]{Algorithm}
\crefname{algo}{Algorithm}{Algorithms}
\newcommand{\algoinput}[1]{~\\ \noindent\textbf{Input:} #1}
\newcommand{\algooutput}[1]{~\\ \noindent\textbf{Output:} #1}
\title{Evaluating modular equations for abelian surfaces}
\author{Jean Kieffer}
\date{\today}
\begin{document}

\begin{abstract}
  We design efficient algorithms to evaluate modular equations of Siegel and
  Hilbert type for abelian surfaces over number fields or finite fields using
  complex approximations. Their output is provably correct when the associated
  graded ring of modular forms over~$\Z$ is explicitly known; this includes the
  Siegel case, and the Hilbert case for the quadratic fields of
  discriminant~$5$ and~$8$. As part of the proofs, we establish new correctness
  and complexity results for certain key numerical algorithms on period
  matrices in genus~2, namely the reduction algorithm to the fundamental domain, the AGM
  method, and computing big period matrices and RM structures.
\end{abstract}

\maketitle

\section{Introduction}
\label{sec:intro}

\subsection{Modular equations}
\label{subsec:intro-modeq}

Modular equations of Siegel and Hilbert type for abelian
surfaces~\cite{brokerModularPolynomialsGenus2009,
  milioQuasilinearTimeAlgorithm2015, martindaleHilbertModularPolynomials2020,
  milioModularPolynomialsIlbert2020} are higher-dimensional analogues of the
classical modular polynomials~$\Phi_\ell$ for elliptic curves. If~$\ell$ is a
prime, then the Siegel modular equations of level~$\ell$ are three rational
fractions in four variables~$J_1,J_2,J_3,X$, denoted by
\begin{displaymath}
  \Psi_{\ell,1},\Psi_{\ell,2},\Psi_{\ell,3}\in \Q(J_1,J_2,J_3)[X].
\end{displaymath}
They encode the presence of isogenies between principally polarized~(p.p.)
abelian surfaces. More precisely, let~$A$ and~$A'$ be two generic
p.p.~abelian surfaces over a field whose characteristic is prime to~$\ell$,
and denote their Igusa invariants by $(j_1,j_2,j_3)$ and $(j_1',j_2',j_3')$
respectively. Then the equalities
\begin{displaymath}
  \Psi_{\ell,1}(j_1,j_2,j_3,j_1') = 0
  \quad\text{and}\quad
  j_k' = \dfrac{\Psi_{\ell,k}(j_1,j_2,j_3,j_1')}{\partial_X\Psi_{\ell,1}(j_1,j_2,j_3,j_1')}
  \quad \text{for }k=2,3,
\end{displaymath}
where~$\partial_X$ denotes the partial derivative with respect to~$X$, hold if
and only if~$A$ and~$A'$ are $\ell$-isogenous (i.e.~related by an isogeny of
degree~$\ell^2$ with isotropic kernel in~$A[\ell]$) over an algebraic closure
of their base field. Similarly, modular equations of Hilbert type encode the presence of
certain cyclic isogenies between p.p.~abelian surfaces with real
multiplication~(RM) by a fixed real quadratic order.

From a computational point of view, modular equations allow us to detect and
compute isogenies without prior knowledge of their kernels:
see~\cite{elkiesEllipticModularCurves1998, bostanFastAlgorithmsComputing2008}
for elliptic curves, and~\cite{kiefferComputingIsogeniesModular2025} for
abelian surfaces. In contrast, all other available methods to compute
isogenies, to the author's knowledge, use an encoding of the kernel as
input~\cite{veluIsogeniesEntreCourbes1971,
  couveignesComputingFunctionsJacobians2015,
  lubiczComputingSeparableIsogenies2015,
  dudeanuCyclicIsogeniesAbelian2022}. This makes modular equations an essential
tool in the celebrated SEA algorithm for counting points on elliptic curves
over finite fields~\cite{schoofCountingPointsElliptic1995}, specifically in
Elkies's method, which uses isogenies to compute cyclic subgroups of the
elliptic curve.

However, modular equations for abelian surfaces are very large objects. For
instance, the total size of Siegel modular equations of level~$\ell$
is~$O(\ell^{15}\log\ell)$~\cite{kiefferDegreeHeightEstimates2022}, a
prohibitive upper bound that we expect to be accurate.  Already for~$\ell=3$,
their total size is approximately
410~MB~\cite{milioDatabaseModularPolynomials2016}. This has led to concerns
about their usability in practice.

\subsection{Main results}
\label{subsec:intro-main}

In this paper, we argue that precomputing modular equations in full is not an
efficient strategy in higher dimensions. In most contexts, including Elkies's
method, we only need \emph{evaluations} of modular equations, and possibly
their derivatives, at a given point $(j_1,j_2,j_3)\in L^3$, where~$L$ is a
number field; finite fields reduce to this case via lifts. These evaluations of
modular equations are much smaller than the full polynomials~$\Psi_{\ell,k}$,
so we should calculate them directly.

In this paper, we design algorithms to precisely do this. They are inspired by
existing methods to evaluate elliptic modular polynomials via complex
approximations~\cite{engeComputingModularPolynomials2009}, and build on the
certified evaluation of genus~2 theta constants in quasi-linear
time~\cite{kiefferCertifiedEwtonSchemes2022}. In certain key cases, their
complexity is quasi-linear in the output size, as exemplified by the following
result on Siegel modular equations.

\begin{thm}
  \label{thm:intro}
  Given prime numbers~$p$ and~$\ell$ and a generic tuple
  $(j_1,j_2,j_3)\in\F_p^3$, one can evaluate $\Psi_{\ell,k}(j_1,j_2,j_3,X)$ and
  $\partial_{J_n} \Psi_{\ell,k}(j_1,j_2,j_3,X)$ for $1\leq k,n\leq 3$ as
  elements of~$\F_p[X]$ in time $\Otilde(\ell^6\log p)$.
\end{thm}

Here ``generic'' means that~$(j_1,j_2,j_3)$ lies in $\mathcal{U}(\F_p)$ for a
certain nonempty Zariski open subset~$\mathcal{U}$ of affine 3-space
(independent of~$p$.) One can further describe~$\mathcal{U}$ as the nonvanishing locus of
certain explicitly computable modular forms. As usual, the
notation~$\Otilde(N)$ means~$O(N \log^k N)$ for some fixed~$k\in \Z_{\geq 0}$.

Similar results hold for Hilbert modular equations. We focus on the case of RM
by~$\Z_F$, the ring of integers in $F = \Q(\sqrt{5})$ or~$F = \Q(\sqrt{8})$, of
discriminant $\Delta_F\in \{5,8\}$. In these two cases, one can advantageously
replace the three Igusa invariants~$j_1,j_2,j_3$ by two \emph{Gundlach
  invariants} $g_1,g_2$, which are coordinates on the moduli space of
p.p.~abelian surfaces with RM by~$\Z_F$.  There is a notion of
$\beta$-isogenies between such abelian surfaces whenever~$\beta\in \Z_F$ is
totally positive~\cite{dudeanuCyclicIsogeniesAbelian2022}; their degree is
$N_{F/\Q}(\beta)$. The Hilbert modular equations of level~$\beta$ relate the
Gundlach invariants of the source and target of a $\beta$-isogeny.

\begin{thm}
  \label{thm:intro-hilbert}
  Given a totally positive element~$\beta\in \Z_F$ of prime norm~$\ell\in \Z$
  which is prime to~$\Delta_F$, a prime~$p$, and a generic
  pair~$(g_1,g_2)\in \smash{\F_p^2}$, one can evaluate the Hilbert modular equations of
  level~$\beta$ and their derivatives at~$(g_1,g_2)$ as elements of~$\F_p[X]$
  in time~$\Otilde(\ell^2\log p)$.
\end{thm}

In both \cref{thm:intro,thm:intro-hilbert}, we save a factor of~$\log p$ in the
running time when the input values are quotients of small integers: see for
instance \cref{thm:hilbert-nf} in the case $L=\Q$. The complexity estimates are small enough to
make Elkies's method viable for abelian surfaces over finite
fields~\cite{kiefferCountingPointsAbelian2022}.  An implementation of the
algorithms described in this paper, based on the C libraries
FLINT~\cite{theflintteamFLINTFastLibrary2024} and
Arb~\cite{johanssonArbEfficientArbitraryprecision2017}, is publicly
available~\cite{kiefferHDMELibraryEvaluation2022}, though not completely up to
date at the time of writing.

We can also handle other types of modular equations, for instance Hilbert
modular equations in Igusa invariants for general quadratic fields. The price
we pay is that the evaluation algorithm would become heuristic, as we explain
below.

Note that other methods based on isogeny
graphs~\cite{brokerModularPolynomialsIsogeny2012,
  sutherlandEvaluationModularPolynomials2013} allow us to evaluate
the elliptic modular polynomial~$\Phi_\ell$ over finite fields with better
asymptotic complexities than numerical methods, particularly in terms of
memory.  However, they crucially rely on writing~$\Phi_\ell$ in full over small
auxiliary finite fields. It is unclear whether this strategy can still be
competitive when the the dimension of the moduli space grows.

Although it is not the focus of the present paper, our methods would also apply
in genus~1 to compute the classical modular polynomials~$\Phi_\ell$,
recasting~\cite{engeComputingModularPolynomials2009} in a provably correct
setting. It seems plausible that this complex-analytic approach yields the
fastest algorithm to evaluate~$\Phi_\ell$ at a given point over~$\Q$, say, in
the feasible range.

\subsection{Overview of the algorithms}
\label{subsec:intro-overview}

Throughout, we rely on interval arithmetic as implemented in the Arb library
for numerical computations over~$\C$. Consider the Siegel case for
simplicity. Choose a base number field~$L$ of degree $d_L$ over~$\Q$ and Igusa
invariants $(j_1,j_2,j_3)\in L^3$. For each of the $d_L$ embeddings
$\mu:L\embed \C$, the evaluation algorithm for modular equations roughly
proceeds as follows.
\begin{enumerate}
\item \label{step:intro-periods} We compute a period matrix~$\tau$ in the
  Siegel upper half space~$\Half_2$ whose Igusa invariants are
  $\mu(j_1),\mu(j_2),\mu(j_3)$ with the AGM
  method~\cite[Chap.~9]{dupontMoyenneArithmeticogeometriqueSuites2006}.
\item \label{step:intro-enum} We enumerate suitable period matrices
  corresponding to abelian surfaces which are $\ell$- or $\beta$-isogenous to
  the abelian surface attached to~$\tau$.
\item \label{step:intro-theta} We evalute the Igusa invariants at these new
  period matrices, by reducing them to the fundamental domain in~$\Half_2$ and
  applying Dupont's
  algorithm~\cite[Chap.~10]{dupontMoyenneArithmeticogeometriqueSuites2006} (a
  Newton scheme around the AGM), proved to be correct
  in~\cite{kiefferCertifiedEwtonSchemes2022}.
\item \label{step:intro-poly} Finally, we
  recover~$\mu(\Psi_{\ell,k}(j_1,j_2,j_3))\in \C[X]$ from an analytic formula
  involving the Igusa invariants computed in step~\ref{step:intro-theta}.
\end{enumerate}

Combining these steps allows us to numerically evaluate modular equations up to
any desired precision~$N$, i.e.~up to an absolute error at most $2^{-N}$, in
quasi-linear time in~$N$; in practice, step~\ref{step:intro-theta} dominates
the running time.  (Let us mention here that step~\ref{step:intro-enum} is
considerably more involved in the Hilbert case, as we must take the RM
structure into account.) In the cases of \cref{thm:intro,thm:intro-hilbert}, we
actually take advantage of the explicit description of the corresponding graded
ring of modular forms with integral Fourier coefficients to separately evaluate
the numerator and denominator of modular equations in
step~\ref{step:intro-poly}.

After this numerical evaluation step, we must recognize each coefficient~$x$ of
the evaluated modular equations as an element of~$L$ from its complex
approximations. Ensuring the correctness of this step is delicate. When we
separately evaluate the numerator and denominator of modular equations, we can
ensure that the coefficients of~$x$ in a specific~$\Q$-basis of~$L$ are
\emph{integers} rather than rational numbers. Then, running the complex
computations at increasing precisions, for instance repeatedly doubling~$N$,
until these integers can be uniquely recognized is a provably correct
algorithm. (The explicit, but astronomical, upper bounds
of~\cite[Thm.~5.19]{kiefferDegreeHeightEstimates2022} on the height of~$x$ are
of no practical use.) In general, a heuristic approach is to compute tentative
values of~$x\in L$ from approximations at increasing precisions, and stop when
these values stabilize. The complexity of this heuristic algorithm is still
bounded above because the size of modular equations is
bounded~\cite[Thm.~1.1]{kiefferDegreeHeightEstimates2022}, and its result may
often be validated by certifying the existence of isogenies between abelian
surfaces as in~\cite{costaRigorousComputationEndomorphism2019}.

\subsection{Key ingredients of the proofs}
\label{subsec:intro-ingredients}

In the course of proving \cref{thm:intro,thm:intro-hilbert}, we establish new
results on certain key numerical algorithms on the moduli space of p.p.~abelian
surfaces which might be of independent interest.
\begin{itemize}
\item We translate Streng's complexity analysis of the reduction algorithm for
  period matrices in~\cite[§6]{strengComputingIgusaClass2014} into a completely
  numerical setting.
\item We revisit Dupont's formulation of the AGM method
  in~\cite[Chap.~9]{dupontMoyenneArithmeticogeometriqueSuites2006} to turn it
  into a provably correct algorithm with an explicit complexity estimate in
  terms of the input values, not only in terms of the required precision.
\item We describe a new algorithm which, given a (small) period matrix,
  computes the attached big period matrix in quasi-linear time. This addresses
  a disadvantage of the AGM method compared to numerical integration:
  see~\cite[Rem.~2.2.7]{costaRigorousComputationEndomorphism2019}.
\item Finally, we explain how to use big period matrices to compute RM
  structures on complex tori in a provably correct way, without any algebraic
  verification step.
\end{itemize}
This being done, completing the proofs of the main theorems is only a matter of
tracking down the precision losses in numerical computations throughout the
evaluation algorithm.

\subsection{Organization of the paper}
\label{subsec:intro-orga}

In~\cref{sec:modeq}, we recall the definition of Siegel and Hilbert modular
equations, and provide explicit formulas for their numerators and
denominators. In~\cref{sec:structure}, we describe the evaluation algorithms
for these modular equations in more detail. \Cref{sec:agm} is devoted to the key
numerical algorithms on period matrices as described above. We establish the
complexity of evaluating modular equations numerically in \cref{sec:precision},
and complete the proof of the main theorems in \cref{sec:proofs}.

\subsection{Notation}
\label{subsec:intro-notation}

Throughout this text, we use bold letters (such as $\mf{f}$ or $\mf{G}_2$) to
refer to modular functions. The symbols~$\pi$ and~$i$ have their usual values
-- that is, $\exp(\pi i) = -1$. We denote the $n\times n$ identity matrix by
$I_n$. If~$U$ is any matrix, we denote its transpose by $U^t$ and its inverse
transpose by $U^{-t}$.  If we write a $4\times 4$ matrix~$\gamma$ as
\begin{displaymath}
  \gamma =
  \begin{pmatrix}
    a & b \\ c & d
  \end{pmatrix},
\end{displaymath}
we always mean that $a,b,c,d$ are $2\times 2$ blocks.

We denote the absolute value of the largest coefficient in a polynomial
(resp.~vector or matrix)~$P$ by~$\abs{P}$. The notation~$\norm{P}$ refers to
the $L^2$ norm for vectors and the induced $L^2$ norm for matrices. We write
$\log_2$ for the logarithm in base~$2$. For $x\in\R$, we also define
\begin{displaymath}
  \logp x = \log\max\set{1,x}\quad \text{and}\quad
  \logpt x = \log_2\max\set{1,x}.
\end{displaymath}
The notation $\M(N)$ refers to the cost of multiplying two $N$-bit integers. As
above, the notation $\Otilde(N)$ refers to $O(N \log^k N)$ for some value
of~$k\geq 0$; in particular $\M(N) = \Otilde(N)$.

For further notation concerning Siegel modular forms, Hilbert modular forms,
and period matrices, we refer to~§\ref{subsec:siegel-invariants},
§\ref{subsec:hilbert-invariants} and~§\ref{subsec:fund} respectively.

\subsection{Acknowledgements}

This work originates from my 2021 PhD dissertation at the University of Bordeaux,
France. I warmly thank my former supervisors Damien Robert and Aurel Page for
their suggestions and advice. This paper has further benefited from discussions
with Raymond van Bommel, Shiva Chidambaram, Fabien Cléry, and Edgar
Costa. Finally, I thank the anonymous referee for their very helpful comments.

\section{Modular equations for abelian surfaces}
\label{sec:modeq}

In this section, we recall the analytic formulas defining Siegel and
Hilbert modular equations for abelian surfaces. We also study their
denominators using the structures of the corresponding rings of modular
forms over~$\Z$.

\subsection{Invariants on the Siegel moduli space}
\label{subsec:siegel-invariants}

Let~$\Half_2$ be the set of symmetric $2\times 2$ complex
matrices~$\tau$ such that~$\im(\tau)$ is positive definite. Our notation
for the action of the symplectic group~$\GSp_4(\Q)$ on~$\Half_2$ is
as follows: for all $\gamma\in\GSp_4(\Q)$ and $\tau\in\Half_2$,
we write
\begin{displaymath}
  \gamma\tau = (a\tau+b)(c\tau+d)^{-1} \quad\text{and}\quad
  \gamma^*\tau = c\tau+d, \quad \text{where } \gamma=\mat{a}{b}{c}{d}.
\end{displaymath}
We also write $\Sp_4(\Z) = \Gamma(1)$. The
quotient~$\Gamma(1)\backslash\Half_2$ is a coarse moduli space for p.p.~abelian
surfaces over~$\C$~\cite[Thm.~8.2.6]{birkenhakeComplexAbelianVarieties2004}: a
given point~$\tau\in \Half_2$ corresponds to the abelian surface
\begin{displaymath}
  A(\tau) = \C^2/ (\tau\Z^2+\Z^2).
\end{displaymath}
We say that~$\tau$ is a period matrix of $A(\tau)$. Moreover,
$\Gamma(1)\backslash \Half_2$ is the set of complex points of an algebraic
variety defined over~$\Q$.  For a subring $R\subset \C$, we denote by
$\MF(\Gamma(1),R)$ the graded~$R$-algebra of Siegel modular forms of even
weight defined over~$R$, i.e.~whose Fourier coefficients belong
to~$R$~\cite[§4]{vandergeerSiegelModularForms2008}.

The~$\C$-algebra $\MF(\Gamma(1),\C)$ is free over four generators $\mf{h}_4$, $\mf{h}_6$,
$\mf{h}_{10}$, $\mf{h}_{12}$, whose weights appear as
subscripts~\cite{igusaSiegelModularForms1962}.  These modular forms can be
defined in terms of genus~$2$ theta constants (of level $(2,2)$), which we now
introduce. Let $a,b\in \{0,1\}^2$. Then the theta
constant of characteristic~$(a,b)$ is the holomorphic function on~$\Half_2$
defined by
\begin{equation}
  \label{eq:theta}
  \mf{\theta}_{a,b}(\tau) = \sum_{n\in\Z^2} \exp\paren[\Big]{\pi i \left(n+\tfrac a2\right)^t
      \tau\left(n+\tfrac a2\right) + \pi i \left(n+\tfrac a2\right)^t \tfrac b2},
\end{equation}
where $a,b$ are seen as vertical vectors. It is known that $\mf{\theta}_{a,b}$
is not indentically zero exactly when the dot product~$a^t b$ is even. When $a$
or $b$ assume specific values, we omit braces in the notation and write for
instance $\mf{\theta}_{00,10}$. We further abbreviate $\mf{\theta}_{00,00}$
as~$\mf{\theta}_0$.

The expressions of~$\mf{h}_4,\ldots,\mf{h}_{12}$ as polynomials in theta constants can be
found in~\cite[eq.~(7.1)]{strengComputingIgusaClass2014}. In
particular,~$\mf{h}_{10}$ is the product of all squares of even theta constants, and
is therefore a scalar multiple of the traditional cusp
form~$\mf{\chi}_{10}$. Therefore~$\tau\in \Half_2$ is the period matrix of a
genus~$2$ hyperelliptic curve over~$\C$ if and only if~$\mf{h}_{10}(\tau)\neq 0$.

Following \cite[§2.1]{strengComputingIgusaClass2014}, we
define the Igusa invariants as follows:
\begin{equation}
  \label{eq:igusa}
  \mf{j}_1 = \dfrac{\mf{h}_4\mf{h}_6}{\mf{h}_{10}},
  \quad \mf{j}_2 = \dfrac{\mf{h}_4^2\mf{h}_{12}}{\mf{h}_{10}^2},
  \quad \mf{j}_3 = \dfrac{\mf{h}_4^5}{\mf{h}_{10}^2}.
\end{equation}
They realize a birational map between $\Gamma(1)\backslash\Half_2$ and the
projective space~$\Proj^3$. This map is defined over~$\Q$ by the~$q$-expansion
principle~\cite[§V.1.5]{faltingsDegenerationAbelianVarieties1990}. Therefore,
the Igusa invariants~\eqref{eq:igusa} are suitable coordinates in which modular
equations for p.p.~abelian surfaces can be written. Note however that they are
not defined everywhere, and that the matrix
\begin{equation}
  \label{eq:dJ}
  \mf{dJ}(\tau) := \paren[\bigg]{\frac{\partial \mf{j}_k}{\partial \tau_n}}_{1\leq k,n\leq 3} \quad
  \text{where } \tau = \mat{\tau_1}{\tau_3}{\tau_3}{\tau_2}
\end{equation}
is not always invertible, for instance at points in~$\Half_2$ that have a
nontrivial stabilizer in~$\Gamma(1)$. In this paper, we always include in
the genericity conditions (e.g.~in \cref{thm:intro}) the invertibility
of~$\mf{dJ}(\tau)$.\footnote{Note that $\mf{dJ}$ can be viewed as a
  matrix-valued modular function: see~§\ref{subsec:differentials}.}

\begin{rem}
  \label{rem:16-thetas}
  To avoid this perhaps annoying requirement, one possibility would be to use
  theta constants instead, as the map
  $\tau\mapsto \paren[\big]{\mf{\theta}_{a,b}^2(\tau)}_{a,b\in \{0,1\}^2}$
  from~$\Half_2$ to~$\Proj^{15}(\C)$ is locally biholomorphic~\cite[Thm.~4
  p.\,189]{igusaThetaFunctions1972}. We stress however that working out this
  approach in detail would require substantially more work: see
  \cref{rem:dtheta-as-dj}.
\end{rem}

We will also use the structure
of~$\MF(\Gamma(1),\Z)$. Igusa~\cite{igusaRingModularForms1979} computed
a set of fourteen generators of this ring. For our purposes, it is sufficient to note
that $\Z[\mf{h}_4,\mf{h}_6,\mf{h}_{10},\mf{h}_{12}]\subset\MF(\Gamma(1),\Z)$ is a reasonably large
subring.

\begin{lem}
  \label{lem:siegelmfgens} Let $\mf{f}\in\MF(\Gamma(1),\Z)$ be a modular
  form of even weight~$w \in \Z_{\geq 0}$. Then
  \begin{displaymath}
    2^{\floor{7w/4}} 3^{\floor{w/4}}\mf{f}\in \Z[\mf{h}_4,\mf{h}_6,\mf{h}_{10},\mf{h}_{12}].
  \end{displaymath}
\end{lem}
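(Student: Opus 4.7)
The plan is to reduce the lemma to a finite, explicit check using Igusa's classical description of fourteen generators of $\MF(\Gamma(1),\Z)$. Since~$\MF(\Gamma(1),\C)$ is freely generated as a~$\C$-algebra by $h_4,h_6,h_{10},h_{12}$, any $f\in\MF(\Gamma(1),\Z)$ admits a unique expansion as a polynomial in these four modular forms with rational coefficients (rationality follows from the $q$-expansion principle together with the fact that the~$h_i$'s have rational Fourier coefficients). The lemma is therefore a bound on the $2$-adic and $3$-adic valuations of the denominators appearing in this expansion, in terms of~$w$.

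First, I would list Igusa's fourteen generators $\psi_1,\dots,\psi_{14}$ of~$\MF(\Gamma(1),\Z)$ together with their weights $w_1,\dots,w_{14}$. Four of them are $h_4,h_6,h_{10},h_{12}$ and contribute no denominator. For each remaining generator~$\psi_i$, I would compute its unique expansion as a polynomial in $h_4,h_6,h_{10},h_{12}$ over~$\Q$ (using, e.g., the explicit expressions in terms of theta constants from~\cite{igusa_GradedRingThetaconstants1964} and matching $q$-expansions at low orders) and verify that every coefficient has denominator dividing $2^{\lfloor 7w_i/4\rfloor}3^{\lfloor w_i/4\rfloor}$. This is a bounded, mechanical computation.

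Next, I would deduce the general case by multiplicativity. Since the~$\psi_i$'s generate~$\MF(\Gamma(1),\Z)$ as a~$\Z$-algebra, any $f\in\MF(\Gamma(1),\Z)$ of weight~$w$ can be written as
\[
  f = \sum_I c_I \prod_i \psi_i^{a_{I,i}}, \qquad c_I\in\Z,\ \ \sum_i a_{I,i}w_i = w \text{ whenever } c_I\neq 0.
\]
The denominator of the monomial $\prod_i\psi_i^{a_{I,i}}$, viewed as an element of $\Q[h_4,h_6,h_{10},h_{12}]$, divides
$2^{N_2(I)}3^{N_3(I)}$ with $N_2(I) = \sum_i a_{I,i}\lfloor 7w_i/4\rfloor$ and $N_3(I) = \sum_i a_{I,i}\lfloor w_i/4\rfloor$. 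Since each of these sums is an integer bounded above by $\sum_i a_{I,i}\cdot 7w_i/4 = 7w/4$ (resp.~$w/4$), it is also bounded by $\lfloor 7w/4\rfloor$ (resp.~$\lfloor w/4\rfloor$). Summing over~$I$ preserves the bound since the coefficients $c_I$ are integers, and the claim follows.

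The only real obstacle is the tedious bookkeeping involved in step one, as Igusa's higher-weight generators have large weight (up to weight~$48$) and their expansions in $h_4,h_6,h_{10},h_{12}$ may be lengthy. However, these identities are finite-dimensional linear-algebra problems in each graded piece, and the denominators are determined once and for all by the explicit theta-constant formulas; so no new conceptual ingredient is needed beyond Igusa's results.
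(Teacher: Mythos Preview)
Your approach is essentially the same as the paper's: verify the bound for each of Igusa's fourteen generators, then extend to all of $\MF(\Gamma(1),\Z)$ by multiplicativity using that $\sum_i a_{I,i}\lfloor cw_i\rfloor$ is an integer at most $cw$, hence at most $\lfloor cw\rfloor$. The paper states this even more tersely, simply asserting that the result holds for the four low-weight generators and that direct computation from Igusa's formulas handles the remaining ten.

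One minor factual slip: the four low-weight generators of $\MF(\Gamma(1),\Z)$ in Igusa's list are \emph{not} $h_4,h_6,h_{10},h_{12}$ themselves, but rather
\[
  X_4 = 2^{-2}h_4,\quad X_6 = 2^{-2}h_6,\quad X_{10} = -2^{-12}h_{10},\quad X_{12} = 2^{-15}h_{12}.
\]
So these four do contribute denominators, and one must check that $2\leq\lfloor 7\cdot 4/4\rfloor$, $2\leq\lfloor 7\cdot 6/4\rfloor$, $12\leq\lfloor 7\cdot 10/4\rfloor$, $15\leq\lfloor 7\cdot 12/4\rfloor$ (and that no $3$-denominator appears), all of which are immediate. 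This does not affect the structure of your argument.
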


\begin{proof}
  The lowest weight generators of~$\MF(\Gamma(1),\Z)$ are
  \begin{displaymath}
    \mf{X}_4 = 2^{-2}\mf{h}_4,\quad \mf{X}_6 = 2^{-2}\mf{h}_6,\quad
    \mf{X}_{10} = -2^{-12}\mf{h}_{10},\quad \mf{X}_{12} = 2^{-15}\mf{h}_{12}.
  \end{displaymath}
  The result holds for these four generators. Direct computations
  using the formulas from \cite[p.~153]{igusaRingModularForms1979}
  show that it also holds for the ten others.
\end{proof}

\begin{prop}
  \label{prop:mf-poly}
  Let~$\mf{f}\in \MF(\Gamma(1),\Z)$ be a modular form of
  even weight~$w \in \Z_{\geq 0}$. Let~$m\geq 0$ and~$0\leq n\leq 4$ be integers such that
  $4\floor{w/6}+w = 10m+4n$. Then there exists a unique
  polynomial~$Q_{\mf{f}}\in \Z[J_1,J_2,J_3]$ such that the following equality
  of Siegel modular functions holds:
  \begin{displaymath}
    Q_{\mf{f}}(\mf{j}_1,\mf{j}_2,\mf{j}_3)
    = 2^{\floor{7w/4}} 3^{\floor{w/4}} \dfrac{\mf{h}_4^{\floor{w/6}}}{\mf{h}_{10}^m\mf{h}_4^n} \mf{f}.
  \end{displaymath}
  The degree of~$Q_{\mf{f}}$ in~$J_1,J_2,J_3$ is bounded above
  by~$\floor{w/6}, \floor{w/12}, \floor{w/12}$ respectively,
  and the total degree of~$Q_{\mf{f}}$ is bounded above by~$\floor{w/6}$.
\end{prop}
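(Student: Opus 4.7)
The plan is to produce $Q_f$ monomial by monomial. By \cref{lem:siegelmfgens}, the modular form $P := 2^{\lfloor 7w/4 \rfloor} 3^{\lfloor w/4 \rfloor} f$ lies in $\Z[h_4, h_6, h_{10}, h_{12}]$; since $f$ has weight $w$ and the four generators $h_4, h_6, h_{10}, h_{12}$ are algebraically independent, the corresponding polynomial expression for $P$ is weighted homogeneous of weight $w$. It therefore suffices to handle a single monomial $h_4^{\alpha} h_6^{\beta} h_{10}^{\gamma} h_{12}^{\delta}$ with $4\alpha + 6\beta + 10\gamma + 12\delta = w$ and then sum.

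For such a monomial, I apply the tautological identities $h_6 = j_1 h_{10}/h_4$ and $h_{12} = j_2 h_{10}^2/h_4^2$; multiplying by $h_4^e/(h_4^b h_{10}^a)$ with $e = \lfloor w/6 \rfloor$ then turns it into
\begin{equation*}
j_1^\beta j_2^\delta \cdot h_4^{\alpha + e - b - \beta - 2\delta}\, h_{10}^{\beta + \gamma + 2\delta - a}.
\end{equation*}
Setting $5n := \alpha + e - b - \beta - 2\delta$ and $-2n := \beta + \gamma + 2\delta - a$, the two resulting expressions for $n$ coincide by combining the weight relation with the hypothesis $10a + 4b = w + 4e$, and if $n$ is a nonnegative integer the monomial collapses to $j_1^\beta j_2^\delta j_3^n \in \Z[j_1, j_2, j_3]$. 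Integrality of $n$ reduces to the congruence $a \equiv \beta + \gamma \pmod 2$, which is visible on reducing those same two relations modulo $4$.

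I view nonnegativity of $n$ as the heart of the argument: from $6\beta + 12\delta \leq w$ (using $\alpha, \gamma \geq 0$) one obtains $\beta + 2\delta \leq \lfloor w/6 \rfloor = e$, so $5n \geq \alpha - b \geq -4$, and since $5n$ is a multiple of $5$ this forces $5n \geq 0$. Summing the monomials of $P$ then yields $Q_f \in \Z[J_1, J_2, J_3]$, and uniqueness is immediate from the algebraic independence of $j_1, j_2, j_3$ as modular functions on $\Gamma(1) \backslash \Half_2$. For the degree bounds, $\beta \leq \lfloor w/6 \rfloor$ and $\delta \leq \lfloor w/12 \rfloor$ follow directly from the weight relation, while $5n \leq \alpha + e \leq w/4 + w/6 = 5w/12$ gives $n \leq \lfloor w/12 \rfloor$. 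The total degree bound $\beta + \delta + n \leq e$ rewrites as $\alpha + 4\beta + 3\delta \leq 4e + b$: the real LP maximum of the left-hand side is $2w/3$, attained at $\beta = w/6$, and a short integer case analysis on $w \bmod 6$ — which matches exactly the definition of $(a,b)$ — upgrades this to the sharp integer bound $4e + b$.
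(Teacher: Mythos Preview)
Your proof is correct and follows the same approach as the paper: reduce to $\Z[h_4,h_6,h_{10},h_{12}]$ via \cref{lem:siegelmfgens}, then rewrite monomial by monomial using the defining relations of $j_1,j_2,j_3$. The paper defers these computations and the degree bounds to \cite[Lem.~4.7]{kieffer_DegreeHeightEstimates2022}, whereas you carry them out explicitly; the closing integer case analysis for the total-degree bound (genuinely needed only when $w\equiv 4\pmod 6$, where the real LP value $2w/3$ overshoots $4e+b$) is terse but checks out.
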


\begin{proof}
  By \cref{lem:siegelmfgens}, we can rewrite the right hand side as
  \begin{displaymath}
    \dfrac{\mf{h}_4^{\floor{w/6}}}{\mf{h}_{10}^m\mf{h}_4^n} \mf{g}
  \end{displaymath}
  for some~$\mf{g}\in \Z[\mf{h}_4,\mf{h}_6,\mf{h}_{10},\mf{h}_{12}]$. Then the equalities
  \begin{displaymath}
    \mf{h}_4\mf{h}_6 = \mf{j}_1 \mf{h}_{10}, \quad \mf{h}_4^2 \mf{h}_{12} = \mf{j}_2 \mf{h}_{10}^2,
    \quad \mf{h}_4^5 = \mf{j}_3 \mf{h}_{10}^2
  \end{displaymath}
  show that this quotient can be rewritten as a polynomial
  in~$\mf{j}_1,\mf{j}_2,\mf{j}_3$ with integer coefficients satisfying the
  required degree bounds, as detailed
  in~\cite[Lem.~4.7]{kiefferDegreeHeightEstimates2022}. The resulting
  polynomial~$Q_{\mf{f}}$ is unique because~$\mf{j}_1,\mf{j}_2,\mf{j}_3$ are
  algebraically independent.
\end{proof}

\subsection{Siegel modular equations}
\label{subsec:siegel-modeq}

Let $\ell\in\Z$ be a prime.  Given~$\tau\in \Half_2$, the $\ell$-isogenies with
domain~$A(\tau)$ can be described as follows.

\begin{lem}
  \label{lem:siegel-representatives}
  For every~$\tau\in \Half_2$, the p.p.~abelian surfaces that are
  $\ell$-isogenous to $A(\tau)$ are the surfaces $A(\gamma\tau)$,
  where~$\gamma$ runs through the $\ell^3+\ell^2+\ell+1$ elements of the coset space
  \begin{displaymath}
    \Gamma(1)\,\backslash\, \Gamma(1) \mat{I_2}{0}{0}{\ell I_2} \Gamma(1).
  \end{displaymath}
\end{lem}

\begin{proof}
  Define the subgroup~$\Gamma^0(\ell)$ of
  $\Gamma(1) = \Sp_4(\Z)$ as follows:
  \begin{displaymath}
    \Gamma^0(\ell) = \left\{\mat{a}{b}{c}{d}\in\Gamma(1)\st b=0\mod\ell\right\}.
  \end{displaymath}
  We have $[\Gamma(1):\Gamma^0(\ell)] = \ell^3+\ell^2+\ell+1$.  For
  each~$\tau\in \Half_2$, the p.p.~abelian surfaces that are $\ell$-isogenous to
  $A(\tau)$ are exactly the abelian surfaces $A(\tfrac1\ell \gamma\tau)$
  where~$\gamma$ runs through the coset space
  $\Gamma^0(\ell)\backslash \Gamma(1)$; the proof of this fact is analogous
  to~\cite[Thm.~3.2]{brokerModularPolynomialsGenus2009}.  Using the action
  of~$\GSp_4(\Q)$ on~$\Half_2$, one can write for every~$\gamma\in \Gamma(1)$:
  \begin{displaymath}
    \tfrac1\ell \gamma\tau = \paren[\bigg]{\mat{I_2}{0}{0}{\ell I_2} \gamma}  \tau. \smallskip
  \end{displaymath}
  Further, we can check that
  $\gamma\mapsto \smash{\mat{I_2}{0}{0}{\ell I_2}} \gamma$ is a bijection from
  $\Gamma^0(\ell)\backslash\Gamma(1)$ to the above coset space.
\end{proof}

An important fact is that one can construct a set $C_\ell$ of representatives
of the coset space in \cref{lem:siegel-representatives} whose lower left
$2\times 2$ blocks are all zero, or in other words, which leave the cusp at
infinity invariant. This will matter in the numerical computations later on, as
it implies that~$\gamma\tau$ is not too far from the fundamental domain
in~$\Half_2$ when~$\tau$ satisfies the same property. Such a set is presented
in~\cite[Prop.~10.1]{cleryConstructingVectorvaluedSiegel2015}. Explicitly,
$C_\ell$ consists of matrices of the following form, where $x, y, z$ run
through $\{0,\ldots,\ell-1\}$:
\begin{displaymath}
  \begin{pmatrix} 1 & 0 & x & y \\ 0 & 1 & y & z \\ 0 & 0 & \ell & 0\\ 0 & 0 & 0 & \ell
  \end{pmatrix},
  \begin{pmatrix}
    \ell & 0 & 0 & 0 \\ -x & 1 & 0 & y \\ 0 & 0 & 1 & x \\ 0 & 0 & 0 & \ell
  \end{pmatrix},
  \begin{pmatrix}
    0 & -\ell & 0 & 0 \\ 1 & 0 & x & 0 \\ 0 & 0 & 0 & -1 \\ 0 & 0 & \ell & 0
  \end{pmatrix}
  \text{ and }
  \begin{pmatrix}
    \ell & 0 & 0 & 0\\ 0 & \ell & 0 & 0\\ 0 & 0 & 1 & 0 \\ 0 & 0 & 0 & 1
  \end{pmatrix}.
\end{displaymath}

The Siegel modular equations of
level~$\ell$~\cite{brokerModularPolynomialsGenus2009,milioQuasilinearTimeAlgorithm2015}
are the three multivariate rational fractions
$\Psi_{\ell,k} \in \Q(J_1,J_2,J_3)[X]$ for $1\leq k\leq 3$ such that for every
$\tau\in\Half_2$ where everything is well defined, we have
\begin{equation} \label{eq:siegelmodeq}
  \begin{aligned}
    \Psi_{\ell,1}\paren[\big]{\mf{j}_1(\tau),\mf{j}_2(\tau),\mf{j}_3(\tau), X}
    &= \prod_{\gamma\in C_\ell} \bigl(X - \mf{j}_1(\gamma\tau)\bigr),\\
    \Psi_{\ell,2}\paren[\big]{\mf{j}_1(\tau),\mf{j}_2(\tau),\mf{j}_3(\tau), X}
    &= \sum_{\gamma\in C_\ell} \mf{j}_2(\gamma\tau)
    \prod_{\gamma'\in C_\ell\backslash\{\gamma\}} \bigl(X - \mf{j}_1(\gamma'\tau)\bigr),\\
    \Psi_{\ell,3}\paren[\big]{\mf{j}_1(\tau),\mf{j}_2(\tau),\mf{j}_3(\tau), X}
    &= \sum_{\gamma\in C_\ell} \mf{j}_3(\gamma\tau)
    \prod_{\gamma'\in C_\ell\backslash\{\gamma\}} \bigl(X - \mf{j}_1(\gamma'\tau)\bigr).
  \end{aligned}
\end{equation}
The degrees of the polynomials $\Psi_{\ell,k}$ in $X$ are at most
$\ell^3+\ell^2+\ell+1$, and their total degrees in $J_1,J_2,J_3$ are
at most $10(\ell^3+\ell^2+\ell+1)/3$ by
\cite[Prop.~4.10]{kiefferDegreeHeightEstimates2022}. The height of
their coefficients in~$\Q$ is~$O(\ell^3\log\ell)$ by
\cite[Thm.~1.1]{kiefferDegreeHeightEstimates2022}.

\subsection{Denominators of Siegel modular equations}
\label{subsec:siegel-denom}

We call $Q_\ell\in \Z[J_1,J_2,J_3]$ a \emph{denominator} of the Siegel modular
equations $\Psi_{\ell,k}$ if for each $1\leq k\leq 3$, we have
\begin{displaymath}
  Q_\ell\Psi_{\ell,k} \in \Z[J_1,J_2,J_3,X].
\end{displaymath}
Our goal is to describe such a denominator~$Q_\ell$ analytically.  For every
$\tau\in\Half_2$, we set
\begin{equation}
  \label{eq:gl}
  \mf{g}_\ell(\tau) = 2^{6}\prod_{\gamma\in C_\ell} 2^{-16} \mf{h}_{10}^2(\gamma\tau).
\end{equation}
One may interpret $\mf{g}_\ell$ as the image of $\smash{\mf{h}_{10}^2}$ under a certain Hecke
operator of level~$\ell$, although these operators are usually defined as sums
rather than products.  By~\eqref{eq:igusa} and~\eqref{eq:siegelmodeq}, the
polynomials
$\mf{g}_\ell(\tau) \Psi_{\ell,k}\paren[\big]{j_1(\tau),j_2(\tau),j_3(\tau)}$ for
$1\leq k\leq 3$ are then equal to, respectively,
\begin{equation}
  \label{eq:siegelnum}
  \begin{aligned}
    2^6 &\prod_{\gamma\in C_\ell} 2^{-16}\paren[\big]{\mf{h}_{10}^2(\gamma\tau) X - \mf{h}_4 \mf{h}_6 \mf{h}_{10}(\gamma\tau)},\\
    &\sum_{\gamma\in C_\ell} 2^{-10} \mf{h}_4^2 \mf{h}_{12}(\gamma\tau) \prod_{\gamma'\in C_\ell\setminus\{\gamma\}}
      2^{-16} \paren[\big]{\mf{h}_{10}(\gamma'\tau)^2 X - \mf{h}_4\mf{h}_6\mf{h}_{10}(\gamma'\tau)},\\
    \text{and}\quad &\sum_{\gamma\in C_\ell} 2^{-10} \mf{h}_4^5(\gamma\tau) \prod_{\gamma'\in C_\ell\setminus\{\gamma\}}
      2^{-16} \paren[\big]{\mf{h}_{10}(\gamma'\tau)^2 X - \mf{h}_4\mf{h}_6\mf{h}_{10}(\gamma'\tau)}.\\
  \end{aligned}
\end{equation}
For every $0\leq m\leq \ell^3+\ell^2+\ell+1$ and~$1\leq k\leq 3$, we also let
$\mf{f}_{\ell,k,m}(\tau)$ be the coefficient of~$\smash{X^m}$ in the polynomial
$\mf{g}_\ell(\tau) \Psi_{\ell,k}\paren[\big]{\mf{j}_1(\tau),\mf{j}_2(\tau),\mf{j}_3(\tau)}$. We further write
\begin{displaymath}
  w_\ell = 20(\ell^3 + \ell^2 + \ell + 1).
\end{displaymath}

\begin{prop}
  \label{prop:gl-ZZ} The functions $\mf{g}_\ell$ and $\mf{f}_{\ell,k,m}$ for
  $1\leq k\leq 3$ and $0\leq m\leq \ell^3+\ell^2+\ell+1$ are Siegel modular
  forms of weight~$w_\ell$ defined over~$\Z$.
\end{prop}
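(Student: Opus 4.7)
The plan is to prove modularity of the required weight and integrality of the Fourier coefficients separately, handling $g_\ell$ first and then bootstrapping to $f_{\ell,k,i}$.

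For the modularity of $g_\ell$, I would work from formula~\eqref{eq:gl-old}. For any $\eta\in\Gamma(1)$, the substitution $\gamma\mapsto\gamma\eta$ permutes a set of representatives for $\Gamma^0(\ell)\backslash\Gamma(1)$; combining this with the cocycle relation $\det((\gamma\eta)^*\tau)=\det(\gamma^*(\eta\tau))\det(\eta^*\tau)$ and the transformation law of $h_{10}^2$ under $\Sp_4(\Z)$, one obtains $g_\ell(\eta\tau)=\det(\eta^*\tau)^{w_\ell}\,g_\ell(\tau)$, giving $\Gamma(1)$-invariance of weight $w_\ell=20(\ell^3+\ell^2+\ell+1)$. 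Holomorphy at the cusp is automatic by the Koecher principle and by the cusp-form property of $h_{10}$.

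For the modularity of $f_{\ell,k,i}$, I would argue that, viewed as a polynomial in $X$ with meromorphic coefficients on $\Half_2$, the product $g_\ell\cdot \Psi_{\ell,k}(j_1,j_2,j_3)$ has each coefficient modular of weight $w_\ell$. By~\eqref{eq:siegelmodeq} the coefficients of $\Psi_{\ell,k}$ are elementary-symmetric-type expressions in the tuples $(j_1(\linv\gamma\tau),j_2(\linv\gamma\tau),j_3(\linv\gamma\tau))$ over $\gamma\in C_\ell$. Using~\eqref{eq:igusa}, the factors of $h_{10}^2$ in $g_\ell$ clear exactly the $h_{10}$-denominators in the $j_i$, leaving a polynomial in $h_4,h_6,h_{10},h_{12}$ evaluated at the various $\linv\gamma\tau$, which is invariant under coset permutation. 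The same Hecke-style cocycle argument as for $g_\ell$ then yields modularity of weight $w_\ell$.

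For integrality over $\Z$, my strategy is to exploit the clean product formula~\eqref{eq:gl-D}. By \Cref{lem:siegelmfgens} one has $2^{-24}h_{10}^2=X_{10}^2$ with $X_{10}\in\MF(\Gamma(1),\Z)$, so $g_\ell(\tau)=\prod_{\gamma\in D_\ell}X_{10}^2(\gamma\tau)$. Each $\gamma\in D_\ell$ is upper-block-triangular with integer entries and determinant $\ell^2$, so each factor has a Fourier expansion with coefficients in a cyclotomic ring $\Z[\zeta_{\ell^2}]$; since the full product is symmetric in the coset representatives, it is fixed by the natural Galois action of $\mathrm{Gal}(\Q(\zeta_{\ell^2})/\Q)$. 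Combined with the modular transformation over $\Q$, this forces the Fourier coefficients into $\Z$. For $f_{\ell,k,i}$ one proceeds analogously, expanding the coefficients as polynomials in $h_4,h_6,h_{10},h_{12}$ at $\gamma\tau$ and invoking \Cref{lem:siegelmfgens} to convert to the integral generators before taking the symmetric function.

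The main obstacle will be the integrality step for $f_{\ell,k,i}$: one has to track the $2$-power and $\ell$-power denominators introduced by the normalizations in \Cref{lem:siegelmfgens} and by the $\linv$ scalings, and verify that after taking symmetric functions they collapse to genuine $\Z$-coefficients rather than only to $\Z[\tfrac{1}{\ell}]$-coefficients. The $\ell$-power cancellation encoded in the passage from~\eqref{eq:gl-old} to~\eqref{eq:gl-D} is what makes this feasible, and I expect the remainder to reduce to a careful but essentially routine $q$-expansion analysis once the Galois-invariance framework is in place.
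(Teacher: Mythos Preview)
Your plan is close to the paper's, and for $g_\ell$ it is essentially identical: use~\eqref{eq:gl-D} to write $g_\ell=\prod_{\gamma\in D_\ell}X_{10}^2(\gamma\tau)$, observe that each factor has a Fourier expansion with coefficients in $\Z[\zeta_{\ell^2}]$ because $\gamma$ is upper-block-triangular with integer entries, and then descend to~$\Z$. The difference is in how the descent to~$\Q$ is obtained, and this is where the paper's argument is shorter. You propose to check that $\mathrm{Gal}(\Q(\zeta_{\ell^2})/\Q)$ permutes the coset factors and hence fixes the symmetric expression; the paper instead separates rationality from algebraic integrality. For rationality it observes that $g_\ell$ is, up to a rational scalar, the image of $h_{10}^2$ under the Hecke correspondence for $\ell$-isogenies, hence defined over~$\Q$, and then that each quotient $f_{\ell,k,i}/g_\ell$ is a coefficient of $\Psi_{\ell,k}\in\Q(J_1,J_2,J_3)[X]$ and is therefore a modular function over~$\Q$, so the $f_{\ell,k,i}$ are over~$\Q$ as well. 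Algebraic integrality is then obtained by the very same $q$-expansion observation already used for~$g_\ell$, applied to the sums of products that make up the~$f_{\ell,k,i}$. This route avoids having to match the Galois action with a coset permutation, and it never invokes \Cref{lem:siegelmfgens}; in particular the $2$-power bookkeeping you anticipate from that lemma is not part of the paper's argument. The $\ell$-power cancellation you flag is exactly what the passage from~\eqref{eq:gl-old} to~\eqref{eq:gl-D} records, so no further $\ell$-adic analysis is needed either.
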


\begin{proof}
  First, we prove that $\mf{g}_\ell$ and $\mf{f}_{\ell,k,m}$ are Siegel modular forms of
  the correct weights. The quotient $\mf{f}_{\ell,k,m}/\mf{g}_{\ell}$, being a
  coefficient of the Siegel modular equations, is a modular function of weight
  zero, and both $\mf{f}_{\ell,k,m}$ and~$\mf{g}_\ell$ are holomorphic on~$\Half_2$
  by~\eqref{eq:gl} and~\eqref{eq:siegelnum}. Thus it is sufficient to show
  that~$\mf{g}_\ell$ satisfies the required transformation rule.

  Let~$\eta\in \Gamma(1)$. For each~$\gamma\in C_\ell$, there
  exists~$\eta_\gamma\in \Gamma(1)$ and~$\gamma'\in C_\ell$ such that
  $\gamma\eta = \eta_\gamma \gamma'$. Further, $\gamma\mapsto \gamma'$ is a
  bijection from~$C_\ell$ to $C_\ell$. We then have
  \begin{displaymath}
    \mf{g}_\ell(\eta \tau) = 2^{6} \prod_{\gamma\in C_\ell} 2^{-16}
    \det(\eta_\gamma^*(\gamma'\tau))^{20} \mf{h}_{10}^2(\gamma'\tau) = \mf{g}_\ell(\tau) \prod_{\gamma\in C_\ell} \det(\eta_\gamma^*(\gamma'\tau))^{20}.
  \end{displaymath}
  Moreover, by the cocycle formula, we have for each $\gamma\in C_\ell$:
  \begin{align*}
    \det(\eta_\gamma^*(\gamma'\tau)) &= \det((\eta_\gamma\gamma')^*\tau) \det(\gamma'^*\tau)^{-1}\\
                                     &= \det(\gamma^*(\eta\tau)) \det(\eta^*\tau) \det(\gamma'^*\tau)^{-1}
  \end{align*}
  In this last expression, $\det(\gamma^*(\eta\tau))$ and $\det(\gamma'^*\tau)$
  are independent of~$\tau$: they are the determinants of the lower right
  $2\times 2$ blocks of~$\gamma$ and~$\gamma'$ respectively. Therefore,
  \begin{displaymath}
    \prod_{\gamma\in C_\ell} \det(\eta_\gamma^*(\gamma'\tau)) = \det(\eta^*\tau)^{\ell^3 + \ell^2 + \ell + 1},
  \end{displaymath}
  so~$\mf{g}_\ell$ is indeed a Siegel modular form of the correct weight.

  Second, we show that the Fourier coefficients of $\mf{g}_{\ell}$ and
  $\mf{f}_{\ell,k,m}$ are algebraic integers. By~\eqref{eq:gl}
  and~\eqref{eq:siegelnum}, both $\mf{g}_\ell$ and $\mf{f}_{\ell,k,i}$ are
  polynomials in holomorphic functions of the form $\mf{h}(\gamma\tau)$,
  where~$\gamma\in C_\ell$ and~$\mf{h}$ is a Siegel modular form defined
  over~$\Z$. More precisely,
  $\mf{h}\in \{2^{-16} \mf{h}_{10}^2, 2^{-16} \mf{h}_4\mf{h}_6\mf{h}_{10},
  2^{-10} \mf{h}_4^2 \mf{h}_{12}, 2^{-10} \mf{h}_4^5\}$. Such a modular
  form~$\mf{h}$ admits a Fourier expansion: letting~$\tau_1,\tau_2,\tau_3$ be
  as in~\eqref{eq:dJ}, we can write
  \begin{displaymath}
    \mf{h}(\tau) = \sum_{n = (n_1,n_2,n_3)\in \Z^3} h_n e^{2 \pi i (n_1 \tau_1 + n_2 \tau_2 + n_3\tau_3)}
  \end{displaymath}
  where~$h_n \in \Z$ for every~$n$, and $h_n = 0$ unless $n_1\geq 0$,
  $n_2\geq 0$ and $n_3^2 \leq 4n_1n_2$.

  Let~$a,b,0,d$ be the $2\times 2$ blocks of~$\gamma$. Then
  $\gamma\tau = (a\tau + b) d^{-1} = \tfrac 1\ell (a \tau a^t + b a^t)$, so the
  entries of $\gamma\tau$ are linear combinations of $\tau_1/\ell$,
  $\tau_2/\ell$, $\tau_3/\ell$, and $1/\ell$ with integral
  coefficients. Substituting these expressions in the Fourier expansion
  of~$\mf{h}$, we see that $\mf{h}(\gamma\tau)$ can be expanded as a power
  series in the ring
  \begin{displaymath}
    \Z \bigl[e^{2\pi i/\ell}, e^{2\pi i \tau_3/\ell}, e^{-2\pi i \tau_3/\ell}\bigr] \bigl[\!\bigl[ e^{2\pi i \tau_1/\ell}, e^{2\pi i \tau_2/\ell} \bigr]\!\bigr].
  \end{displaymath}
  Consequently, the Fourier expansions of $\mf{g}_\ell$ and $\mf{f}_{\ell,k,i}$ also
  belong to this ring, so the Fourier coefficients of $\mf{g}_\ell$ and
  $\mf{f}_{\ell,k,i}$ belong to~$\Z[e^{2\pi i/\ell}]$. In particular, they are
  algebraic integers.

  Finally, we show that the Fourier coefficients of~$\mf{g}_\ell$ are rational. This
  will conclude the proof as $\mf{f}_{\ell,k,i}/\mf{g}_\ell$, being a coefficient of the
  modular equations, is defined over~$\Q$.  Let~$\mf{h}$ be an integral modular form
  as above, and let~$(\Z/\ell\Z)^\times$ act as the Galois group of
  $\Q[e^{2\pi i/\ell}]$, i.e.~each $\lambda\in (\Z/\ell\Z)^\times$ acts as
  $e^{2\pi i/\ell}\mapsto e^{2\pi i \lambda/\ell}$. We extend this action to the
  above power series ring by letting~$(\Z/\ell\Z)^\times$ act on each
  coefficient. Further, we define an action of $(\Z/\ell\Z)^\times$ on~$C_\ell$
  as follows:
  \begin{displaymath}
    \lambda \cdot \gamma = \mat{a}{\lambda b \mod \ell}{0}{d} \in C_\ell
    \quad \text{if } \lambda\in (\Z/\ell\Z)^\times
     \text{ and }  \gamma = \mat{a}{b}{0}{d}.
  \end{displaymath}
  Then we observe that the map $\gamma\mapsto \mf{h}(\gamma\tau)$ is equivariant for
  these actions of $(\Z/\ell\Z)^\times$. As a consequence, the Fourier
  expansion of $\mf{g}_\ell$ is Galois-invariant, so is defined over~$\Q$.
\end{proof}

Let~$Q_{\ell}$ be the polynomial defined in \cref{prop:mf-poly} applied to
$\mf{f} = \mf{g}_\ell$.

\begin{prop}
  \label{prop:Ql}
  The polynomial~$Q_{\ell}\in \Z[J_1,J_2,J_3]$ is a denominator of the
  Siegel modular equations of level~$\ell$.
\end{prop}

\begin{proof}
  For each $1\leq k\leq 3$ and $0\leq m\leq \ell^3+\ell^2+\ell+1$, the
  coefficient of~$X^m$ in the rational
  fraction~$Q_{\ell} \Psi_{\ell,k}\in \Q(J_1,J_2,J_3)[X]$ is the
  polynomial~$Q_{\mf{f}_{\ell,k,m}}$ from \cref{prop:mf-poly}.
\end{proof}

For~$1\leq k\leq 3$, we denote the
polynomial~$Q_\ell \Psi_{\ell,k} \in \Z[J_1,J_2,J_3,X]$ by~$P_{\ell,k}$.

\subsection{Invariants on Hilbert moduli spaces}
\label{subsec:hilbert-invariants}

Fix a real quadratic field~$F$ of discriminant~$\Delta_F$, and let~$\Z_F$ be
its ring of integers. We fix a real embedding of~$F$, and denote the other
embedding by $x\mapsto \conj{x}$.  Then~$\GL_2(F)$ embeds in~$\GL_2(\R)^2$ via
the two real embeddings of~$F$. Via this embedding, $\GL_2(F)$ acts
on~$\Half_1^2$, where~$\Half_1\subset \C$ denotes the usual upper half
plane. We denote this action by~$(\gamma,t)\mapsto \gamma t$. For all
$\gamma\in \GL_2(F)$, all $\lambda\in F$ and every $t = (t_1,t_2)\in\Half_1^2$,
we also write
\begin{displaymath}
  \lambda t = (\lambda t_1, \conj{\lambda} t_2)
  \quad\text{and}\quad
  \gamma^*t = (ct_1+d)(\conj{c}t_2+\conj{d}) \quad\text{if } \gamma=\mat{a}{b}{c}{d}.
\end{displaymath}

Let~$\Z_F^\dual = (1/\sqrt{\Delta_F})\,\Z_F$ be the dual of~$\Z_F$ for the
trace form, and define the group
\begin{displaymath}
  \Gamma_F(1)
  = \left\{\mat{a}{b}{c}{d}\in\SL_2(F)\st
    a,d\in \Z_F,\,b\in \Z_F^\dual,\,c\in (\Z_F^\dual)^{-1}\right\}.
\end{displaymath}
The quotient $\Gamma_F(1)\backslash\Half_1^2$ is a coarse moduli space for
p.p.~abelian surfaces over~$\C$ with RM
by~$\Z_F$~\cite[§9.2]{birkenhakeComplexAbelianVarieties2004}. We denote the
p.p.~abelian surface with RM by~$\Z_F$ attached to $t\in \Half_1^2$ by~$A_F(t)$. The
involution given by
\begin{displaymath}
  \sigma_F\from (t_1,t_2)\mapsto(t_2,t_1)
\end{displaymath}
exchanges the RM embedding with its Galois conjugate, so we find an action of
the semidirect product $\Gamma_F(1)\rtimes\langle\sigma_F\rangle$
on~$\Half_1^2$. Both~$\Gamma_F(1)\backslash\Half_1^2$ and its quotient
by~$\sigma_F$ have canonical algebraic models that are defined
over~$\Q$~\cite[Chap.~X, Rem.~4.1]{vandergeerHilbertModularSurfaces1988}.

For a subring~$R\subset\C$, we denote by~$\MF(\Gamma_F(1), R)$ the
graded $R$-algebra of Hilbert modular forms of even weight defined
over~$R$ that are symmetric, i.e.~invariant under~$\sigma_F$. There is
no known general description of these graded algebras, although
$\MF(\Gamma_F(1),\Z)$ is known for discriminants~$5$
and~$8$~\cite{nagaokaRingIlbertModular1983}, $\MF(\Gamma_F(1),\Q)$
is known for the additional discriminants~$12, 13, 17, 24, 29, 37$
(see~\cite{williamsRingsIlbertModular2020} and the references
therein), and in general they are amenable to computation for a
fixed~$F$~\cite{dembeleExplicitMethodsHilbert2013}.

One way of defining coordinates on Hilbert moduli spaces consists in
pulling back invariants from the Siegel moduli space by the forgetful
map. Let $(e_1,e_2)$ be the $\Z$-basis of~$\Z_F$; for the sake of
fixing definitions, we take $(e_1,e_2) = (1,\tfrac12\sqrt{\Delta_F})$
if~$\Delta_F$ is even, and~$(1,\tfrac12 + \tfrac12\sqrt{\Delta_F})$
if~$\Delta_F$ is odd. Write
\begin{displaymath}
  M_F = \mat{e_1}{e_2}{\wconj{e}_1}{\wconj{e}_2}\in \GL_2(\R).
\end{displaymath}
Then the \emph{Hilbert
  embedding}~\cite[p.\,209]{vandergeerHilbertModularSurfaces1988} is
the map
\begin{equation}
  \label{eq:Hilb}
  \begin{array}{cccc}
    \mathrm{Hilb}_F \from & \Half_1^2& \to &\Half_2 \\[2mm]
    & (t_1,t_2) & \mapsto& M_F^t \mat{t_1}{0}{0}{t_2} M_F.
  \end{array}
\end{equation}
It is equivariant for the actions of~$\GL_2(F)$ and~$\GSp_4(\Q)$ under
the following map, also denoted by~$\mathrm{Hilb}_F$:
\begin{equation}
  \label{eq:Hilb-mat}
  \mat{a}{b}{c}{d}\mapsto \mat{M_F^t}{0}{0}{M_F^{-1}}
  \mat{a^*}{b^*}{c^*}{d^*} \mat{M_F^{-t}}{0}{0}{M_F}
\end{equation}
where $x^* = \tmat{x}{0}{0}{\conj{x}}$ for~$x\in F$. Further, the action
of~$\sigma_F$ on~$\Half_1^2$ corresponds via~$\mathrm{Hilb}_F$ to the action of
a certain involution~$\mathrm{Hilb}_F(\sigma_F)\in \Sp_4(\Z)$.

One can check that~$\mathrm{Hilb}_F$ induces a map
$\conj{\mathrm{Hilb}}_F\from (\Gamma_F(1)\rtimes\sigma_F)\backslash \Half_1^2\to
\Gamma(1)\backslash\Half_2$; in the modular interpretation, $\conj{\mathrm{Hilb}}_F$
forgets the RM structure. Generically, the map~$\conj{\mathrm{Hilb}}_F$ is
injective. Therefore, the pullbacks of the Igusa invariants~$\mf{j}_1,\mf{j}_2,\mf{j}_3$
under~$\conj{\mathrm{Hilb}}_F$ can be used to write down modular equations of Hilbert
type symmetrized under~$\sigma_F$.

When the structure of $\MF(\Gamma_F(1),\Z)$ is known, it is better to design
other coordinates in terms of a generating set of modular forms. When
$F = \Q(\sqrt{5})$, the ring $\MF(\Gamma_F(1),\Z)$ is generated by four
elements $\mf{G}_2,\mf{F}_6,\mf{F}_{10}$, and~$\mf{F}_{12}= \tfrac{1}{4}(\mf{F}_6^2 -\mf{G}_2\mf{F}_{10})$, whose
weights appear as subscripts~\cite{nagaokaRingIlbertModular1983}.
Following~\cite{milioModularPolynomialsIlbert2020}, we define the Gundlach
invariants $\mf{g}_1, \mf{g}_2$ as
\begin{equation}
  \label{eq:gundlach}
  \mf{g}_1 = \dfrac{\mf{G}_2^5}{\mf{F}_{10}} \quad\text{and}\quad
  \mf{g}_2 = \dfrac{\mf{G}_2^2\mf{F}_6}{\mf{F}_{10}}.
\end{equation}
The expressions of the pullbacks of~$\mf{h}_4,\ldots,\mf{h}_{12}$ by the Hilbert
embedding~$\mathrm{Hilb}_F$ in terms of~$\mf{G}_2,\mf{F}_6,\mf{F}_{10}$ can be found
in~\cite{resnikoffGradedRingIlbert1974}. In particular,
$\mathrm{Hilb}_F^*(\mf{h}_{10}) = 2^{12} \mf{F}_{10}$. As a byproduct, Gundlach and Igusa
invariants are related by explicit polynomial
formulas~\cite[Thm.~1]{resnikoffGradedRingIlbert1974},~\cite[Prop.~4.5]{lauterComputingGenusCurves2011}.
From the description of~$\MF(\Gamma_F(1),\Z)$, we immediately obtain the
following analogue of \cref{prop:mf-poly}.

\begin{prop}
  \label{prop:mf-poly-hilbert}
  Let $F = \Q(\sqrt{5})$. For every $\mf{f}\in\MF(\Gamma_F(1),\Z)$ of even
  weight~$w\in \Z_{\geq 0}$, we have
  $2^{\floor{w/3}}\mf{f}\in\Z[\mf{G}_2,\mf{G}_6,\mf{F}_{10}]$. Further
  let~$m\geq 0$ and~$0\leq n\leq 4$ be the unique integers such that
  $4\floor{w/6} + w = 10m+2n$. Then there is a unique
  polynomial~$Q_{\mf{f}}\in \Z[G_1,G_2]$ (where $G_1,G_2$ are variables) such
  that the following equality of Hilbert modular functions holds:
  \begin{displaymath}
    Q_{\mf{f}}(\mf{g}_1,\mf{g}_2) = 2^{\floor{w/3}} \dfrac{\mf{G}_2^{2\floor{w/6}}}{\mf{F}_{10}^m \mf{G}_2^n} \mf{f}.
  \end{displaymath}
  The total degree of~$Q_{\mf{f}}$ is bounded above by~$\floor{w/6}$.
\end{prop}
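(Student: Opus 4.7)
The plan is to mirror the proof of~\cref{prop:mf-poly}. First, applying~\cref{lem:hilbmfgens} yields a polynomial $F := 2^{\floor{w/3}} f \in \Z[G_2, F_6, F_{10}]$, homogeneous of weight~$w$ in the grading where $G_2, F_6, F_{10}$ carry weights $2, 6, 10$. Next, using the defining relations $G_2^5 = g_1 F_{10}$ and $G_2^2 F_6 = g_2 F_{10}$, I would rewrite the quotient $G_2^{2\floor{w/6}} F / (F_{10}^a G_2^b)$ as a polynomial in $g_1, g_2$ by converting one monomial at a time.

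For each monomial $G_2^i F_6^j F_{10}^k$ appearing in~$F$, where $2i + 6j + 10k = w$, the corresponding contribution should be $g_1^\alpha g_2^\beta$ with $\beta = j$ and $\alpha = a - j - k$, since $g_1^\alpha g_2^\beta = G_2^{5\alpha+2\beta} F_6^\beta / F_{10}^{\alpha+\beta}$. Matching $G_2$-exponents on both sides yields exactly the weight relation $10a + 2b = 4\floor{w/6} + w$. Writing $w = 6\floor{w/6} + r$ with $r \in \{0,2,4\}$ and requiring $a, b \in \Z$ with $0 \leq b \leq 4$ actually forces $a = \floor{w/6}$ and $b = r/2$, so $(a,b)$ is uniquely determined by~$w$.

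The step I expect to require the most care is verifying $\alpha \geq 0$ for every monomial of~$F$, i.e.,~$j + k \leq \floor{w/6}$ whenever $2i + 6j + 10k = w$ with $i, j, k \geq 0$. This should follow from $6j + 10k \leq w$, giving $j + k \leq j + 5k/3 \leq w/6$, and hence $j + k \leq \floor{w/6}$ since $j+k$ is an integer. The same estimate produces $\alpha + \beta = a - k \leq \floor{w/6}$, which is the claimed total degree bound. Uniqueness of~$Q_f$ would follow from the algebraic independence of~$g_1, g_2$ over~$\Q$, which holds because $(g_1, g_2)$ realizes a birational map from the $2$-dimensional moduli surface $\Gamma_F(1)\backslash \Half_1^2$ to~$\Proj^2$; equivalently, one can deduce it from the polynomial relations between Gundlach and Igusa invariants recalled just before the statement together with the algebraic independence of $j_1, j_2, j_3$.
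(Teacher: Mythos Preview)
Your proof is correct and follows exactly the approach the paper intends: the paper gives no separate proof for this proposition, stating only that it is the Hilbert analogue of \cref{prop:mf-poly}, and your argument faithfully mirrors that proof (using the relations $G_2^5 = g_1 F_{10}$ and $G_2^2 F_6 = g_2 F_{10}$ in place of the Igusa relations). If anything, your write-up is more explicit than the paper's, since you work out the monomial conversion and the degree bound directly rather than deferring to \cite[Lem.~4.7]{kieffer_DegreeHeightEstimates2022}; your observation that in fact $a=\floor{w/6}$ and $b\in\{0,1,2\}$ is a nice sharpening of the stated range $0\le b\le 4$.
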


If~$F = \Q(\sqrt{8})$, then~$\MF(\Gamma_F(1),\Z)$ is generated by three
algebraically independent modular forms
$\mf{G}_2, \mf{F}_4, \mf{F}_6$~\cite{nagaokaRingIlbertModular1983}. In this case, we the
Gundlach invariants are $\mf{g}_1 = \mf{G}_2^2/\mf{F}_4$ and $\mf{g}_2 = \mf{G}_2 \mf{F}_6/\mf{F}_4^2$. If
$\mf{f}\in \MF(\Gamma_F(1),\Z)$ is of weight~$w$, and $m\geq 0$ and $0\leq n\leq 1$
are the unique integers such that $\floor{w/6} = 2m + n$, there is a unique
polynomial $Q_{\mf{f}}\in \Z[J_1,J_2]$ such that
\begin{displaymath}
  Q_{\mf{f}}(\mf{g}_1,\mf{g}_2) = \frac{\mf{G}_2^{\floor{w/6}}}{\mf{F}_4^m\mf{G}_2^n} \mf{f}.
\end{displaymath}
Thus, the case $F = \Q(\sqrt{8})$ is similar to, or even simpler than,
$F = \Q(\sqrt{5})$. We focus on the latter case in the rest of the paper for brevity.

\subsection{Hilbert modular equations}
\label{subsec:hilbert-modeq}

We fix~$F = \Q(\sqrt{5})$ in the rest of this section, and consider modular
equations in Gundlach invariants.  Let~$\ell$ be a prime not
dividing~$\Delta_F$ that splits in~$F$ in two principal ideals generated by
totally positive elements $\beta, \conj{\beta}\in\Z_F$. We have the following
analogue of \cref{lem:siegel-representatives}; we omit its proof.

\begin{lem}
  \label{lem:hilbert-representatives}
  For every~$t\in \Half_1^2$, the p.p.~abelian surfaces that are
  $\beta$-isogenous to $A_F(t)$ are the surfaces $A_F(\gamma t)$,
  where~$\gamma$ runs through the $\ell+1$ elements of the coset space
  \begin{displaymath}
    \Gamma_F(1) \backslash \Gamma_F(1) \mat{1}{0}{0}{\beta}\Gamma_F(1).
  \end{displaymath}
\end{lem}

We can similarly write down a set of representatives whose lower left entry is
zero. It is the set $C_\beta$ consisting of the following matrices for
$x\in \{0,\ldots,\ell-1\}$:
\begin{displaymath}
  \mat{\beta}{0}{0}{1} \quad\text{and}\quad \mat{1}{x/\sqrt{\Delta_F}}{0}{\beta}.
\end{displaymath}
We let~$\smash{C_\beta^{\mathrm{sym}}}$ be the (formal) disjoint union of $C_\beta$ and
$C_\beta\,\sigma_F$. We can then make sense of $\gamma t$ for each
$\gamma\in C_\beta^{\mathrm{sym}}$ and $t\in \Half_1^2$: if $\gamma\in C_\beta\,\sigma_F$,
we act first by $\sigma_F$, then by the element of $C_\beta$.

The Hilbert modular equations of level~$\beta$ in Gundlach
invariants~\cite{martindaleHilbertModularPolynomials2020,
  milioModularPolynomialsIlbert2020} are the two multivariate rational
fractions in 3 variables $G_1,G_2,X$ denoted by
\begin{displaymath}
  \Psi_{\beta,k} \in \Q(G_1,G_2)[X] \quad \text{for } 1\leq k\leq 2
\end{displaymath}
such that for every $t\in \Half_1^2$ where everything is well defined, we have
\begin{equation}
  \label{eq:modeq-hilbert}
  \begin{aligned}
    \Psi_{\beta,1}\paren[\big]{\mf{g}_1(t),\mf{g}_2(t),X}
    &= \prod_{\gamma\in C_\beta^{\mathrm{sym}}} \bigl(X - \mf{g}_1(\gamma t)\bigr),\\
    \Psi_{\beta,2}\paren[\big]{\mf{g}_1(t),\mf{g}_2(t),X}
    &= \sum_{\gamma\in C_\beta^{\mathrm{sym}}} \mf{g}_2(\gamma t)
      \prod_{\gamma'\in C_\beta^{\mathrm{sym}}\backslash\{\gamma\}} \bigl(X - \mf{g}_1(\gamma' t)\bigr).
  \end{aligned}
\end{equation}
The formulas in the general case of Hilbert modular equations in Igusa
invariants are similar: we refer to~\cite[Lemma
3.9]{milioModularPolynomialsIlbert2020}.

The degrees of the polynomials~$\Psi_{\beta,k}$ in~$X$ are at most $2\ell+2$,
and their total degrees in $G_1,G_2$ are at most $10(\ell+1)/3$ by
\cite[Prop.~4.11]{kiefferDegreeHeightEstimates2022}. The height of their
coefficients is~$O(\ell\log\ell)$ by
\cite[Thm.~1.1]{kiefferDegreeHeightEstimates2022}.

\begin{rem}
  \label{rem:nonsymmetric}
  Considering non-symmetric coordinates on the Hilbert surface instead of
  Gundlach invariants would allow us to further divide the degrees and heights
  of modular equations by an approximate factor of~2 as we would only consider
  $C_\beta$, not~$C_\beta^{\mathrm{sym}}$. However, this would also introduce further
  technicalities because the relation with theta constants and Igusa invariants
  will not be as direct. We leave this improvement to future work.
\end{rem}

\subsection{Denominators of Hilbert modular equations}
\label{subsec:hilbert-denom}
As above, we call $Q_\beta\in \Z[J_1,J_2]$ a \emph{denominator} of the
Hilbert modular equations~$\Psi_{\beta,k}$ if for each
$1\leq k\leq 2$, we have
\begin{displaymath}
  Q_\beta\Psi_{\beta,k} \in \Z[J_1,J_2,X].
\end{displaymath}
For every $t\in \Half_1^2$, we write
\begin{equation}
  \label{eq:gbeta}
  \mf{g}_\beta(t) = \prod_{\gamma\in C_\beta^{\mathrm{sym}}} 2^{-12} \mf{h}_{10}\paren[\big]{\mathrm{Hilb}_F(\gamma t)},
\end{equation}
and let~$w_\beta = 20(\ell + 1)$.

\begin{prop}
  \label{prop:gbeta-ZZ}
  The holomorphic function $\mf{g}_\beta$ is a symmetric and integral Hilbert
  modular form of weight $w_\beta$ and level~$\Gamma_F(1)$
  where~$F = \Q(\sqrt{5})$. The polynomial $Q_{\beta}$ obtained from
  \cref{prop:mf-poly-hilbert} with~$\mf{f} = \mf{g}_\beta$ is a denominator of the
  Hilbert modular equations.
\end{prop}

\begin{proof}
  By~\cite[Thm.~1]{resnikoffGradedRingIlbert1974}, we have
  $2^{-12}\mf{h}_{10}\paren[\big]{\mathrm{Hilb}_F(\gamma t)} = -\mf{F}_{10}(t)$ for each
  $t\in \Half_1^2$, so the polynomials
  $\mf{g}_\beta(t)\Psi_{\beta,k}\paren[\big]{\mf{g}_1(t),\mf{g}_2(t), X}$ for $k\in \{1,2\}$ have
  holomorphic coefficients. One can prove that $\mf{g}_\beta$ is a Hilbert modular
  form for~$\Gamma_F(1)$ of the correct weight by applying the cocycle formula
  as in the Siegel case. Moreover, $\mf{g}_\beta$ is plainly invariant
  under~$\sigma_F$.

  We now show that $\mf{g}_\beta$, as well as any coefficient of
  $\mf{g}_\beta(t)\Psi_{\beta,k}\paren[\big]{\mf{g}_1(t),\mf{g}_2(t)}$ for $k\in \{1,2\}$, is
  defined over~$\Z$. As a Hilbert modular form of level~$\Gamma_F(1)$,
  $\mf{g}_\beta$ admits a Fourier expansion of the form
  \begin{displaymath}
    \mf{g}_\beta(t) = \sum_{n \in (\Z_F^\dual)^{-1}} a_n e^{2\pi i (n t_1 + \conj{n} t_2)},
  \end{displaymath}
  We want to show that the coefficients~$a_n$ are integral. This is
  sufficient because the Hilbert surface in question has only one cusp, namely
  the cusp at infinity~\cite[§I.1]{vandergeerHilbertModularSurfaces1988}.

  In order to show that the coefficients~$a_n$ are algebraic integers, we
  proceed as in \cref{prop:gl-ZZ}.  Consider a function of the form
  $\mf{h}(\gamma t)$, where~$\mf{h}$ is a Hilbert modular form defined
  over~$\Z$ and~$\gamma\in C_\beta$. Using the description of $C_\beta$ and the
  fact that $\beta\overline{\beta}=\ell$, we observe that
  $t\mapsto \mf{h}(\gamma t)$ admits a Fourier expansion of the form
  \begin{displaymath}
    \mf{h}(\gamma t) = \sum_{n\in (\Z_F^\dual)^{-1}} b_n e^{2\pi  i (n t_1 + \conj{n} t_2)/\ell}
  \end{displaymath}
  with~$b_n \in \Z[e^{2\pi i /\ell}]$ for all~$n$. This shows
  that~$a_n\in \Z[e^{2\pi i/\ell}]$ for all~$n$.

  Finally, we argue that the coefficients~$a_n$ are actually integers: if we
  consider the Galois action of~$(\Z/\ell\Z)^\times$ on the coefficients and
  let $\lambda\in (\Z/\ell\Z)^\times$ act on $C_\beta$ by
  \begin{displaymath}
    \lambda \mat{\beta}{0}{0}{1} = \mat{\beta}{0}{0}{1} \text{ and }
    \lambda \mat{1}{a/\sqrt{\Delta_F}}{0}{\beta} = \mat{1}{(\lambda a \text{ mod } \ell)/\sqrt{\Delta_F}}{0}{\beta},
  \end{displaymath}
  then $\gamma\mapsto \mf{h}(\gamma t)$ is equivariant, so the expansion of $\mf{g}_\beta$ is invariant.
\end{proof}

We denote the polynomial $Q_\beta \Psi_{\beta,k}$ by $P_{\beta,k}$ for
$1\leq k\leq 2$. In the case $F = \Q(\sqrt{8})$, we can replace $2^{-12}\mf{h}_{10}$
by $2^{-24} \mf{h}_{10}^2$ to define the denominator $\mf{g}_\beta$, as
\cite[Thm.~A.14]{milioModularPolynomialsIlbert2020} shows.

\section{Structure of the evaluation algorithms}
\label{sec:structure}

In this section, we describe the evaluation algorithms for modular equations of
Siegel and Hilbert type in detail. The description involves certain fundamental
algorithms on period matrices, for instance the AGM method, that are further
discussed in~§\ref{sec:agm}.

\subsection{Computational model}
\label{subsec:model}

All our numerical computations are performed in interval arithmetic. We now
review some key notions and results that we use throughout.

Given $z\in \C$ and~$N\in \Z_{\geq 0}$, we define an \emph{approximation of~$z$
  to precision~$N$} to be a complex ball centered at a dyadic point of radius
at most~$2^{-N}$ containing~$z$. An approximation of a polynomial (or a vector,
or a matrix) to precision~$N$ is by definition an approximation to
precision~$N$ of each of its coefficients. When we say that an operation is
performed \emph{at working precision~$N$}, this means that we truncate its
output to precision~$N$, even if we might be able to compute it to a higher
precision.

We say that the \emph{precision loss} in a given algorithm is (at most)~$B$
bits if the following property holds: if the input is given to
precision~$N\geq B$, then the algorithm succeeds and its output is specified to
precision at least~$\floor{N-B}$. We sometimes allow~$B$ to depend on~$N$, and often
use the Landau $O$ notation.  For example, if we say that \emph{given an input
  $x$ to precision~$N$, Algorithm~$\mathcal{A}$ computes $f(x)$ with a
  precision loss of $O(\log N)$ bits}, we really mean: \emph{there exist an
  absolute constant $K$ such that, given any $N\in \Z_{\geq 0}$ such that
  $N\geq K\log N$ and any~$x$ specified to precision~$N$,
  Algorithm~$\mathcal{A}$ succeeds and outputs~$f(x)$ to precision at
  least~$\floor{N - K\log N}$.} The precision losses in an algorithm, like its
complexity, can then be estimated by summing up the precision losses in each
individual step.

Our whole analysis is based on estimations on the precision losses and
complexities of certain elementary operations that we summarize in the next two
lemmas. Recall the notation introduced in~§\ref{subsec:intro-notation}.

\begin{lem}
  \label{lem:elementary-sqrt}
  Given~$z\in \C^\times$ to precision~$N\in \Z_{\geq 0}$, one can compute~$1/z$ and a square
  root of~$z$ in time $O(\M(N))$ with a precision loss
  of~$-2\log_2\abs{z} + O(1)$ and~$-\tfrac 12\log_2\abs{z} + O(1)$ bits
  respectively.
\end{lem}

\begin{lem}
  \label{lem:elementary}
  Let~$P_1,P_2\in \C[X]$ and~$N,N_1,N_2\in\Z_{\geq 0}$. Assume that $P_1,P_2$ and
  their approximations all have degree at most~$d\in \Z_{\geq 0}$. Then:
  \begin{enumerate}
  \item Given approximations of~$P_1,P_2$ to precision~$N$, the sum $P_1+P_2$
    can be computed in time
    $O\paren[\big]{(d+1)(N + \log\max\{1,\abs{P_1},\abs{P_2})}$ with a
    precision loss of~$O(1)$ bits.
  \item \label{it:elem-poly} Given approximations of~$P_k$ to precision~$N_k$
    for $k = 1,2$, the product $P_1P_2$ can be computed in time
    $O\paren[\big]{\M \paren[\big]{(d+1) \log(d+1)\max\{N_1 + \logp\abs{P_1}, N_2 +
        \logp\abs{P_2}\}}}$ to precision
    $\min\{N_1 - \logpt \abs{P_2}, N_2 - \logpt \abs{P_1}\} - \log_2(1 + d) -
    O(1)$.
  \end{enumerate}
\end{lem}

Note that in \cref{lem:elementary-sqrt}, the assumption on~$N$ when
computing~$1/z$ ensures that the input ball does not contain zero, and that
\cref{lem:elementary} includes the case of the sum and product of two complex
numbers ($d=0$).

\begin{proof}
  Let us only prove \cref{lem:elementary} as an illustration. Write
  \begin{displaymath}
    P_1 = \sum_{k=0}^d (x_k + \delta_k) X^k \quad\text{and}\quad
    P_2 = \sum_{k=0}^d (y_k + \eps_k) X^k
  \end{displaymath}
  where the $x_k,y_k$ for $0\leq k\leq d$ are known dyadic numbers (the
  midpoints of each coefficient of $P_1$ and $P_2$ respectively), and
  $\delta_k,\eps_k$ are unknown but satisfy $\abs{\delta_k} \leq 2^{-N_1}$
  and~$\abs{\eps_k}\leq 2^{-N_2}$ for all~$k$. Up to decreasing~$N_1$
  and~$N_2$ by~$O(1)$, we can assume that the coefficients~$x_k$ (resp.~$y_k$)
  are (Gaussian) integers divided by~$2^{-N_1-4}$ (resp.~$2^{-N_2-4}$). Let~$Q$
  be the exact product of the midpoint polynomials. After expanding the
  product~$P_1 P_2$, we obtain
  \begin{align*}
    \abs{P_1 P_2 - Q} &\leq (d + 1) \paren[\big]{2^{-N_1}\abs{P_2} + 2^{-N_2}\abs{P_1} + 2^{-N_1N_2}}\\
    &\leq 4 (d+1) \max\bigl\{2^{-N_1}, 2^{-N_1}\abs{P_1}, 2^{-N_2}, 2^{-N_2}\abs{P_2}\bigr\}.
  \end{align*}
  Thus~$Q$ is an approximation of~$P_1P_2$ to the required precision.

  After rescaling and taking real and imaginary parts, computing~$Q$ amounts to
  computing $O(1)$ product of polynomials in~$\Z[X]$ of degree~$d$ and whose
  coefficients have size
  $O\paren[\big]{\max\{N_1 + \logp\abs{P_1}, N_2 + \logp\abs{P_2}\}}$. Thus~$Q$
  can be computed within the required time bound using Kronecker substitutions.
\end{proof}

Any implementation of our algorithms using an interval arithmetic package in
which \cref{lem:elementary-sqrt,lem:elementary} hold will satisfy the
complexity bounds stated in the main theorems. In our implementation, we rely
on the Arb library~\cite{johanssonArbEfficientArbitraryprecision2017}. This
implementation has some additional features, e.g.~approximations of the real
and imaginary parts of~$z$ are stored separately as real intervals, but this
makes no difference in our theoretical analysis.

\subsection{Input, output, and certification}
\label{subsec:io}

The input to the evaluation algorithm consists in a tuple of invariants defined
over a number field~$L$. We consider two possible situations:
\begin{enumerate}
\item \label{it:input-1} In the first description, the number field
  is~$L = \Q(\alpha)$ where~$\alpha$ is a root of a monic polynomial
  $P\in\Z[X]$, and the input invariants belong to~$\Z[\alpha]$. This situation
  arises for instance when lifting from a finite field; not much is known
  about~$L$ itself.
\item \label{it:input-2} In the second description, we assume that a suitable
  $\Z$-basis of the ring of integers~$\Z_L$ is known (in fact a reduced basis
  in the sense of lattice reduction), and we assume that the input invariants
  are presented as quotients of elements in~$\Z_L$.
\end{enumerate}

The complexity estimates we will provide in~§\ref{sec:proofs} in these two
situations are different. In case~\ref{it:input-1}, we express them in terms of
the degree $d_L$ of~$L$ and bounds on the coefficients of~$P$ and the input
invariants, and deduce complexity estimates for the evaluation of modular
equations over finite fields. In case~\ref{it:input-2}, we express them in
terms of $d_L$, the discriminant~$\Delta_L$ of~$L$, and the absolute
logarithmic height of the input invariants, as defined
in~\cite[§B.2]{hindryDiophantineGeometry2000}. Of course, the distinction is
moot when~$L=\Q$, as in \cref{thm:intro}.

Recall the notations~$Q_\ell$ and~$P_{\ell,k}$ from~§\ref{subsec:siegel-denom}
(we consider the Siegel case here, as the Hilbert case is completely
analogous.) We have seen that they are polynomials in~$J_1,J_2,J_3$ with
integral coefficients and total degree at most
$10(\ell^3+\ell^2+\ell+1)/3$. Consequently, we immediately have:

\begin{lem}
  \label{lem:adapted-basis}
  \begin{enumerate}
  \item In case~\ref{it:input-1}, let $j_1,j_2,j_3\in \Z[\alpha]$. Then
    $Q_\ell(j_1,j_2,j_3)$ and all the coefficients of
    $P_{\ell,k}(j_1,j_2,j_3,X)$ for $1\leq k\leq 3$ lie in~$\Z[\alpha]$.
  \item In case~\ref{it:input-2}, let~$j_1,j_2,j_3\in L$, and let~$D\in \Z_L$
    be a common denominator such that $Dj_k\in \Z_L$ for each $1\leq k\leq 3$.
    Let~$D' = D^{\floor{10(\ell^3+\ell^2+\ell+1)/3}}$. Then
    $D' Q_\ell(j_1,j_2,j_3)$ and all the coefficients of
    $D' P_{\ell,k}(j_1,j_2,j_3,X)$ for $1\leq k\leq 3$ lie in~$\Z_L$.
  \end{enumerate}
\end{lem}

In the Hilbert case for $F = \Q(\sqrt{5})$, we would replace
$10(\ell^3+\ell^2+\ell+1)/3$ by $10(\ell+1)/3$.

In case~\ref{it:input-1}, let~$\mathcal{B}_L$ denote the power basis
$(1,\ldots,\alpha^{d_L-1})$ of~$\Z[\alpha]$; in case~\ref{it:input-2},
let~$\mathcal{B}_L$ be any $\Z$-basis of~$\Z_L$. Then $\mathcal{B}_L$ is a
$\Q$-basis of~$L$ in which the quantities of \cref{lem:adapted-basis} have
integral coordinates.  The skeleton of the evaluation algorithm outlined
in~§\ref{subsec:intro-overview} is then as follows.

\begin{algo}
  \label{algo:skeleton}
  \algoinput{Igusa invariants $j_1,j_2,j_3\in L$ which either lie in
    $\Z[\alpha]$ (case~\ref{it:input-1}) or which are integers in~$L$ divided
    by some~$D\in \Z_L$ (case~\ref{it:input-2}); the prime~$\ell$; a
    $\Q$-basis $\mathcal{B}_L$ of~$L$ as above.}
  \algooutput{$\Psi_{\ell,k}(j_1,j_2,j_3,X)\in L[X]$ for $1\leq k\leq 3$}.
  \begin{enumerate}
  \item Let $D' = 1$ (case~\ref{it:input-1}) or
    $D' = D^{\floor{10(\ell^3+\ell^2+\ell+1)/3}}$ (case~\ref{it:input-2}).
  \item Pick a starting precision $N\in \Z_{\geq 0}$.
  \item \label{step:skeleton-start} For each embedding~$\mu: L\to\C$,
    \begin{enumerate}
    \item \label{step:skeleton-embed} Compute $\mu(D')$ and $\mu(j_k)$ for
      $1\leq k\leq 3$ at working precision~$N$.
    \item \label{step:skeleton-eval} Compute
      $Q_\ell\paren[\big]{\mu(j_1),\mu(j_2),\mu(j_3)}$ and
      $P_{\ell,k}\paren[\big]{\mu(j_1),\mu(j_2),\mu(j_3), X}$ for
      $1\leq k\leq 3$ at working precision~$N$.
    \end{enumerate}
    In case of failure due to insufficient precision, double~$N$ and restart
    step~\ref{step:skeleton-start}.
  \item \label{step:skeleton-reconstruct} Recover $D' Q_\ell(j_1,j_2,j_3)$ and
    each coefficient of $D' P_{\ell,k}(j_1,j_2,j_3)$ for $1\leq k\leq 3$ as
    $\C$-linear combinations of the elements of~$\mathcal{B}_L$, viewing
    $L\otimes\C$ as a subspace of~$\C^{d_L}$ via the $d_L$ embeddings
    $\mu:L\embed\C$ at working precision~$N$.  In case of failure due to
    insufficient precision, double~$N$ and go back to
    step~\ref{step:skeleton-start}.
  \item \label{step:skeleton-reconstruct-ZZ} Replace each coefficient of the
    linear combination computed in step~\ref{step:skeleton-reconstruct}, seen
    as a complex ball, by the unique integers it contains. This yields
    $D' Q_\ell(j_1,j_2,j_3)$ and the coefficients of
    $D' P_{\ell,k}(j_1,j_2,j_3, X)$ for $1\leq k\leq 3$ as elements of~$L$.  If
    any of these balls contains several integers, double~$N$ and go back to
    step~\ref{step:skeleton-start}.
  \item Output
    $\displaystyle \Psi_{\ell,k}(j_1,j_2,j_3,X) = \frac{D'
      P_{\ell,k}(j_1,j_2,j_3,X)}{D' Q_\ell(j_1,j_2,j_3)}$ for $1\leq k\leq 3$.
  \end{enumerate}
\end{algo}

Of course, in step~\ref{step:skeleton-start}, it is enough to consider only one
embedding of~$L$ in each conjugate pair, and conjugate the results of
step~\ref{step:skeleton-eval} to obtain the other embedding.

Assuming that we have a provably correct numerical algorithm for
steps~\ref{step:skeleton-embed}, \ref{step:skeleton-eval}
and~\ref{step:skeleton-reconstruct}, \Cref{algo:skeleton} is also provably
correct by \cref{lem:adapted-basis}. To prove \cref{thm:intro}, we will
describe these algorithms and analyze their precision losses and complexity at
a given working precision~$N$. \Cref{algo:skeleton} terminates when~$N$ becomes
greater than these precision losses. In our theoretical analysis, big-$O$
estimate on these precision losses are sufficient; in practice, Arb keeps track
of error bounds throughout the algorithm, so it is easy to decide when to
stop. The rest of this section focuses on step~\ref{step:skeleton-eval}; for
steps~\ref{step:skeleton-embed} and~\ref{step:skeleton-reconstruct},
see~§\ref{sec:proofs}.

\begin{rem}
  \label{rem:tight-precision}
  In this paper, we always consider nonnegative absolute precisions
  (see~§\ref{subsec:model}), but Arb is capable of manipulating complex numbers
  with nonnegative relative precisions as well. In that case, it is also
  possible to attempt a first run through \cref{algo:skeleton} at a moderate,
  but probably insufficient precision (say~$N = 2000$ bits), observe the
  magnitude~$M$ of the error bounds we obtain in
  step~\ref{step:skeleton-reconstruct}, then increase~$N$ by $\log_2(M)$ plus a
  small margin. The algorithm is then likely to ``just'' succeed at the next
  iteration, leading to improved practical performances.
\end{rem}

In order to evaluate derivatives of modular equations with respect to one of
the invariants, we modify \cref{algo:skeleton} to use $Q^2$ as a denominator
instead, and replace step~\ref{step:skeleton-eval} by a numerical evaluation of
the partial derivatives. If~$Q$ is already a square (this happens for instance
in the Siegel case, as $\mf{g}_\ell$ is the square of a modular form with
integral coefficients), then~$Q^{3/2}$ is a valid denominator for these
derivatives.

\subsection{Numerical evaluation: the Siegel case}
\label{subsec:numerical-siegel}

In the Siegel case, step~\ref{step:skeleton-eval} of \cref{algo:skeleton}
relies on the formulas~\eqref{eq:siegelnum} to evaluate the numerator and
denominator of modular equations. To apply these formulas, we must compute a
period matrix~$\tau\in \Half_2$ whose Igusa invariants are
$\mu(j_1),\mu(j_2),\mu(j_3)$, then evaluate the modular forms $\mf{h}_k$ for
$k\in \{4,6,10,12\}$ at the matrices~$\gamma\tau\in \Half_2$
for~$\gamma\in C_\ell$.

Both computations crucially involve the usual fundamental domain~$\Fund_2$ for
the action of~$\Sp_4(\Z)$ on~$\Half_2$ defined
in~\cite[§1]{klingenIntroductoryLecturesIegel1990}, as well as certain
neighborhoods~$\Fund_2^\eps$ (for~$\eps>0$) of~$\Fund_2$, to be defined
below. Informally, period matrices~$\tau\in \Fund_2$ are computationally
convenient because the eigenvalues of~$\im(\tau)$ are bounded below, so
evaluating theta functions (hence modular forms) at~$\tau$ is fast.

\begin{defn}
  \label{def:fund}
  Fix $\eps\in \R_{\geq  0}$, and let
  \begin{displaymath}
    Y = \mat{y_1}{y_3}{y_3}{y_2}
  \end{displaymath}
  be a symmetric $2\times 2$ real matrix. Assume that $Y$ is positive
  definite. We say that $Y$ is \emph{$\eps$-Minkowski reduced} if
  \begin{displaymath}
    y_1 \leq (1+\eps) y_2 \quad\text{and}\quad
    -\eps y_1 \leq 2y_3 \leq (1+\eps)y_1.
  \end{displaymath}
  Let $\Gts\subset \Gamma(1)$ be the set of 19 matrices defining the boundary
  of~$\Fund_2$ as described
  in~\cite{gottschlingExpliziteBestimmungRandflaechen1959}; see
  also~\cite[§6.2]{strengComputingIgusaClass2014}. We define
  $\Fund_2^\eps\subset \Half_2$ as the set of all matrices $\tau\in\Half_2$
  such that
  \begin{enumerate}
  \item $\im(\tau)$ is $\eps$-Minkowski reduced,
  \item $\abs{\re(\tau)} \leq 1/2+\eps$,
  \item $\abs{\det(\sigma^*\tau)} \geq 1-\eps$ for every $\sigma\in\Gts$.
  \end{enumerate}
  The fundamental domain~$\Fund_2$ corresponds to the case
  $\eps=0$.
\end{defn}

In step~\ref{step:skeleton-eval} of \cref{algo:skeleton}, we proceed as
follows.

\begin{algo}
  \label{algo:numerical-siegel}
  \algoinput{The prime~$\ell$; a complex embedding~$\mu$ of~$L$; the Igusa
    invariants $\mu(j_1),\mu(j_2),\mu(j_3)$ to precision~$N$.
    Fix~$\eps = 2^{-10}$, say.}
  \algooutput{$Q_\ell\paren[\big]{\mu(j_1),\mu(j_2),\mu(j_3)}$
    and~$P_{\ell,k}\paren[\big]{\mu(j_1),\mu(j_2),\mu(j_3),X}$ for
    $1\leq k\leq 3$.}
  \begin{enumerate}
  \item \label{step:siegel-tau} Compute a period matrix~$\tau\in \Fund_2^\eps$
    whose Igusa invariants are $\mu(j_1)$, $\mu(j_2)$, $\mu(j_3)$.
  \item  For each~$\gamma\in C_\ell$,
    \begin{enumerate}
    \item \label{step:siegel-fd} Reduce~$\gamma\tau$ to~$\Fund_2^\eps$,
      i.e.~compute~$\eta_\gamma\in \Sp_4(\Z)$ such that
      $\eta_\gamma\gamma\tau \in \Fund_2^\eps$.
    \item \label{step:siegel-theta} Evaluate the
      squared theta constants~$\mf{\theta}_{a,b}^2(\eta_\gamma\gamma\tau)$ for
      $a,b\in \{0,1\}^2$. Deduce the values
      of~$\mf{h}_4,\mf{h}_6,\mf{h}_{10},\mf{h}_{12}$ at $\eta_\gamma\gamma\tau$
      using~\cite[eq.~(7.1)]{strengComputingIgusaClass2014}.
    \item \label{step:siegel-transf} Compute $\mf{h}_k(\gamma\tau)$ for
      $k\in \{4,6,10,12\}$ as
      $\det\paren[\big]{\eta_\gamma^*(\gamma\tau)}^{-k}\,
      \mf{h}_k(\eta_\gamma\gamma\tau)$.
    \end{enumerate}
  \item Evaluate~$\mf{h}_4,\ldots, \mf{h}_{12}$ at~$\tau$ via theta constants as in step~\ref{step:siegel-theta}.
  \item \label{step:siegel-prodtree} Evaluate $\mf{g}_\ell(\tau)$ and
    $\mf{g}_\ell(\tau) \Psi_{\ell,k}\paren[\big]{\mf{j}_1(\tau),\mf{j}_2(\tau),\mf{j}_3(\tau)}$ for $1\leq k\leq 3$
    using~\eqref{eq:gl},~\eqref{eq:siegelnum}.
  \item \label{step:siegel-lambda} Let~$w = 20(\ell^3+\ell^2+\ell+1)$ and
    evaluate
    $\lambda = 2^{\floor{7w/4}} 3^{\floor{w/4}} \mf{h}_4(\tau)^{\floor{w/6}- n}
    \mf{h}_{10}(\tau)^{-m}$ where~$m,n$ are as in \cref{prop:mf-poly}.
  \item \label{step:siegel-output} Output the results of
    step~\ref{step:siegel-prodtree} multiplied by~$\lambda$.
  \end{enumerate}
\end{algo}

\noindent
Let us now detail how computations are performed in each step.
\begin{itemize}
\item In step~\ref{step:siegel-tau}, we rely on the AGM
  method~\cite[Chap.~9]{dupontMoyenneArithmeticogeometriqueSuites2006}, which
  runs in quasi-linear time in~$N$. We describe this step and
  explain how to make it provably correct in~§\ref{sec:agm}.
\item In step~\ref{step:siegel-fd}, we adapt the usual reduction algorithm
  presented in~\cite[§6]{strengComputingIgusaClass2014}. A new contribution is
  that we analyze its complexity and precision losses in a completely numerical
  setting. We also discuss this in~§\ref{sec:agm}.
\item In step~\ref{step:siegel-theta}, we apply a quasi-linear time algorithm
  based on the AGM method and a Newton scheme to evaluate theta constants at a
  given point
  of~$\Fund_2^\eps$~\cite[Chap.~10]{dupontMoyenneArithmeticogeometriqueSuites2006}. This
  algorithm is shown to be certified and of uniform complexity on~$\Fund_2$
  in~\cite{kiefferCertifiedEwtonSchemes2022}. In the present work, we claim
  that \textbf{this algorithm is also quasi-linear and uniform
    on~$\Fund_2^\eps$ for~$\eps = 2^{-10}$}. This assertion could conceivably
  be checked by repeating the proofs
  in~\cite{kiefferCertifiedEwtonSchemes2022}. Alternatively, the author is
  currently working with Noam D.~Elkies on a new algorithm to evaluate theta
  functions which satisfies this claim. Yet another possibility would be to
  set~$\eps\simeq 2^{-N}$, so that a matrix in~$\Fund_2^\eps$ is always
  sufficiently close to a matrix in~$\Fund_2$
  where~\cite{kiefferCertifiedEwtonSchemes2022} applies out of the box.
\item Step~\ref{step:siegel-transf} is valid because the~$\mf{h}_k$ are
  modular forms for~$\Sp_4(\Z)$.
\item In step~\ref{step:siegel-prodtree}, we use product trees for efficiency
  reasons and to better manage precision losses. We postpone the detailed
  description of this step to~§\ref{sec:precision}.
\end{itemize}

\Cref{algo:numerical-siegel} can be generalized to also evaluate the
derivatives of modular equations of Siegel type with respect to the variables
$J_1,J_2,J_3$. If the $3\times 3$ matrix $\mf{dJ}(\tau)$ defined
in~\eqref{eq:dJ} is invertible, then we can compute
\begin{displaymath}
  \frac{\partial \Psi_{\ell,k}}{\partial J_n}
  \paren[\big]{\mu(j_1),\mu(j_2),\mu(j_3),X}
  \quad \text{for } 1\leq k, n\leq 3
\end{displaymath}
by modifying steps~\ref{step:siegel-theta}--\ref{step:siegel-prodtree}
of~\cref{algo:numerical-siegel} as follows.
\begin{itemize}
\item In step~\ref{step:siegel-theta}, we also compute the partial derivatives
  of theta constants with respect to the entries of~$\tau$
  using~\cite{kiefferCertifiedEwtonSchemes2022} (either using the fact that
  Newton iterations intrinsically compute derivatives, or using finite
  differences with an explicit upper bound on derivatives to the second
  order). We obtain the values of partial derivatives of the modular
  functions~$\mf{h}_k$ by differentiating the
  expressions~\cite[eq.~(7.1)]{strengComputingIgusaClass2014}.
\item In step~\ref{step:siegel-transf}, we differentiate the
  transformation formula. This yields the partial derivatives
  of~$\mf{j}_k(\gamma\tau)$ with respect to the entries of~$\tau$ for
  all~$1\leq k\leq 3$ and~$\gamma\in C_\ell$.
\item In step~\ref{step:siegel-prodtree}, we evaluate
  $\displaystyle\frac{\partial \Psi_{\ell,k}}{\partial \tau_n}(\tau,X)$ for
  $1\leq k,n\leq 3 $ by differentiating~\eqref{eq:siegelmodeq}.
 \item In addition, we evaluate derivatives of theta constants at~$\tau$ to
   compute~$\mf{dJ}(\tau)$. Then, we compute the inverse of this matrix
   numerically. If we cannot be certain that~$\det(\mf{dJ}(\tau))\neq 0$ at the
   current working precision, then the algorithm fails.
 \item Finally, we have
   \begin{displaymath}
     \paren[\bigg]{\frac{\partial \Psi_{\ell,k}}{\partial J_n}
       \paren[\big]{\mu(j_1),\mu(j_2),\mu(j_3),X}}_{1\leq k,n\leq 3} =  \paren[\bigg]{\frac{\partial \Psi_{\ell,k}}{\partial \tau_n}(\tau)}_{1\leq k,n\leq 3} \cdot  \mf{dJ}(\tau)^{-1}.
   \end{displaymath}
   We can then multiply this quantity by the correct denominator and continue
   as in step~\ref{step:skeleton-reconstruct} of \cref{algo:skeleton}.
\end{itemize}

With this strategy, evaluating the derivatives of modular equations is not much
more expensive than evaluating the modular equations themselves using
\cref{algo:numerical-siegel}: the cost and precision losses grow by a
(sizeable) constant factor. For this reason, evaluating derivatives of modular
equations will not be our main focus in the rest of the paper.

\subsection{Numerical evaluation: the Hilbert case}
\label{subsec:numerical-hilbert}

In the case of modular equations of Hilbert type of level~$\beta$ in Gundlach
invariants for~$F = \Q(\sqrt{5})$ or~$F = \Q(\sqrt{8})$, the numerical
evaluation algorithm (step~\ref{step:skeleton-eval} of \cref{algo:skeleton}) is
based on the formulas~\eqref{eq:modeq-hilbert} and~\eqref{eq:gbeta}. To apply
them, we must compute a period point~$t\in \Half_1^2$ associated with the given
pair of Gundlach invariants $\mu(g_1), \mu(g_2)$, and conversely, be able to
evaluate the Gundlach invariants at a given point of~$\Half_1^2$.

Since our knowledge of efficient numerical methods for Hilbert surfaces is less
advanced than in the Siegel case, we use the Siegel half-space~$\Half_2$
as much as possible, and transition to~$\Half_1^2$ by inverting the Hilbert
embedding~$\mathrm{Hilb}_F$ when necessary to enumerate~$\beta$-isogenous
abelian surfaces. However, inverting the Hilbert embedding is not a trivial
step. Even if~$\tau\in \Half_2$ is a period matrix of a p.p.~abelian
surface~$A$ with RM by~$\Z_F$ (say~$\tau\in\Fund_2^\eps$), there does not
always exist~$t\in \Half_1^2$ such that~$\tau = \mathrm{Hilb}_F(t)$: rather,
there exists~$t\in \Half_1^2$ and a matrix~$\eta\in \Sp_4(\Z)$ such that
$\eta\tau = \mathrm{Hilb}_F(t)$, as $\mathrm{Hilb_F}:\Gamma_F(1)\to\Sp_4(\Z)$
is not surjective. Inverting~$\mathrm{Hilb}_F$ then amounts to
computing~$\eta$.

In turn, computing~$\eta$ is essentially equivalent to computing the action
of~$\Z_F$ on the lattice~$\tau\Z^2+\Z^2$, as explained
in~\cite[§4]{birkenhakeHumbertSurfacesKummer2003}. More precisely, we consider
the endomorphism~$\xi = \sqrt{\Delta_F}$ or~$\xi = (1 + \sqrt{\Delta_F})/2$
of~$A(\tau) = \C^2/(\tau\Z^2+\Z^2)$ depending on whether~$\Delta_F = 0$
or~$\Delta_F = 1 \mod 4$. We can view~$\xi$ in two ways: either as a linear
map~$\C^2\to\C^2$ leaving~$\tau\Z^2 + \Z^2$ stable, represented by a
$2\times 2$ complex matrix, the \emph{analytic representation}~$\rho_A(\xi)$
of~$\xi$; or as an endomorphism of the lattice~$\tau\Z^2+\Z^2$, represented by
a $4\times 4$ integral matrix (in the basis obtained from the canonical basis
of each~$\Z^2$), the \emph{rational representation}~$\rho_R(\xi)$
of~$\xi$. These matrices satisfy the following
relation~\cite[eq.\,(6)]{birkenhakeHumbertSurfacesKummer2003}:
\begin{equation}
  \label{eq:endo}
  \rho_A(\xi) (\tau \ I_2) = (\tau \ I_2) \rho_R(\xi).
\end{equation}
Moreover, there exist~$a,b,c,d,e,n\in \Z$ such that
\begin{displaymath}
  \rho_R(\xi) =
  \begin{pmatrix}
    n & a & 0 & d\\
    -c & n + b & -d & 0 \\
    0 & e & n & -c \\
    -e & 0 & a & n + b
  \end{pmatrix}
  \quad\text{and}\quad
  \begin{cases}
    b^2 - 4ac - 4de = \Delta_F,\\
    2n + b = \Tr_{F/\Q}(\xi) \in \{0,1\}.
  \end{cases}
\end{displaymath}
The matrix~$\tau = \mat{\tau_1}{\tau_3}{\tau_3}{\tau_2} $ then satisfies the
``singular relation'' \cite[eq.\,(8)]{birkenhakeHumbertSurfacesKummer2003}
\begin{equation}
  \label{eq:singular}
  a\tau_1 + b\tau_3 + c\tau_2 + d\det(\tau) + e = 0.
\end{equation}

\begin{prop}
  \label{prop:birkenhake}
  Given integers~$(a,b,c,d,e)$ such that $b^2 - 4ac - 4de = \Delta_F$, one can
  compute a matrix~$\eta\in \Sp_4(\Z)$ with the following property: for
  every~$\tau\in \Half_2$ satisfying~\eqref{eq:singular}, the matrix~$\eta\tau$
  lies in the image of the Hilbert embedding~$\mathrm{Hilb}_F:\Half_1^2\to
  \Half_2$. This algorithm runs in quasi-linear time in
  $\log\max\{\abs{a},\abs{b},\abs{c},\abs{d},\abs{e}\}$.
\end{prop}

\begin{proof}
  Given our choice of matrix~$M_F$ in~\eqref{eq:Hilb}, there
  exists~$t\in \Half_1^2$ such that~$\tau = \mathrm{Hilb}_F(t)$ if and only if
  \begin{displaymath}
    (a,b,c,d,e) =
    \begin{cases}
      \paren[\big]{-\Delta_F/4, 0, 1, 0, 0} &\text{if } \Delta_F = 0\mod 4,\\
      \paren[\big]{(1 - \Delta_F)/4, 1, 1, 0, 0} & \text{if } \Delta_F = 1 \mod 4.
    \end{cases}
  \end{displaymath}
  Consequently, the algorithm we are looking for is
  exactly~\cite[Prop.~4.5]{birkenhakeHumbertSurfacesKummer2003}. Note that in
  Step~II of that proof, one can avoid using Dirichlet's theorem on primes in
  arithmetic progressions: we only need an integer~$n$ such that
  $p = (e_1 + na_1)/g_1$ is prime to~$c_1$. Such an~$n$ exists with
  $\abs{n}< c_1$. Then each integer~$m$ appearing in the algorithm satisfies
  \begin{displaymath}
    \logp \abs{m} = O\paren[\big]{\log\max\{1,\abs{a},\abs{b},\abs{c},\abs{d},\abs{e}\}}.
  \end{displaymath}
  Further, the algorithm only involves elementary arithmetic operations and
  gcd's which can be computed in quasi-linear time.
\end{proof}

Given~$\tau$, one could imagine solving for~$a,b,c,d,e$ numerically
using~\eqref{eq:singular} and the LLL algorithm, as in the usual strategy to
recognize an algebraic number of a known degree from a complex
approximation. This leads however to multiple issues, for instance when the
endomorphism ring of~$A$ is a quaternion algebra and thus contains multiple
quadratic fields. In~§\ref{sec:agm}, we propose an alternative strategy based
on computing the big period matrix, attached to~$\tau$, deducing~$\rho_A(\xi)$
from this data, and finally $\rho_R(\xi)$ using~\eqref{eq:endo}.

Besides inverting the Hilbert embedding, another issue is that
for~$\gamma\in C_\beta^{\mathrm{sym}}$ and~$t$ as above, the
matrices~$\mathrm{Hilb}_F(\gamma t)$ that appear in
equation~\eqref{eq:modeq-hilbert} may be very far from the fundamental
domain~$\Fund_2$. Running the reduction algorithm on each of these matrices
independently would be too expensive in general. Luckily, from the data
of~$\eta\in \Sp_4(\Z)$ such that~$\eta\tau = \mathrm{Hilb}_F(t)$ (where~$\tau$
is reduced) and~$\gamma\in C_\beta^{\mathrm{sym}}$, one can directly compute a
matrix~$\eta'\in \Sp_4(\Z)$ such that $\eta'\mathrm{Hilb}_F(\gamma t)$ is also
reasonably close to the cusp at infinity. Indeed, we have
\begin{displaymath}
  \mathrm{Hilb}_F(\gamma t) = \paren[\big]{\mathrm{Hilb}_F(\gamma) \eta} \tau,
\end{displaymath}
and we can run the following algorithm on the
matrix~$\mathrm{Hilb}_F(\gamma)\eta\in \GSp_4(\Q)$.

\begin{algo}
  \label{algo:change-rep}
  \algoinput{A matrix $\gamma\in\GSp_4(\Q)$ with integral coefficients.}
  \algooutput{A matrix $\eta\in \Sp_4(\Z)$ such that the lower left $2\times 2$
    block of~$\eta\gamma$ is zero.}
  \begin{enumerate}
  \item Let~$a,b,c,d$ be the $2\times 2$ blocks of~$\gamma$.
  \item \label{step:V-basis} Compute a $\Z$-basis~$(l_3,l_4)$ of $V\cap \Z^4$,
    where~$V\subset \Q^4$ is the $\Q$-vector space spanned by the two lines of
    the matrix $(-c^t\ \ a^t)$, using the classical algorithm for elementary
    divisors over~$\Z$.
  \item \label{step:sp-basis} Complete~$(l_3,l_4)$ into a symplectic basis
    $(l_1,l_2,l_3,l_4)$ of~$\Z^4$ for the standard symplectic pairing
    $\scal{u,v} =  u^t J v$, as follows.
    \begin{enumerate}
    \item Complete~$l_3$ into a basis $(x,y,l_3)$ of $W\cap \Z^4$,
      where~$W = \{v\in \Q^4: \scal{v,l_3} = 0\}$. Let~$l_2$ be an integral
      linear combination of~$x,y$ such that $\scal{l_2,l_4} = 1$, computed
      using an extended gcd.
    \item Compute a basis $(x,y)$ of $W\cap \Z^4$,
      where~$W = \{v\in \Q^4: \scal{v,l_2} = \scal{v,l_4} = 0\}$. Let~$l_1$ be
      an integral linear combination of~$x,y$ such that $\scal{l_1,l_3} = 1$,
      computed using an extended gcd.
    \end{enumerate}
  \item Output the matrix~$\eta$ whose lines are $l_1,\ldots,l_4$.
  \end{enumerate}
\end{algo}

\begin{prop}
  \label{prop:change-rep}
  \Cref{algo:change-rep} is correct and outputs~$\eta$ such that
  $\log\abs{\eta} = O\paren[\big]{\log\abs{\gamma}}$. It runs in quasi-linear
  time in $\log\abs{\gamma}$.
\end{prop}

\begin{proof}
  Step~\ref{step:V-basis} can be performed in
  time~$\Otilde\paren{\log\abs{\gamma}}$, and its output satisfies
  $\log\abs{l_k} = O(\log\abs{\gamma})$ for~$k = 3,4$. Since~$\gamma$ is
  symplectic,~$V$ is isotropic for the standard
  symplectic pairing, so step~\ref{step:sp-basis} succeeds. Extended
  gcd's can also be computed in quasi-linear time.
\end{proof}

With these ingredients in hand, the algorithm to numerically evaluate modular
equations of Hilbert type can be stated as follows. Recall the notations
$Q_\beta$ and $P_{\beta,k}$ from~§\ref{subsec:hilbert-denom}.

\begin{algo}
  \label{algo:numerical-hilbert}

  \algoinput{A totally positive~$\beta\in \Z_F$ for~$F = \Q(\sqrt{5})$ of prime
    norm~$\ell$; a complex embedding~$\mu$ of~$L$; the Gundlach invariants
    $\mu(g_1)$ and~$\mu(g_2)$ to precision~$N\in \Z_{\geq 0}$. Fix~$\eps = 2^{-10}$.}
  \algooutput{$Q_\beta\paren[\big]{\mu(g_1),\mu(g_2)}$
    and~$P_{\beta,k}\paren[\big]{\mu(g_1),\mu(g_2),X}$ for $1\leq k\leq 2$.}
  \begin{enumerate}
  \item \label{step:hilbert-j} Compute the Igusa
    invariants~$\mu(j_1),\mu(j_2),\mu(j_3)$ in~$L$ associated with the Gundlach
    invariants~$\mu(g_1),\mu(g_2)$
    using~\cite[Prop.~4.5]{lauterComputingGenusCurves2011}
    or~\cite[Cor.~A.15]{milioModularPolynomialsIlbert2020}. (This computation
    will actually be done in~$L$.)
  \item \label{step:hilbert-tau} Compute a period matrix~$\tau\in \Fund_2^\eps$
    whose Igusa invariants are~$\mu(j_1),\mu(j_2),\mu(j_3)$ using the AGM
    method.
  \item \label{step:hilbert-rm} Compute the analytic representation~$\rho_A(\xi)$ of the
    endomorphism~$\xi$ defined above.
  \item \label{step:hilbert-abcde} Compute the integers~$a,b,c,d,e,n$ as follows.
    \begin{enumerate}
    \item After taking imaginary parts in~\eqref{eq:endo}, we know
      $d\im(\tau_1)$. Use this to compute~$d$.
    \item Similarly, we know the quantities $n\im(\tau_1) - c\im(\tau_3)$ and
      $n\im(\tau_3) - c \im(\tau_2)$. Since~$\det\im(\tau)>0$, we can use this
      to compute~$n$ and~$c$.
    \item Similarly, we know $a\im(\tau_1) + (n+b)\im(\tau_3)$ and
      $a\im(\tau_3) + (n+b)\im(\tau_2)$. Use this to compute~$a$ and~$n+b$,
      hence~$b$.
    \item Finally, we know $n\tau_1 - c\tau_3 - e$. Use this to compute~$e$.
    \end{enumerate}
  \item \label{step:hilbert-eta} Use \cref{prop:birkenhake} to compute~$\eta\in \Sp_4(\Z)$ such
    that~$\eta\tau$ lies in the image of~$\mathrm{Hilb}_F$.
  \item \label{step:hilbert-loop} For each~$\gamma\in C_\beta^{\mathrm{sym}}$,
    \begin{enumerate}
    \item Run \cref{algo:change-rep} on $\mathrm{Hilb}_F(\gamma)\eta$ to
      obtain~$\zeta_\gamma\in \Sp_4(\Z)$ such that the lower left block
      of~$\zeta_\gamma \mathrm{Hilb}_F(\gamma)\eta$ is zero.
    \item \label{step:hilbert-fd} Compute~$\eta_\gamma\in \Sp_4(\Z)$ such that
      $\eta_\gamma\zeta_\gamma \mathrm{Hilb}_F(\gamma)\eta\tau \in \Fund_2^\eps$.
    \item \label{step:hilbert-theta} Evaluate the modular
      forms~$\mf{h}_4,\ldots,\mf{h}_{12}$ at this point of~$\Fund_2^\eps$ via theta
      constants. Deduce the values of~$\mf{h}_4,\ldots,\mf{h}_{12}$
      at~$\mathrm{Hilb}_F(\gamma)\eta \tau = \mathrm{Hilb}_F(\gamma t)$, where~$t\in \Half_1^2$
      is the period point such that~$\mathrm{Hilb}_F(t) = \eta\tau$, using the
      transformation law.
    \item \label{step:numerical-hmf} Deduce the values of the modular
      forms $\mf{G}_2, \mf{F}_6, \mf{F}_{10}, \mf{F}_{12}$ at~$\gamma t$
      using~\cite{resnikoffGradedRingIlbert1974}.
    \end{enumerate}
  \item \label{step:Gtau} Similarly evaluate~$\mf{h}_4,\ldots,\mf{h}_{12}$ at~$\tau$ and
    deduce the values of $\mf{G}_2,\mf{F}_6,\mf{F}_{10},\mf{F}_{12}$ at~$t$.
  \item \label{step:hilbert-prodtree} Evaluate $\mf{g}_\beta(t)$ and
    $\mf{g}_\beta(t) \Psi_{\beta,k}\paren[\big]{\mf{g}_1(t),\mf{g}_2(t),X}$ for $k = 1,2$. For
    the latter, we use a division-free formula analogous
    to~\eqref{eq:siegelnum}.
  \item Let~$w = 20(\ell + 1)$ and
    valuate~$\lambda = 2^{\floor{w/3}} \mf{G}_2(t)^{2\floor{w/6}- n}
      \mf{F}_{10}(t)^{-m}$ where~$m,n$ are defined as in
      \cref{prop:mf-poly-hilbert}.
  \item Output the results of step~\ref{step:hilbert-prodtree} multiplied by~$\lambda$.
  \end{enumerate}
\end{algo}

If~$F = \Q(\sqrt{8})$, we only have to replace the fundamental modular
forms~$\mf{G}_2, \mf{F}_6, \mf{F}_{10}, \mf{F}_{12}$ by the modular forms $\mf{G}_2, \mf{F}_4, \mf{F}_6$ at
steps~\ref{step:numerical-hmf} and~\ref{step:Gtau}, and adapt how
\cref{prop:mf-poly-hilbert} is used given that the denominator $\mf{g}_\beta$ has a
higher weight.

In order to evaluate the derivatives of Hilbert modular equations with respect
to the variables $G_1$ and~$G_2$, we proceed as
in~§\ref{subsec:numerical-siegel}: we evaluate the derivatives
of~$\Psi_{\beta,k}\paren[\big]{\mf{g}_1(t),\mf{g}_2(t),X}$ with respect to the
two entries of~$t\in \Half_1^2$, evaluate the derivatives of Gundlach
invariants at~$t$, and apply the chain rule. To do so, we must assume that the
$2\times 2$ matrix $\mf{dG}(t)$, defined in an analogous way to $\mf{dJ}(\tau)$
in~§\ref{subsec:siegel-invariants}, is invertible. This is the case as soon
as~$(g_1,g_2)$ lies outside a certain Zariski-closed set of codimension~1 in
affine 2-space.

\section{Fundamental algorithms for period matrices}
\label{sec:agm}

In this section, we focus on the fundamental algorithms on genus~$2$ period
matrices that appear in
\cref{algo:numerical-siegel,algo:numerical-hilbert}. These are the reduction
algorithm to the fundamental domain~$\Fund_2$ and its variants~$\Fund_2^\eps$
(§\ref{subsec:fund}); the AGM method (§\ref{subsec:agm}) and its complexity
analysis (§\ref{subsec:agm-cost}); gaining information on bases of differential
forms after the AGM method (§\ref{subsec:differentials}); and finally,
computing the analytic representation of a real
endomorphism~(§\ref{subsec:analytic-rep}).

\subsection{The reduction algorithm}
\label{subsec:fund}

The algorithm to reduce a given period matrix to~$\Fund_2$, and its complexity
analysis, is presented by Streng
in~\cite[§6]{strengComputingIgusaClass2014}. Streng considers period matrices
attached to abelian surfaces with complex multiplication whose entries are
algebraic numbers, and uses exact computations. Here, we adapt the reduction
algorithm to the setting of interval arithmetic and provide a new analysis of
complexity and precision losses. Recall that~$\Sigma\subset\Sp_4(\Z)$ denotes
the finite set of matrices defining the boundary of~$\Fund_2$ introduced in
\cref{def:fund}.

\begin{algo}
  \label{algo:reduction}
  \algoinput{A matrix~$\tau_0\in \Fund_2$ to precision~$N\in \Z_{\geq 0}$.}
  \algooutput{A matrix~$\gamma\in \Sp_4(\Z)$ such that~$\gamma\tau_0$ lies
    in~$\Fund_2$ or is very close to~$\Fund_2$.}
  \begin{enumerate}
  \item Let~$\gamma = I_4$ and $\tau = \tau_0$.
  \item \label{step:red-imag} Compute a matrix~$U\in \GL_2(\Z)$ such that $U \im(\tau) U^t$ is
    (very close to being) Minkowski-reduced as follows.
    \begin{enumerate}
    \item Compute the Cholesky decomposition~$\im(\tau) = M^t M$
      of~$\im(\tau)$, where~$M$ is upper-triangular.
    \item \label{step:red-M} Let~$B\in \Z_{\geq 0}$ be maximal such that the error
      radius of each entry of~$M$ is at most~$2^{-B}$, and let~$M'$ be the
      matrix obtained by reducing the coefficients of~$2^{B}M$ to the nearest
      integers. If~$\det(M')=0$, then the algorithm fails.
    \item Apply a quasi-linear version of Gauss's algorithm
      to~$M'$~\cite{yapFastUnimodularReduction1992} to obtain a reduced basis
      of the lattice~$M'\Z^2$, and let~$U$ be the base-change matrix.
    \end{enumerate}
  \item Multiply~$\gamma$ on the left by $\mat{U}{0}{0}{U^{-t}}$ and
    replace~$\tau$ by $U\tau U^{t}$.
  \item \label{step:red-real} Compute a symmetric matrix~$S\in \mathrm{Mat}_{2\times 2}(\Z)$ such
    that $\abs{\re(\tau) - S}$ is (very close to being) smaller than $1/2$,
    by rounding the entries of $\re(\tau)$.
  \item Multiply~$\gamma$ on the left by $\mat{I_2}{-S}{0}{I_2}$ and
    replace~$\tau$ by~$\tau - S$.
  \item \label{step:red-sigma} Find~$\sigma\in \Sigma$ such that (an exact
    upper bound we compute on) $\det(\sigma^*\tau)$ is minimal. If this upper
    bound is not strictly less than~$1$, then exit the loop and output~$\gamma$, otherwise continue.
  \item \label{step:red-recompute} Multiply~$\gamma$ on the left
    by~$\sigma$, recompute~$\tau = \gamma\tau_0$, and go back to step~\ref{step:red-imag}.
\end{enumerate}
\end{algo}

In order to round a real number, encoded as an interval, to the nearest integer
as in steps~\ref{step:red-M} and~\ref{step:red-real}, we simply reduce its
midpoint to the nearest integer. In most cases, even when we work in interval
arithmetic, we have that $U^t\im(\tau) U$ is Minkowski-reduced in
step~\ref{step:red-imag} and $\abs{\re(\tau) - S}\leq 1/2$ in
step~\ref{step:red-real}, but adding the words ``very close to'' is necessary
for the algorithm to work on every input.

In order to study the behavior and cost of \cref{algo:reduction}, we introduce
the following notation. For $\tau\in\Half_2$, we define
\begin{displaymath}
  \Lambda(\tau) = \log\max\{2, \abs{\tau}, \det\im (\tau)^{-1}\}.
\end{displaymath}
Denote by $\lambda_1(\tau)\leq \lambda_2(\tau)$ the two eigenvalues of
$\im(\tau)$, and by $\nu_1(\tau)\leq \nu_2(\tau)$ the successive minima of
$\im(\tau)$ on the lattice $\Z^2$. We also write
\begin{displaymath}
  \Xi(\tau) = \log \max\{2, \nu_1(\tau)^{-1}, \nu_2(\tau)\}.
\end{displaymath}
By \cite[eq.~(6.4)]{strengComputingIgusaClass2014}, we always have
\begin{equation}
  \label{eq:minima}
  \dfrac{3}{4} \nu_1(\tau) \nu_2(\tau) \leq \det\im(\tau) \leq \nu_1(\tau) \nu_2(\tau),
\end{equation}
so that
\begin{displaymath}
  \log\max\{\lambda_1(\tau)^{-1},\lambda_2(\tau),\nu_1(\tau)^{-1},
  \nu_2(\tau)\} = O(\Lambda(\tau)) \quad \text{and} \quad \Xi(\tau) = O(\Lambda(\tau)).
\end{displaymath}

\begin{thm}
  \label{thm:reduction} There exists an absolute constant~$K>0$ such that the
  following holds: for every~$\tau_0\in \Half_2$, \cref{algo:reduction} is
  given as input an approximation of~$\tau_0$ at absolute
  precision~$N\geq 2K\Lambda(\tau_0)$ and the working precision~$N$, then the
  algorithm succeeds, outputs a matrix~$\gamma\in \Sp_4(\Z)$ such that
  $\gamma\tau_0\in \Fund_2^\eps$ where~$\eps = 2^{-N + K\Lambda(\tau_0)}$
  and~$\abs{\gamma}\leq K\Lambda(\tau_0)$, and runs in time
  $O\paren[\big]{\Xi(\tau_0)\M(N)\log N}$.
\end{thm}

The proof relies on the following lemmas.

\begin{lem}
  \label{lem:star}
  Let $\tau,\tau'\in \Half_2$, and assume that there exists
  $\gamma\in \Gamma(1)$ such that $\tau'=\gamma\tau$. Then we have
  \begin{align*}
    \logp \max\{\abs[\big]{\gamma^*\tau},\abs{(\gamma^*\tau)^{-1}}\}
    &= O\paren[\big]{\max\{\Lambda(\tau),\Lambda(\tau')\}},\\
    \log\abs{\gamma}
    &= O\paren[\big]{\max\{\Lambda(\tau),\Lambda(\tau')\}}.
  \end{align*}
\end{lem}

\begin{proof}
  Let $M$ be a real matrix such
  that $M^t M = \im(\tau)$. Then we have
  \begin{displaymath}
    \im(\tau') = (\gamma^*\tau)^{-t} \im(\tau) (\gamma^*\conj{\tau})^{-1}
    = M'^t \conj{M'}
  \end{displaymath}
  where $M' = M (\gamma^*\tau)^{-1}$. Since
  $\abs{M}\leq \abs{\im(\tau)}^{1/2}$ and
  $\abs{M'}\leq \abs{\im(\tau')}^{1/2}$, we obtain
  \begin{displaymath}
    \abs{\gamma^*\tau} = \abs{M'^{-1} M}
    \leq 2 \dfrac{\abs{M'}}{\det(M')} \abs{M}
  \end{displaymath}
  so
  $\logp\abs{\gamma^*\tau} = O(\max\{\Lambda(\tau),\Lambda(\tau')\})$,
  and similarly for $(\gamma^*\tau)^{-1}$.

  We now bound~$\abs{\gamma}$. Let~$a,b,c,d$ be the $2\times 2$ blocks
  of~$\gamma$. Then $\im(\gamma^*\tau) = c\im(\tau)$, so
  $\logp\abs{c} = O(\max\{\Lambda(\tau),\Lambda(\tau')\})$, and in turn
  \begin{displaymath}
    \logp\abs{d} \leq \logp(\abs{c\tau} + \abs{\gamma^*\tau}) =
  O(\max\{\Lambda(\tau),\Lambda(\tau')\}).
  \end{displaymath}
  Finally, we obtain upper bounds on~$\abs{a}$ and~$\abs{b}$ using the relation
  $a\tau+b = \tau'(c\tau+d)$.
\end{proof}

\begin{lem}
  \label{lem:minkred}
  There exists an absolute constant~$K>0$ such that the following holds. Let
  $\tau\in \Half_2$ and $\eps>0$. Then, given~$\tau$ to precision
  $N \geq K\Lambda(\tau) + \abs{\log_2\eps}$, step~\ref{step:red-imag} of
  \cref{algo:reduction} computes $U\in\GL_2(\Z)$ such that
  $U \im(\tau) U^t$ is $\eps$-Minkowski reduced
  and~$\log\abs{U}\leq K\Lambda(\tau)$ in time~$O(\M(N)\log N)$.
\end{lem}

\begin{proof}[Proof (sketch)]
  Informally, if~$K$ is correctly chosen, then all the numerical errors we
  encounter in step~\ref{step:red-imag} are extremely small compared to
  $\det\im(\tau)$. In particular, the Cholesky decomposition succeeds and the
  matrix~$M'$ is still invertible. The base change matrix $U\in\GL_2(\Z)$
  satisfies $ \log\abs{U} = O(\Lambda(\tau))$ by
  \cite[Lem.~6.6]{strengComputingIgusaClass2014}. As a consequence, the matrix
  $U \im(\tau) U^t$ is $\eps$-Minkowski reduced.
\end{proof}

\begin{lem}
  \label{lem:reduction}
  Assume that the absolute errors on the entries of~$\tau$ during
  \cref{algo:reduction} remains smaller than $2^{-10}$. Then the number of
  loops in the algorithm is~$O(\Xi(\tau_0))$. Moreover, during the execution of
  the algorithm, the quantities
  $\abs[\big]{\log \paren[\big]{\abs{\det (\gamma^*\tau_0)}}}$, $\Lambda(\tau)$
  and~$\log \abs{\gamma}$ remain in~$O(\Lambda(\tau_0))$.
\end{lem}

\begin{proof}
  The number of iterations is $O(\Xi(\tau_0))$ by
  \cite[Prop.~6.16]{strengComputingIgusaClass2014}: observe that
  \cite[Lem.~6.11 and 6.12]{strengComputingIgusaClass2014} still apply, because
  $\det \im(\tau)$ increases strictly each time through
  step~\ref{step:red-sigma}. The proof of
  \cite[Lem.~6.16]{strengComputingIgusaClass2014} also remains valid, with
  slightly worse constants. This shows that~$\log\abs{\tau}$ and
  $\log\abs{\det(\gamma^*\tau_0)}$ remain in~$O(\Lambda(\tau_0))$.

  During the execution of the algorithm, we have
  $\logp \nu_2(\tau) = O(\Lambda(\tau_0))$
  \cite[Lem.~6.13]{strengComputingIgusaClass2014}. Moreover
  $\det\im(\tau)\geq \det\im(\tau_0)$, so
  \begin{displaymath}
    \nu_1(\tau)^{-1} \leq \dfrac{\nu_2(\tau)}{\det\im(\tau)}
    \leq \dfrac{\nu_2(\tau)}{\det\im(\tau_0)}
    \leq \dfrac{4\nu_2(\tau)}{3\nu_1(\tau_0)^2}
  \end{displaymath}
  by~\eqref{eq:minima}. Thus $\Lambda(\tau) = O(\Lambda(\tau_0))$. The
  remaining upper bounds follow from \cref{lem:star}.
\end{proof}

\begin{proof}[Proof of \cref{thm:reduction}.]
  By \cref{lem:reduction}, there exists a constant~$K'$ such that
  $\log\abs{\gamma}\leq K' \Lambda(\tau)$ during \cref{algo:reduction} as long
  as the absolute errors remain smaller than~$2^{-10}$. Given
  step~\ref{step:red-recompute} of the algorithm and \cref{lem:minkred}, by
  choosing~$K$ appropriately, we can ensure that the absolute errors remain
  smaller than $2^{-10}$ at every step. For such a choice of~$K$, the upper
  bounds on~$\log\abs{\gamma}$ and~$\Lambda(\tau)$ remain valid until the end
  of the algorithm. By \cref{lem:reduction}, there are $O(\Xi(\tau_0))$
  iterations of the loop in \cref{algo:reduction}.

  We now consider the complexity of each iteration. By \cref{lem:minkred}, the
  Minkowski reduction step can be performed in time $O(\M(N)\log N)$. Rounding
  the real part and choosing~$\sigma$ can be done in time~$O(N)$ and~$O(\M(N))$
  respectively. Therefore, the whole algorithm runs in time
  $O\paren[\big]{\Xi(\tau_0)\M(N)\log N}$.

  Finally, we show that $\tau = \gamma\tau_0$ lies in~$\Fund_2^\eps$ at the end
  of the algorithm. Up to increasing~$K$, we can ensure that the error radii on
  the entries of~$\tau$ are at most $\eps \cdot 2^{-K'\Lambda(\tau)}$, for any
  fixed choice of constant~$K'$. For a good choice of~$K'$, the fact that we
  exited the main loop in step~\ref{step:red-sigma} implies that
  $\det(\sigma^*\tau) \geq 1-\eps$ for each~$\sigma\in \Sigma$, one of the
  defining conditions of~$\Fund_2^\eps$ in \cref{def:fund}. Moreover,
  step~\ref{step:red-real} ensures that $\abs{\re(\tau)}\leq 1/2 + \eps$, and
  $\im(\tau)$ is $\eps$-Minkowski reduced by \cref{lem:minkred}.
\end{proof}

\begin{rem}
  \label{rem:streng-complexity}
  The complexity bound of \cref{thm:reduction} appears to be sharper than
  \cite[Thm.~6.17]{strengComputingIgusaClass2014}, where the large height of
  the algebraic numbers appearing in the reduction algorithm adversely impacts
  the complexity estimate.
\end{rem}

\subsection{The AGM method}
\label{subsec:agm}

The AGM method to compute a period matrix $\tau\in \Fund_2$ from a triple of
Igusa invariants
over~$\C$~\cite[Chap.~9]{dupontMoyenneArithmeticogeometriqueSuites2006}
consists of two main steps:
\begin{enumerate}
\item First, we recover the set of squared theta quotients
  $\mf{\theta}_{a,b}^2(\tau)/\mf{\theta}_0^2(\tau)$ for $a,b\in \{0,1\}^2$, which is
  slightly finer information than the triple of Igusa invariants. We do this by
  first reconstructing a genus~2 curve over the complex numbers that has the
  required invariants using Mestre's
  algorithm~\cite{mestreConstructionCourbesGenre1991}, then apply Thomae's
  formula~\cite[Thm.~IIIa.8.1]{mumfordTataLecturesTheta1984} to recover the
  associated squared theta quotients.
\item \label{step:sieveout} Second, we compute the limits of certain AGM
  sequences constructed from these theta values to obtain the period
  matrix~$\tau\in \Fund_2$ as in \cite[Algorithm
  13]{dupontMoyenneArithmeticogeometriqueSuites2006}.
\end{enumerate}

We actually have to be more careful. Thomae's formula involves a choice
of ordering of the Weierstrass points of the genus~2 curve as well as certain
sign choices, and only certain choices will provide the correct theta quotients
at~$\tau\in \Fund_2$. In the first step, we apply the strategy of~\cite[§9.2.3,
``Variantes'']{dupontMoyenneArithmeticogeometriqueSuites2006} and compute
several possible vectors of theta quotients. In step~\ref{step:sieveout}, we
are then able to sieve out most of the phony values. At the end of the sieving
procedure, we might still obtain several reasonable-looking
matrices~$\tau\in \Fund_2$ (though we don't expect this to happen generically.)
We then proceed to a validation step where we attempt to prove that the Igusa
invariants of the given complex ball~$\tau$ provably contain the input
values. We can then safely output any~$\tau$ that passes this test.

While the whole algorithm might seem intricate, it is a very favorable way of
computing period matrices in genus~2. Its complexity is quasi-linear in the required
precision as AGM sequences converge rapidly. Moreover, it only involves
elementary operations on complex numbers, so we are able analyze its complexity
and precision losses explicitly.

\begin{rem}
  \label{rem:complexity-periods}
  To our knowledge, an analysis of the complexity of computing periods of
  genus~2 curves in terms of the curve or its invariants has never appeared in
  the literature. We note that computing periods to low precision
  using~\cite{molinComputingPeriodMatrices2019} would provide information on
  the correct sign choices in Thomae's formula. It would be interesting to
  analyze the complexity of such a computation in terms of the input curve.
\end{rem}

In order to explain how the sieving-out process works, we provide more details
on the AGM method, starting with the definition of an AGM sequence.

\begin{defn}
  \label{def:agm}
  We say that a sequence $(S_n)_{n\geq 0}$, where each $S_n$ is a quadruple of
  complex numbers~$s_{n,b}$ indexed by $b\in \{0,1\}^2$, is an \emph{AGM sequence with
    good sign choices} (in genus~$2$) if the following conditions hold:
  \begin{itemize}
  \item For every~$n\geq 0$, there exists a tuple of square roots
    $(u_{n,b})_{b\in \{0,1\}^2}$ of the entries of~$S_n$ that lie in a common
    open quarter plane centered at the origin in~$\C$, i.e.~$u_{n,b}\neq 0$ and
    the angle between any two square roots $u_{n,b}$ and~$u_{n,b'}$ is less
    than $\pi/2$.  These square roots~$u_{n,b}$ are said to be a \emph{good
      choice} of (signs of the) square roots.
  \item For every~$n\geq 0$, if $(u_{n,b})_{b\in \{0,1\}^2}$ is a good choice of
    square roots of the entries of~$S_n$, then
    $S_{n+1} = (s_{n+1,b})_{b\in \{0,1\}^2}$, where
    \begin{displaymath}
      s_{n+1,b} = \frac 14 \sum_{b'\in \{0,1\}^2} u_{n,b'} u_{n,b + b'}
    \end{displaymath}
    for each~$b\in \{0,1\}^2$. The addition $b+b'$ is understood as in
    $(\Z/2\Z)^2$.
  \end{itemize}
  The AGM sequence with good sign choices starting from a given~$S_0$ is unique
  if it exists. In that case, the complex numbers $s_{n,b}$ as $n\to\infty$
  converge quadratically to a common $s_\infty\in \C^\times$ called the
  \emph{limit} of the AGM sequence.
\end{defn}

The method also involves the action of certain matrices of~$\Sp_4(\Z)$. Define
the matrices $J, M_1, M_2, M_3\in \Sp_4(\Z)$ whose actions on $\tau\in\Half_2$
are given by
\begin{displaymath}
  \begin{matrix}
  &J\tau = -\tau^{-1},&\qquad M_1\tau = \tau+\mat{1}{0}{0}{0},\\[1em]
  &M_2\tau = \tau+\mat{0}{0}{0}{1},&\quad\text{and}\quad M_3\tau = \tau+\mat{0}{1}{1}{0}.
  \end{matrix}
\end{displaymath}
Let $\gamma_k = (JM_k)^2$ for $1\leq k\leq 3$, and further
let~$\gamma_0 = I_4$. Given theta quotients at $\tau$, we can compute the
values of the modular functions
\begin{displaymath}
  \mf{f}_b = \mf{\theta}_{00,b}^2/\mf{\theta}_0^2 \quad \text{for } b\in \{0,1\}^2
\end{displaymath}
at $\gamma_k\tau$ for $0\leq k\leq 3$ using the transformation
formula~\cite[§II.5]{mumfordTataLecturesTheta1983}. We then have the following
result, stated
as~\cite[Conj.~9.1]{dupontMoyenneArithmeticogeometriqueSuites2006} and proved
in~\cite{kiefferSignChoicesAGM2022}.

\begin{prop}
  \label{prop:sign-choices}
  For every~$\tau\in \Half_2$ and every~$0\leq k\leq 3$, the AGM sequence with
  good sign choices starting from the quadruple
  $\paren[\big]{\mf{f}_b(\gamma_k\tau)}_{b\in \{0,1\}^2}$
  converges to $1/\mf{\theta}_0^2(\gamma_k\tau)$. For every~$n\geq 0$, its $n$-th
  term is the quadruple
  \begin{displaymath}
    \frac{1}{\mf{\theta}_0^2(\gamma_k\tau)}\paren[\big]{\mf{\theta}_{00,b}^2(2^n\gamma_k\tau)}_{b\in \{0,1\}^2}.
  \end{displaymath}
\end{prop}

Computing these limits (in interval arithmetic) gives us access to the
matrix~$\tau$ using the relations stated
after~\cite[Conj.~9.1]{dupontMoyenneArithmeticogeometriqueSuites2006}:
\begin{equation}
  \label{eq:dupont}
  \begin{aligned}
    \mf{\theta}_0^2(\gamma_1\tau) &= - i \tau_1 \mf{\theta}_{01,00}^2(\tau),\\
    \mf{\theta}_0^2(\gamma_2\tau) &= - i \tau_2 \mf{\theta}_{10,00}^2(\tau),\\
    \mf{\theta}_0^2(\gamma_2\tau) &= (\tau_3^2 - \tau_1\tau_2)\mf{\theta}_0^2(\tau).
  \end{aligned}
\end{equation}
The theta values on the right hand side can be computed from the known values
of~$\mf{\theta}_0^2(\tau)$ and the square theta
quotients~$\mf{\theta}_{a,b}^2(\tau)/\mf{\theta}_0^2(\tau)$. We arrive at the
following algorithm, which
recasts~\cite[§9.2.3]{dupontMoyenneArithmeticogeometriqueSuites2006} in a
provably correct setting.

\begin{algo}
  \label{algo:agm}
  \algoinput{A candidate vector
    $\paren[\big]{\mf{\theta}_{a,b}^2(\tau)/\mf{\theta}_0^2(\tau)}_{a,b\in
      \{0,1\}^2}$ to precision~$N$ for an unknown~$\tau\in \Fund_2$.}
  \algooutput{Either an approximation of~$\tau$, or one of the two special
    values \wrongc\ or \insufp.}
  \begin{enumerate}
  \item Compute $\paren[\big]{\mf{f}_b(\gamma_k\tau)}_{b\in \{0,1\}^2}$ for each
    $0\leq k\leq 3$ with the transformation
    law~\cite[§II.5]{mumfordTataLecturesTheta1983}.
  \item \label{step:agm-aux} For each $0\leq k\leq 3$, attempt to compute
    $1/\mf{\theta}_0^2(\gamma_k\tau)$ as follows.
    \begin{enumerate}
    \item \label{step:agm-1} Compute successive terms~$(s_{n,b})$ of the AGM
      sequence with good sign choices starting from the quadruple
      $\paren[\big]{\mf{f}_b(\gamma_k\tau)}_{b\in \{0,1\}^2}$ until
      $\abs{s_{n,b} - s_{n,00}} \leq 2^{-10} \abs{s_{n,00}}$ for each~$b$. If
      at any point a good choice of square roots does not exist (resp.~cannot
      be determined but might exist), stop and output \wrongc\ (resp. \insufp.)
      Let~$n_0 = n$.
    \item \label{step:agm-2} Compute further terms~$S'_n$ of the AGM sequence
      with good sign choices starting from
      $ (s_{n_0,b}/s_{n_0,00})_{b\in \{0,1\}^2}$ until we obtain an
      approximation of its limit~$s'_\infty$ up to an error of~$2^{-N}$. This
      is the case when the four entries of~$S'_n$ are within a distance
      $2^{-N-6}$ of each other
      by~\cite[Prop.~7.1]{dupontMoyenneArithmeticogeometriqueSuites2006}. The
      product $s_{n_0,00}s'_\infty$ is then a complex ball containing
      $1/\mf{\theta}_0^2(\gamma_i\tau)$.
    \end{enumerate}
  \item \label{step:agm-check} Compute~$\tau$
    using~\eqref{eq:dupont}. If~$\tau$ certainly lies outside~$\Fund_2$
    (resp.~does not certainly lie inside $\Fund_2^\eps$ for~$\eps = 2^{-10}$),
    output \wrongc\ (resp.~\insufp.)  Otherwise, output~$\tau$.
  \end{enumerate}
\end{algo}

Given \cref{prop:sign-choices}, \cref{algo:agm} terminates and is
correct in the following sense:
\begin{itemize}
\item If its output is \wrongc, then the input does not correspond to squared theta
  quotients of any~$\tau\in \Fund_2$.
\item Conversely, if the input consists of theta quotients at a
  valid~$\tau\in \Fund_2$ and if the input precision is large enough, then its
  output is an approximation of~$\tau$.
\end{itemize}
We make the required input precision in \cref{algo:agm} explicit
in~§\ref{subsec:agm-cost}: see \cref{thm:agm-cost}.

After this sieving-out process, we use a validation step to certify that a
matrix~$\tau$ output by \cref{algo:agm} indeed has the requested Igusa
invariants. To this end, we use an explicit version of the inverse function
theorem. Recall the notation~$\norm{\cdot}$ from~§\ref{subsec:intro-notation}.

\begin{lem}
  \label{lem:inverse-fct} Let~$n\geq 1$ and fix constants~$\delta>0$
  and~$B\geq 0$.  Let~$x_0\in \C^n$, and let~$\mathcal{V}\subset\C^n$ denote
  the open ball centered in~$x_0$ of radius~$\delta$ for~$\norm{\cdot}$.  Let
  $f: \mathcal{V}\to \C^n$ be a holomorphic function such that~$df(x_0)$ is
  invertible and~$df$ is $B$-Lipschitz for~$\norm{\cdot}$
  on~$\mathcal{V}$. Let~$y\in \C^n$. If
  \begin{displaymath}
    \norm{f(x_0) - y} \norm[\big]{df(x_0)^{-1}}< \delta \quad\text{and}\quad
    \norm{f(x_0) - y} \norm[\big]{df(x_0)^{-1}}^2 B < \frac 12,
  \end{displaymath}
  then the equation~$f(x)= y$ has a unique solution in~$\mathcal{V}$.
\end{lem}

\begin{proof}
  In fact, one can prove that Newton's method starting from~$x_0$ converges to
  a solution of~$f(x) = 0$. The case~$\mathcal{V}\subset \R^n$ is known as (a consequence
  of) Kantorovich's theorem, and the case~$\mathcal{V}\subset\C^n$ follows by
  considering~$\C$ as~$\R^2$.
\end{proof}

We apply this lemma to the function~$f = (\mf{j}_1,\mf{j}_2,\mf{j}_3)$ (using
$\tau_1,\tau_2,\tau_3$ as coordinates) under the assumption that the
matrix~$\mf{dJ}(\tau)$ from~\eqref{eq:dJ} is invertible.

\begin{algo}
  \label{algo:agm-verification}

  \algoinput{A dyadic matrix~$\tau_0\in \Fund_2^\eps$ for~$\eps = 2^{-10}$; the
    putative theta values $\mf{\theta}_{a,b}^2(\tau_0)/\mf{\theta}_0^2(\tau_0)$ for
    all characteristics $a,b\in \{0,1\}^2$ to precision~$N$.}  \algooutput{A
    radius~$\delta>0$ such that there certainly exists~$\tau\in \Half_2$ such that
    $\norm{\tau - \tau_0} < \delta$ and which realizes the corresponding Igusa
    invariants, or \unknown.}
  \begin{enumerate}
  \item \label{step:verif-theta} Evaluate~$f(\tau_0)$ and~$\mf{dJ}(\tau_0)$ to
    precision~$N$
    using~\cite{kiefferCertifiedEwtonSchemes2022}. If~$\mf{dJ}(\tau_0)$ is not
    invertible, then stop and output \unknown. Otherwise, compute
    $\norm[\big]{\mf{dJ}(\tau_0)^{-1}}$.
  \item Compute~$f(\tau)$ from the input data,
    then~$\norm{f(\tau_0)-f(\tau)}$. Compute~$\delta > 0$ such that
    $\norm{f(\tau_0)-f(\tau)} \norm[\big]{\mf{dJ}(\tau_0)^{-1}} < \delta$, and
    compute~$B\geq 0$ such that $df$ is $B$-Lipschitz on the ball~$\mathcal{V}$
    centered in~$\tau_0$ of radius~$\delta$. This can be done using uniform
    bounds for theta functions and their derivatives on~$\Fund_2^\eps$, as
    in~\cite[§9]{klingenIntroductoryLecturesIegel1990}
    or~\cite[§6.2]{dupontMoyenneArithmeticogeometriqueSuites2006}.
  \item If~$\norm{f(\tau_0)-f(\tau)} \norm[\big]{\mf{dJ}(\tau_0)^{-1}}^2 B < 1/2$,
    then output~$\delta$, otherwise output \unknown.
  \end{enumerate}
\end{algo}

\Cref{algo:agm-verification} is correct by \cref{lem:inverse-fct}. We prove
in~§\ref{subsec:agm-cost} that its output is a positive (and small)
radius~$\delta$ and not \unknown\ if the input~$\tau_0$ is close enough to the
actual~$\tau\in \Fund_2$ corresponding to the specified theta values.

We put the previous buiding blocks together in the following algorithm for
computing period matrices using the AGM in step~\ref{step:siegel-tau} of
\cref{algo:numerical-siegel} and step~\ref{step:hilbert-tau} of
\cref{algo:numerical-hilbert}. To ease the complexity analysis
of~§\ref{subsec:agm-cost}, we do not make Mestre's algorithm or Thomae's
formula part of the algorithm: we assume that we have computed a genus~$2$
curve realizing the required Igusa invariants and its Weierstrass points, and
return to this part in~§\ref{sec:precision}.

\begin{algo}
  \label{algo:periods}

  \algoinput{A monic polynomial~$f\in \C[X]$ of degree~$6$ with simple
    roots~$\rho_1,\ldots,\rho_6 \in \C$; a working
    precision~$N\in \Z_{\geq 0}$. Fix~$\eps=2^{-10}$.}  \algooutput{A period
    matrix~$\tau\in \Fund_2^\eps$ for the Jacobian~$\Jac(\Crv_f)$ of the curve
    $\Crv_f: y^2 = f(x)$.}

  \begin{enumerate}
  \item \label{step:periods-choices} Let~$\mathcal{Z}$ denote the set of all
    $720\times 16 = 11520$ possible choices of orderings of the roots of~$f$ as
    well as subsequent sign choices in Thomae's formula. Pick a starting
    precision~$N'=400$, say. If at any point~$\mathcal{Z}$ is empty, then the
    algorithm fails.
  \item \label{step:periods-agm} For each~$z\in \mathcal{Z}$, compute the associated
    candidate vector~$\mathcal{Q}(z)$ of squared theta quotients
    $\mf{\theta}_{a,b}^2(\tau)/\mf{\theta}_0^2(\tau)$ by applying Thomae's
    formula at precision~$N'$. Apply \cref{algo:agm} to~$\mathcal{Q}(z)$. If
    the output is \wrongc, then remove~$z$ from~$\mathcal{Z}$.
  \item \label{step:periods-loop} If~$\mathcal{Z}$ has only one element~$z$,
    run~\cref{algo:agm} again on~$\mathcal{Q}(z)$ at the full precision~$N$ and
    output the result (if it is \insufp, then the algorithm fails.) Otherwise,
    if~$2N' \leq N$, then replace~$N'$ by~$2N'$ and go back to
    step~\ref{step:periods-agm}; if $2N' > N$, then go
    to~step~\ref{step:periods-sieve}.
  \item \label{step:periods-sieve} (At this step, the set~$\mathcal{Z}$
    contains at least two, but hopefully much less than 11520 elements.) For
    each~$z\in \mathcal{Z}$, apply \cref{algo:agm} at the working
    precision~$N$. If the output is not a period matrix, then remove~$z$
    from~$\mathcal{Z}$, otherwise denote the output
    by~$\tau(z)$. If~$\mathcal{Z}$ contains only one element~$z$ and all the
    other outputs were \wrongc, then stop and output~$\tau(z)$.
  \item \label{step:periods-theta} (At this step, we expect that~$\mathcal{Z}$ is even
    smaller than in step~\ref{step:periods-sieve}.) For each~$z\in \mathcal{Z}$, evaluate the
    theta quotients~$\mf{\theta}_{a,b}^2/\mf{\theta}_0^2$ at~$\tau(z)$ to
    precision~$N$. If their intersection as complex balls with~$\mathcal{Q}(z)$ is
    empty, then remove~$z$ from~$\mathcal{Z}$.
  \item \label{step:periods-verif} Run \cref{algo:agm-verification} on the
    midpoint~$\tau_0(z)$ of~$\tau(z)$ for each~$z\in \mathcal{Z}$. If it not
    successful or its output does not lie in~$\Fund_2^\eps$, then remove~$z$
    from~$\mathcal{Z}$.  Otherwise let~$\delta(z)$ be the computed
    radius. Otherwise, find an element~$z\in \mathcal{Z}$ for which~$\delta(z)$
    is minimal, and output~$\tau_0(z)$ with an error radius of~$\delta(z)$.
  \end{enumerate}
\end{algo}

\begin{rem}
  \label{rem:typical-agm}
  One can further tweak \cref{algo:periods} to enhance its performance in
  practice. For instance, the initial value of~$N'$ can be chosen such that the
  very last value of~$N'$ in step~\ref{step:periods-loop} is
  approximately~$N/2$. One could also consider quadrupling instead of
  doubling~$N'$ to shorten the loop at the cost of potentially more expensive
  computations. Yet another possibility would be to compute theta values to
  sieve out period matrices as in step~\ref{step:periods-theta} earlier in the
  algorithm, i.e.~in step~\ref{step:periods-agm}. In
  step~\ref{step:periods-verif}, one might consider applying
  \cref{algo:agm-verification} only once if certain matrices~$\tau(z)$ are
  numerically indistinguishable.

  On a typical instance, we observe that on the first run, the
  list~$\mathcal{Z}$ contains only one element in step~\ref{step:periods-loop}.
  Steps~\ref{step:periods-sieve}--\ref{step:periods-verif} are necessary in
  certain special cases, e.g.~when the AGM sequences appearing for the
  legitimate matrix~$\tau\in \Fund_2$ all consist of positive real numbers: for
  other sign choices, we obtain both positive and negative real numbers, and we
  can never certify that a good choice of square roots does not exist in
  step~\ref{step:agm-1} of \cref{algo:agm}.
\end{rem}

\begin{rem}
  \label{rem:dtheta-as-dj}
  The assumption that~$\mf{dJ}(\tau)$ is invertible in \cref{algo:periods} is
  not an essential one: we could apply \cref{lem:inverse-fct} to the map
  $\tau\mapsto \paren[\big]{\mf{\theta}_{a,b}^2(\tau)}_{a,b}$ as in
  \cref{rem:16-thetas}. However, the image has~16 coordinates, so we would need
  to choose a subset of~$3$ coordinates (depending on~$\tau$) to project to, and
  compute an explicit neighborhood where this projection map is injective. The
  above algorithm avoids these technicalities.
\end{rem}

\subsection{Complexity and precision losses in the AGM method}
\label{subsec:agm-cost}

We analyze the complexity of \cref{algo:periods} in terms of a
quantity~$\Pi(f)$ defined as follows.

\begin{defn}
  \label{def:Pi}
  Let~$f\in \C[X]$ be a monic degree~$6$ polynomial with simple
  roots~$\rho_1,\ldots,\rho_6$. Let~$\Crv_f$ be the genus~$2$ curve with
  equation $y^2 = f(x)$, let~$j_1,j_2,j_3$ be the Igusa invariants of
  $\Jac(\Crv_f)$, and let~$\tau\in \Half_2$ be any period matrix
  of~$\Jac(\Crv_f)$. We then define
  \begin{displaymath}
    \Pi(f) = \log\max\Bigl\{2, \max_{1\leq i\leq 6} \abs{\rho_i}, \max_{1\leq i < j\leq 6}\abs{\rho_i-\rho_j}^{-1},
    \frac{\mf{h}_{10}(\tau)^3}{\det(\mf{dJ}(\tau))^{10}}\Bigr\}.
  \end{displaymath}
  Note that $\mf{h}_{10}(\tau)^3/ \det(\mf{dJ}(\tau))^{10}$ does not depend on the
  choice of~$\tau$ because~$\det(\mf{dJ})$ is a modular function of
  weight~$3$.\footnote{ If we adapt \cref{algo:periods} to not rely on the
    invertibility of~$\mf{dJ}(\tau)$, then one could presumably
    omit~$\mf{h}_{10}(\tau)^3/\det(\mf{dJ}(\tau))^{10}$ from the definition of~$\Pi(f)$.}
\end{defn}

\begin{thm}
  \label{thm:agm-cost}
  Let~$f\in \C[X]$ be a monic complex polynomial with simple roots. Then
  \cref{algo:periods}, on the input of~$f$ and its complex roots to
  precision~$N$, runs in time of~$O(\M(N)\log N)$ with a precision loss
  of~$O(\log N + \Pi(f) \log \Pi(f))$ bits, and outputs a period
  matrix~$\tau\in \Fund_2^\eps$ such that~$\abs{\tau} = O(\Pi(f))$.
\end{thm}

We divide the proof into several lemmas. First, we prove that $\abs{\tau}$ is
bounded above in terms of~$\Pi(f)$. Then, we analyze the precision losses in
\cref{algo:agm} in terms of~$\abs{\tau}$ and conclude on the cost of the whole
algorithm.

\begin{lem}
  \label{lem:size-periods} Let $\tau\in\Fund_2^\eps$ be any period matrix
  of~$\Jac(\Crv_f)$ as above. Then the quantities $\abs{\tau}$,
  $\log\max\{\mf{h}_{10}(\tau), \mf{h}_{10}(\tau)^{-1}\}$, and
  $\logp\abs{\mf{dJ}(\tau)}$ are all in $O(\Pi(f))$.
\end{lem}

\begin{proof} By \cite[Prop.~7.6]{strengComputingIgusaClass2014}, we have
  for~$\tau\in \Fund_2$:
  \begin{align*}
    \abs{\mf{\theta}_0(\tau) - 1} &< 0.405,\\
    \abs{\mf{\theta}_{01,00}(\tau)/2\exp(i\pi\tau_1/4) - 1} &< 0.348,\\
    \abs{\mf{\theta}_{10,00}(\tau)/2\exp(i\pi\tau_2/4) - 1} &< 0.348.
  \end{align*}
  Moreover, the absolute values of all theta constants are uniformly bounded
  above on~$\Fund_2$. Similar inequalities hold for~$\tau\in \Fund_2^\eps$ with
  slightly larger upper bounds. This already shows
  that~$\abs{\mf{h}_{10}(\tau)} = O(1)$. On the other hand, in Thomae's
  formula~\cite[Thm.~IIIa.8.1]{mumfordTataLecturesTheta1984}, the quotients
  $\mf{\theta}_{a,b}^4(\tau)/\mf{\theta}_0^4(\tau)$ are expressed as quotients of
  differences of roots of~$f$. Therefore,
  \begin{displaymath}
    \abs[\big]{\log \paren[\big]{\abs{\mf{\theta}_{a,00}(\tau)/\mf{\theta}_0(\tau)}}} = O(\Pi(f))
    \quad \text{for } a\in\{01,10\}.
  \end{displaymath}
  Thus~$\abs{\im(\tau_1)}$ and~$\abs{\im(\tau_2)}$ are both
  in~$O(\Pi(f))$. Since~$\tau\in \Fund_2^\eps$, we also
  have~$\det\im(\tau)\geq 0$ and~$\abs{\re(\tau)}\leq 1$, so
  overall~$\abs{\tau} = O(\Pi(f))$.

  Next, we show that $\log \abs{\mf{h}_{10}(\tau)^{-1}} = O(\Pi(f))$. Given the
  correspondence between Siegel modular forms and covariants of binary forms
  (see e.g.~\cite{igusaSiegelModularForms1962} or~§\ref{subsec:differentials}
  below), there exists a scalar~$\lambda\in \C^\times$ such
  that~$\mf{h}_k(\tau) = \lambda^k H_k(f)$ for $k\in \{4,6,10,12\}$,
  where~$H_4,\ldots,H_{12}$ are certain fixed polynomials in the coefficients
  of~$f$. Further,~$H_{10}(f)$ is the discriminant of~$f$. Since the modular
  forms $\mf{h}_k$ do not have common zeroes on~$\Fund_2^\eps$, and moreover~$\mf{h}_4$
  and~$\mf{h}_6$ are not cusp forms, there exists an absolute~$\eps>0$ such that
  $\max_{k}\abs{\mf{h}_k(\tau)} \geq \eps$. On the other
  hand~$\abs{H_k(f)} = O(\Pi(f))$ for each~$k$, so
  $\log \abs{\lambda^{-1}} = O(\Pi(f))$. Since
  $\log\abs{H_{10}(f)^{-1}} = O(\Pi(f))$ by definition of this quantity, we
  conclude that $\log\abs{\mf{h}_{10}(\tau)}^{-1} = O(\Pi(f))$.

  Finally, the entries of~$\mf{dJ}(\tau)$ are all of the
  form~$\mf{g}(\tau)/\mf{h}_{10}(\tau)^k$ where~$\mf{g}$ is a polynomial in
  (partial derivatives of) Siegel modular forms and~$k\in
  \Z_{\geq 0}$. Since~$\abs{\mf{g}(\tau)} = O(1)$, we have
  $\logp\abs{\mf{dJ}(\tau)} = O(\Pi(f))$.
\end{proof}

Next, we need explicit bounds on the convergence of the AGM sequences appearing
in step~\ref{step:agm-aux} of \cref{algo:agm}. To this end, we
use~\cref{prop:sign-choices} and the fact that theta constants converge quickly
as the smallest eigenvalue~$\lambda_1(\tau)$ of~$\im(\tau)$ tends to
infinity. Moreover, $\lambda_1(\gamma_k\tau)$ for each $1\leq k\leq 3$ is
bounded below when $\tau\in\Fund_2^\eps$ and $\abs{\tau}$ is not too large.

\begin{lem}
  \label{lem:theta-bounds}
  For every $\tau\in\Half_2$ such that $\lambda_1(\tau)\geq 1$, we
  have
  \begin{displaymath}
    \abs{\mf{\theta}_{00,b}(\tau)-1}
    < 4.18 \exp({-\pi\lambda_1(\tau)})
    \qquad \text{for } b\in \{0,1\}^2.\\
  \end{displaymath}
\end{lem}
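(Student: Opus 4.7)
The plan is to exploit the fact that for $j\in\{0,1,2,3\}$, the characteristic $(a,b)$ of $\theta_j$ has $a=0$, so that the $m=0$ term of the defining series~\eqref{eq:theta} equals~$1$. Isolating this term and taking absolute values (which absorbs the unit-modulus character $\exp(i\pi m^t b/2)$) yields
\begin{displaymath}
  \abs{\theta_j(\tau)-1} \leq \sum_{m\in\Z^2\setminus\{0\}}\exp\paren[\big]{-\pi\, m^t\im(\tau)\,m}.
\end{displaymath}
Since $m^t\im(\tau)m \geq \lambda_1(\tau)\norm{m}^2$ for every $m\in\R^2$, this upper bound is at most $\vartheta_3(i\lambda_1(\tau))^2 - 1$, where $\vartheta_3(iy) := \sum_{n\in\Z}\exp(-\pi y n^2)$ denotes the classical Jacobi theta value. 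The lemma therefore reduces to the one-variable estimate $\vartheta_3(iy)^2 - 1 \leq 4.18\,e^{-\pi y}$ for every $y\geq 1$.

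To verify this numerical bound, I would write $\vartheta_3(iy) = 1 + 2 e^{-\pi y}\, G(y)$ with $G(y)=\sum_{k\geq 0}\exp\paren[\big]{-\pi y\,k(k+2)}$, so that
\begin{displaymath}
  \vartheta_3(iy)^2 - 1 = 4 e^{-\pi y} G(y)\paren[\big]{1 + e^{-\pi y}G(y)}.
\end{displaymath}
For $y\geq 1$, the function $G$ is monotonically decreasing and $G(y)-1$ is bounded by the geometric-like tail $\sum_{k\geq 1}\exp(-\pi k(k+2))$, which is of order $e^{-3\pi}\approx 10^{-4}$; similarly $e^{-\pi y}G(y)\leq e^{-\pi}(1+10^{-4}) < 0.044$. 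Combining these estimates gives a constant strictly below~$4.18$. The only real obstacle is arithmetic bookkeeping of the explicit constant; the qualitative statement $\abs{\theta_j(\tau)-1} = \bigO(e^{-\pi\lambda_1(\tau)})$ is immediate from isolating the leading term of the series and dominating $m^t\im(\tau)m$ by the smallest eigenvalue of $\im(\tau)$.
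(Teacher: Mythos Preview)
Your proof is correct and follows essentially the same route as the paper: both isolate the $m=0$ term, bound the remaining sum via $m^t\im(\tau)m\geq\lambda_1(\tau)\norm{m}^2$, factor the resulting $2$-dimensional sum as $(\vartheta_3(i\lambda_1)^2-1)=4S^2+4S$ with $S=\sum_{n\geq 1}e^{-\pi\lambda_1 n^2}$, and then control the tail of~$S$ to extract the constant~$4.18$. Your quantities $e^{-\pi y}G(y)$ and the paper's $S$ are literally the same, so the only difference is notational packaging.
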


\begin{proof} This is similar
  to~\cite[Prop.~6.1]{dupontMoyenneArithmeticogeometriqueSuites2006}. Using the
  series expansion of $\mf{\theta}_{00,b}$, we obtain
  \begin{align*}
    |\mf{\theta}_{00,b}(\tau)-1|
    &\leq \sum_{n\in\Z^2\backslash\{0\}}\exp({-\pi n^t\im(\tau)n})
      \leq \sum_{n\in\Z^2\backslash\{0\}}\exp({-\pi \lambda_1(\tau) \norm{n}^2}).
  \end{align*}
  By splitting the plane into quadrants, we see that this last sum is
  equal to $4S^2 + 4S$, with
  \begin{displaymath}
    S = \sum_{n\geq 1} \exp({-\pi \lambda_1(\tau)n^2})
     \leq \dfrac{\exp({-\pi\lambda_1(\tau)})}{1- \exp({-3\pi\lambda_1(\tau)})}.
  \end{displaymath}
  Since $\lambda_1(\tau)\geq 1$, the conclusion follows.
\end{proof}

\begin{lem}
  \label{lem:gammatau}
  Let $\tau\in\Half_2$ and $\gamma\in\Sp_4(\Z)$. Then
  \begin{displaymath}
    \lambda_1(\gamma\tau) \geq  \dfrac{\det\im(\tau)}{8 \abs{\gamma}^2 \abs{\tau}(2\abs{\tau}+1)^2}.
  \end{displaymath}
\end{lem}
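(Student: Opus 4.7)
The plan is to bound $\lambda_1(\gamma\tau)$ from below by bounding the operator norm of $\im(\gamma\tau)^{-1}$ from above. Writing $\gamma = \tmat{a}{b}{c}{d}$ in $2\times 2$ blocks, I would start from the standard transformation formula
\begin{displaymath}
  \im(\gamma\tau)^{-1} = (c\conj\tau+d)\,\im(\tau)^{-1}\,(c\tau+d)^t,
\end{displaymath}
which is a direct consequence of $\im(\gamma\tau) = (c\tau+d)^{-t}\,\im(\tau)\,(c\conj\tau+d)^{-1}$. Since $\im(\gamma\tau)^{-1}$ is a real symmetric positive-definite $2\times 2$ matrix, its spectral norm equals its largest eigenvalue, which is exactly $1/\lambda_1(\gamma\tau)$.

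Next I would bound each of the three factors in the max-entry norm $|\cdot|$ used throughout the paper. For the outer factors, $|c\tau+d| \leq 2|c||\tau| + |d| \leq |\gamma|(2|\tau|+1)$, using the standard submultiplicativity constant $|AB| \leq 2|A||B|$ for $2\times 2$ matrices in the max norm; the same bound holds for $c\conj\tau+d$ since $|\conj\tau|=|\tau|$. For the middle factor, the $2\times 2$ adjugate formula gives $|\im(\tau)^{-1}| \leq |\im(\tau)|/\det\im(\tau) \leq |\tau|/\det\im(\tau)$. Applying $|ABC| \leq 4|A||B||C|$ then yields
\begin{displaymath}
  \bigl|\im(\gamma\tau)^{-1}\bigr| \;\leq\; 4\,|\gamma|^2(2|\tau|+1)^2\,\frac{|\tau|}{\det\im(\tau)}.
\end{displaymath}

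Finally, for a $2\times 2$ matrix the operator norm is at most twice the max-entry norm (e.g.\ via the Frobenius norm), so
\begin{displaymath}
  \frac{1}{\lambda_1(\gamma\tau)} \;=\; \bigl\|\im(\gamma\tau)^{-1}\bigr\|_{\mathrm{op}} \;\leq\; 8\,|\gamma|^2\,|\tau|(2|\tau|+1)^2/\det\im(\tau),
\end{displaymath}
and inverting gives the claimed inequality. There is no real obstacle here beyond careful bookkeeping of constants: the factor $8$ arises as $4 \cdot 2$, where the $4$ comes from two applications of $2\times 2$ submultiplicativity in the max norm and the $2$ converts max-entry norm to operator norm. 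The only nontrivial input is the transformation formula for $\im(\gamma\tau)$, which is classical.
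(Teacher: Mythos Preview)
Your proof is correct. Both arguments rest on the same transformation formula $\im(\gamma\tau) = (\gamma^*\tau)^{-t}\im(\tau)(\gamma^*\bar\tau)^{-1}$ and the same elementary norm inequalities for $2\times 2$ matrices, but they are organized differently: the paper bounds $\lambda_1(\gamma\tau)$ from below via the inequality $\lambda_1 \geq \det\im(\gamma\tau)/\Tr\im(\gamma\tau)$, then computes $\det\im(\gamma\tau)$ exactly and bounds $\Tr\im(\gamma\tau)$ by $8\abs{(\gamma^*\tau)^{-1}}^2\abs{\im(\tau)}$, with the $\abs{\det(\gamma^*\tau)}^2$ factors cancelling in the ratio; you instead invert the formula to work with $\im(\gamma\tau)^{-1}$ directly and bound its spectral radius through the max-entry norm. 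The factor~$8$ even decomposes the same way ($4$ from the triple product, $2$ from either the trace or the passage to operator norm). Your route is arguably a touch more direct since it avoids the separate determinant computation and the cancellation, while the paper's $\det/\Tr$ trick has the minor advantage of never needing to name the operator norm.
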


\begin{proof}
  We have
  \begin{displaymath}
    \lambda_1(\gamma\tau)
    \geq \dfrac{\det\im(\gamma\tau)}{\Tr\im(\gamma\tau)}.
  \end{displaymath}
  By \cite[eq.~(6.10)]{strengComputingIgusaClass2014}, we also have
  \begin{displaymath}
    \im(\gamma\tau) = (\gamma^* \tau)^{-t} \im(\tau) (\gamma^* \bar{\tau})^{-1}
  \end{displaymath}
  so that
  \begin{align*}
    \det\im(\gamma\tau)
    &= \dfrac{\det\im(\tau)}{\abs{\det(\gamma^*\tau)}^2},
    \quad \text{and}\\
    \Tr\im(\gamma\tau)
    &\leq 8 \abs{(\gamma^* \tau)^{-1}}^2 \abs{\im(\tau)}
      \leq 8 \dfrac{\abs{\gamma^* \tau}^2 \abs{\tau}}{\abs{\det(\gamma^*\tau)}^2}
      \leq 8\dfrac{\abs{\gamma}^2(2\abs{\tau}+1)^2\abs{\tau}}{\abs{\det(\gamma^*\tau)}^2}.
      \qedhere
  \end{align*}
\end{proof}

\begin{prop}
  \label{prop:dupont}
  Let~$\tau\in \Fund_2$. Then \cref{algo:agm}, given theta
  quotients at~$\tau$ to precision~$N$, runs in time~$O(\M(N)\log N)$ with
  a precision loss of~$O(\log N + \abs{\tau}\log\abs{\tau})$ bits.
\end{prop}

\begin{proof}
  By \cref{lem:gammatau},
  $\abs{\log \lambda_1(\gamma_k\tau)} = O(\log\abs{\tau})$ for each
  $1\leq k\leq 3$.  By \cref{lem:theta-bounds}, we deduce that the number of
  AGM steps to be done in step~\ref{step:agm-1} of \cref{algo:agm} is
  $O(\log\abs{\tau})$.  Up to this point, we performed $O(\log\abs{\tau})$
  elementary operations with complex numbers $x$ such that
  $\log\abs{x} = O(\abs{\tau})$. Therefore the total running time is
  \begin{displaymath}
    O(\M(N+\abs{\tau})\log\abs{\tau}),
  \end{displaymath}
  and the precision loss is $O(\abs{\tau}\log\abs{\tau})$ bits. If $N$ is
  larger than this precision loss, then everything we compute is given at a
  sufficiently high precision so that the good choice of square roots can
  indeed be determined at each step.

  Then, by~\cite[Prop.~7.2]{dupontMoyenneArithmeticogeometriqueSuites2006}, the
  number of AGM steps in step~\ref{step:agm-2} is~$O(\log N)$. The complex
  numbers~$x$ we manipulate satisfy $\log\abs{x} = O(1)$, so this step
  costs~$O(\M(N)\log N)$, and the precision loss is $O(\log N)$ bits.

  Finally, step~\ref{step:agm-check} of \cref{algo:agm} runs in
  time~$O(\M(N+\abs{\tau})$ involves a precision loss of~$O(\abs{\tau})$
  bits. Up to increasing the $O$ constant, the approximation of~$\tau$ we
  obtain is precise enough to land in~$\Fund_2^\eps$, so the algorithm
  succeeds.
\end{proof}

\begin{proof}[Proof of \cref{thm:agm-cost}.]
  In step~\ref{step:periods-agm} of \cref{algo:periods}, applying Thomae's
  formula involves $O(1)$ elementary operations (including square roots) on
  complex numbers~$x$ such that $\logp\abs{x} = O(\Pi(f))$ at precision~$N'$.
  By \cref{lem:elementary}, this is done in $O(\M(N'))$ binary operations with
  a precision loss of~$O(\Pi(f))$ bits. By \cref{prop:dupont}, \cref{algo:agm}
  in step~\ref{step:periods-loop} runs in time~$O(\M(N')\log N')$ with a
  precision loss of~$O(\log N' + \Pi(f)\log\Pi(f))$ bits. Since we keep
  doubling~$N'$, the total complexity is dominated by the very last loop where
  $N' = N$, so is $O(\M(N)\log N)$. Then, provided that~$N$ is larger than the
  precision loss, we will have computed the period matrix for the correct
  candidate~$z\in \mathcal{Z}$ with a precision loss
  of~$O(\log N + \Pi(f)\log\Pi(f))$ bits.

  The complexity of steps~\ref{step:periods-sieve}--\ref{step:periods-theta} is
  bounded above by the same quantity. Moreover, the list~$\mathcal{Z}$ after
  step~\ref{step:periods-theta} still contains the period matrix of the correct
  candidate $z\in \mathcal{Z}$.

  Finally, in step~\ref{step:periods-verif}, we have (using the notations from
  \cref{algo:agm-verification})
  \begin{displaymath}
    \log_2 \norm{f(\tau) - f(\tau_0)} \leq -N + O(\log N + \Pi(f)\log \Pi(f))
  \end{displaymath}
  given the checks performed in step~\ref{step:periods-theta}. By definition
  of~$\Pi(f)$, the same upper bound holds on~$\log_2\delta$. Thus the
  verification step succeeds if~$N \gg \log N + \Pi(f)\log\Pi(f)$. The output
  matrix~$\tau$ satisfies $\abs{\tau} = O(\Pi(f))$ by \cref{lem:size-periods}.
\end{proof}

As indicated above, in order to fully analyze step~\ref{step:siegel-tau} of
\cref{algo:numerical-siegel}, we also need to evaluate the complexity of
computing~$f$ and to provide an upper bound on~$\Pi(f)$. We defer these
questions to~§\ref{sec:precision}.

\subsection{Gaining information on differential forms}
\label{subsec:differentials}

The goal of this subsection is to explain how one can recover the information
contained in the so-called ``big'' period matrix of a p.p.~abelian surface
over~$\C$ from the data of a ``small'' period matrix (an element of~$\Half_2$),
for instance obtained from the AGM method. We first review the notion of a big
period matrix and its algorithmic uses.

Let~$A$ be a p.p.~abelian surface defined over~$\C$. Then $H_1(A,\Z)$ is a
rank~4 lattice inside $H_1(A,\C)\simeq \C^2$, and~$A$ is isomorphic
to~$\C^2/H_1(A,\Z)$. We can view the polarization on~$A$ as a symplectic
pairing~$E$ on~$H_1(A,\Z)$ (arising as the imaginary part of a hermitian form
on~$\C^2$) with elementary divisors $(1,1)$
\cite[§3.1]{birkenhakeComplexAbelianVarieties2004}. This means that there exist
bases of~$H_1(A,\Z)$, called \emph{symplectic bases}, in which the matrix
of~$E$ is
\begin{displaymath}
  \begin{pmatrix}
    0 & I_2 \\ -I_2 & 0
  \end{pmatrix}.
\end{displaymath}

\begin{defn}
  \label{def:big-periods}
  Let~$\phi = (\phi_1,\ldots,\phi_4)$ be a symplectic basis of~$H_1(A,\Z)$, and
  let $\omega = (\omega_1,\omega_2)$ be a basis of~$H^0(A,\Omega^1_A)$, the
  vector space of holomorphic differential forms on~$A$. The \emph{big period
    matrix} of~$A$ with respect to the bases~$\phi$ and~$\omega$ is
  \begin{displaymath}
    (\Omega_0 \ \Omega_1) = \paren[\Big]{\int_{\phi_k}{\omega_i}}_{\substack{1\leq i\leq 2\\1 \leq k\leq 4}}.
  \end{displaymath}
  Both~$\Omega_0$ and~$\Omega_1$ are complex $2\times 2$ matrices. The
  \emph{small period matrix} of~$A$ with respect to~$\phi$ is
  $\tau = \Omega_1^{-1}\Omega_0$. The matrix~$\tau$ is independent of the
  choice of~$\omega$ and lies in~$\Half_2$
  \cite[Prop.~8.1.1]{birkenhakeComplexAbelianVarieties2004}. Moreover,~$A$ is
  isomorphic to~$A(\tau) = \C^2/(\tau\Z^2+\Z^2)$.
\end{defn}

In other words, when we consider a (small) period matrix~$\tau\in \Half_2$ for
a p.p.~abelian surface~$A$, we implicitly make two choices: we choose a
symplectic basis~$\phi$ of $H_1(A,\Z)$, and fix the basis~$\omega$
of~$H^0(A,\Omega^1_A)$ that is dual to the first half of~$\phi$ with respect
to the integration pairing. The big period matrix of~$A$ in the bases~$\phi$
and~$\omega$ is then $(\tau\ I_2)$.

Providing a big period matrix is therefore equivalent to providing a small
period matrix \emph{and} the attached basis of differential forms on~$A$. This
input is required, for instance, to analytically compute isogenies between
Jacobians of genus~2 curves over number
fields~\cite[§3]{vanwamelenComputingAnalyticJacobian2006},~\cite[§6.2]{bookerDatabaseGenusCurves2016},
or when inverting the Hilbert embedding in the next subsection. One obtains a
big period matrix directly when computing periods of~$A$ using numerical
integration~\cite{molinComputingPeriodMatrices2019}, but it seems to be lost
with the AGM
method~\cite[Rem.~2.2.7]{costaRigorousComputationEndomorphism2019}.

We now explain how to recover this information a posteriori in quasi-linear
time.  We rely on the fundamental Siegel modular function~$\mf{\chi}_{-2,6}$
studied in \cite[§5]{cleryCovariantsBinarySextics2017}
and~\cite[Ex.~16.4]{clerySiegelModularForms2015}. This modular function is
vector-valued of weight~$\det^{-2}\otimes\Sym^6$: for background on
vector-valued Siegel modular forms, we refer
to~\cite{vandergeerSiegelModularForms2008}. For any~$n\in \Z_{\geq 0}$, we view~$\Sym^n$
as the following representation of~$\GL_2(\C)$ on the vector space~$\C_n[X]$ of
polynomials of degree at most~$n$: for all
$r = \paren[\big]{\begin{smallmatrix} a & b \\c & d
\end{smallmatrix}}\in \GL_2(\C)$ and $P\in \C_n[X]$, we set
\begin{displaymath}
  \Sym^n(r) P = (bX + d)^n P\paren[\Big]{\frac{aX + c}{bX + d}}.
\end{displaymath}
We also view an element of~$\C_n[X]$ as an $(n+1)$-dimensional complex
(vertical) vector in the basis~$(X^n,X^{n-1},\ldots,1)$. This allows us to also
consider~$\Sym^n(r)$ as an $(n+1)\times (n+1)$ matrix. In particular, we have
\begin{equation}
  \label{eq:sym2}
  \Sym^2(r) =
  \begin{pmatrix}
    a^2 & ab & b^2 \\
    2ac & ad+bc & 2bd \\
    c^2 & cd & d^2
  \end{pmatrix}.
\end{equation}
The modular function~$\mf{\chi}_{-2,6}$ takes values in~$\C_6[X]$. It has an expression in
terms of theta functions as follows:
\begin{equation}
  \label{eq:chi}
  \mf{\chi}_{-2,6} = \prod_{a^tb\,\text{even}} \frac{1}{\mf{\theta}_{a,b}^2}\ \cdot\prod_{a^tb\,\text{odd}}
    \paren[\Big]{\frac{\partial \mf{\theta}_{a,b}}{\partial z_1} x + \frac{\partial \mf{\theta}_{a,b}}{\partial z_2}}
\end{equation}
where the products run over all theta characteristics $(a,b)\in \{0,1\}^4$, and
$z_1$, $z_2$ denotes the two coordinates of~$\C^2$. Thus we see
that~$\mf{\chi}_{-2,6}(\tau)$ is well-defined precisely when~$\mf{h}_{10}(\tau)\neq 0$,
i.e.~when~$A(\tau)$ is the Jacobian of a genus~2 curve. Using~\eqref{eq:chi},
the modular function~$\mf{\chi}_{-2,6}$ can be evaluated in quasi-linear time in the
required precision at any point~$\tau\in \Fund_2$ where it is defined, but the
denominator prevents this complexity from being uniform.

Crucially, evaluating~$\mf{\chi}_{-2,6}$ at~$\tau\in \Half_2$ provides the equation
of a genus~$2$ curve attached to~$\tau$ with a precise control on differential
forms.

\begin{thm}[{\cite[§4]{cleryCovariantsBinarySextics2017}}]
  \label{thm:covariants}
  Let~$\tau\in \Half_2$ be such that~$\mf{h}_{10}(\tau)\neq 0$. Let~$\Crv_f$ be
  the curve with equation $y^2 = f(x)$ where
  $f = \mf{\chi}_{-2,6}(\tau)\in \C_6[X]$. Then~$\Jac(\Crv_f)$ admits~$\tau$ as
  a small period matrix. Moreover, there exists an isomophism
  $\Jac(\Crv_f) \simeq A(\tau)$ under which the basis $(x\, dx/y, dx/y)$
  of~$H^0\paren[\big]{\Jac(\Crv_f),\smash{\Omega^1_{\Jac(\Crv_f)}}}$
  corresponds to the basis of differential forms
  $(2\pi i\, dz_1, 2\pi i\, dz_2)$ on~$A(\tau)$.
\end{thm}

For a proof that the unidentified scalar factor
in~\cite{cleryCovariantsBinarySextics2017} is~$2\pi i$, see
\cite[§3]{kiefferComputingIsogeniesModular2025}.

\Cref{thm:covariants} provides a fruitful correspondence between vector-valued
Siegel modular forms on~$\Half_2$ and covariants of binary sextics.
For~$k,n \in \Z$ with~$n\geq 0$, let us define a \emph{fractional covariant} of
weight~$\det^k\otimes\Sym^n$ to be a rational map $G:\C_6[X]\to \C_n[X]$ (i.e.~a
polynomial in~$X$ whose coefficients are rational fractions in the coefficients
of $f\in \C_6[X]$) such that the following transformation rule holds: for every
$f\in \C_n[X]$ and $r\in \GL_2(\C)$,
\begin{displaymath}
  G\paren[\big]{\det\nolimits^{-2}\otimes\Sym^6(r) f} = \det\nolimits^k\otimes\Sym^n G(f).
\end{displaymath}
With this terminology, the map $G \mapsto G\circ \mf{\chi}_{-2,6}$ is a
bijection between fractional covariants of weight~$\det^k\otimes\Sym^n$ and
Siegel modular functions of weight~$\det^k\otimes\Sym^n$, for any pair~$(k,n)$.

We are specially interested in fractional covariants of weight $\Sym^2$, as
they relate to the derivatives of Igusa invariants. For each~$1\leq k\leq 3$,
we can define a Siegel modular function $\mf{dj}_k$ of weight~$\Sym^2$ by setting
\begin{equation}
  \mf{dj}_k = \frac{1}{2\pi i} \paren[\Big]{
    \frac{\partial \mf{j}_k}{\partial\tau_1}  x^2 + \frac{\partial \mf{j}_k}{\partial \tau_3} x + \frac{\partial \mf{j}_k}{\partial \tau_2}
  }.
\end{equation}
Up to reordering,~$\mf{dj}_k$ is one of the lines of the matrix~$\mf{dJ}$
defined in~\eqref{eq:dJ}.  The fractional covariants corresponding to these
modular forms are explicitly written down
in~\cite[§3]{kiefferComputingIsogeniesModular2025}. The algorithm can now be
stated as follows.

\begin{algo}
  \label{algo:differentials}
  \algoinput{A monic polynomial $f\in \C_6[X]$ encoding the genus~2
    curve~$\Crv_f:y^2 = f(x)$ equipped with the basis of differential forms
    $\omega_f = (x\, dx/y, dx/y)$; a period matrix~$\tau\in \Fund_2^\eps$
    of~$\Crv_f$ for~$\eps = 2^{-10}$; a working precision~$N$. We assume that
    $\mf{dJ}(\tau)$ is invertible.}  \algooutput{The big period
    matrix~$(\Omega_0 \ \Omega_1)$ of~$\Jac(\Crv_f)$ attached to the
    bases~$\phi$ and~$\omega_f$, where~$\phi$ denotes the symplectic basis
    of~$H_1(\Jac(\Crv_f),\Z)$ attached to~$\tau$; we have~$\tau = \Omega_1^{-1}\Omega_0$.}
  \begin{enumerate}
  \item \label{step:differentials-chi} Evaluate $g = \mf{\chi}_{-2,6}(\tau) \in \C_6[X]$.
  \item \label{step:differentials-sym2} Find a matrix~$r\in \GL_2(\C)$ such
    that $\det^{-2}\otimes\Sym^6(r) f = g$ as follows.
    \begin{enumerate}
    \item \label{step:differentials-cov} Let $G_k$ be the covariant attached
      to~$\mf{dj}_k$ for each $1\leq k\leq 3$.
    \item \label{step:differentials-sym2-mat} Compute~$\Sym^2(r)$ as
      $\paren[\big]{G_1(g) \ G_2(g)\ G_3(g)}\cdot \paren[\big]{G_1(f)\ G_2(f)\
        G_3(f)}^{-1}$ (these are~$3\times 3$ matrices.)
    \item \label{step:differentials-choose-a} Find which of the
      entries~$a,b,c,d$ of the $2\times 2$ matrix~$r$ has the largest magnitude
      by looking at the corners of the matrix~$\Sym^2(r)$. Without loss of
      generality, we assume that it is~$a$.
    \item \label{step:differentials-sqrt} Compute~$a$ by extracting an
      arbitrary square root of~$a^2$. Compute~$b$ and~$c$ as $ab/a$ and
      $2ac/2a$ respectively. Finally, compute~$d$ as $((ad + bc) - bc)/a$.
    \end{enumerate}
  \item \label{step:differentials-output} Output $\Omega_0 = 2\pi i r \tau$
    and~$\Omega_1 = 2\pi i r$.
  \end{enumerate}
\end{algo}

Recall the notation~$\Pi(f)$ from \cref{def:Pi}.

\begin{prop}
  \label{prop:differentials}
  \Cref{algo:differentials} is correct and runs in time is~$O(\M(N)\log N)$
  with a precision loss of $O(\Pi(f))$ bits. Its output satisfies
  $\max\{\abs{\Omega_1}, \abs{\Omega_1^{-1}}\} = O(\Pi(f))$.
\end{prop}

\begin{proof}
  First, step~\ref{step:differentials-sym2} correctly computes~$r$: in
  step~\ref{step:differentials-sym2-mat}, we have $\Sym^2(r) G_k(f) = G_k(g)$
  for every $1\leq k\leq 3$, and the following computations are valid
  by~\eqref{eq:sym2}.

  Since~$\det^{-2}\otimes\Sym^6(r) f = g$, there exists an
  isomorphism~$\psi_g: \Crv_f \to \Crv_g$ such that
  \begin{displaymath}
    \begin{pmatrix}
      \psi_g^*(x\,dx/y) \\ \psi_g^*(dx/y)
    \end{pmatrix}
    = r^{-t}
    \begin{pmatrix}
      x\, dx/y \\ dx/y
    \end{pmatrix}.
  \end{displaymath}
  After composing the isomorphism induced on Jacobians with the isomorphism
  given by \cref{thm:covariants}, we obtain an isomorphism
  $\psi_\tau: \Jac(\Crv_f) \to A(\tau)$ such that
  \begin{displaymath}
    \begin{pmatrix}
      \psi_\tau^*(2\pi i\, dz_1) \\ \psi_\tau^*(2\pi i \,dz_2)
    \end{pmatrix}
    = r^{-t}
    \begin{pmatrix}
      x\, dx/y \\ dx/y
    \end{pmatrix}.
  \end{displaymath}
  Since the big period matrix of~$\C^2/(\tau\Z^2 + \Z^2)$ in the
  bases~$\phi = \paren[\big]{\tau\tvec{1}{0}, \tau\tvec{0}{1}, \tvec{1}{0},
    \tvec{0}{1}}$ and~$\omega= (2\pi i\,dz_1, 2\pi i\,dz_2)$
  is~$(\tau\ I_2)$, the big period matrix of~$\Jac(\Crv_f)$ in the bases
  $\psi_\tau^{-1}(\phi)$ and~$\omega_f$ is indeed~$(2\pi i r\tau\ \ 2\pi i
  r)$. Thus \cref{algo:differentials} is correct.

  To begin the complexity analysis, we claim that
  $\log \max\{\abs{r}, \abs{r^{-1}}\} = O(\Pi(f))$. Indeed:
  \begin{itemize}
  \item We have $\logp \abs{G_k(f)} = O(\Pi(f))$ for every $1\leq k\leq 3$,
    because these covariants are polynomials in the coefficients of~$f$ divided
    by a power of the discriminant.
  \item We have $\logp \abs{G_k(g)} = O(\Pi(f))$ for every $1\leq k\leq 3$,
    because these covariants are the lines of the matrix~$\mf{dJ}(\tau)$ and we have
    \cref{lem:size-periods}.
  \item The determinant~$D(g) = \det \paren[\big]{G_1(g) \ G_2(g)\ G_3(g)}$ is
    precisely $\det \mf{dJ}(\tau)$, so it satisfies
    $\logp \abs{D(g)^{-1}} = O(\Pi(f))$ by definition of~$\Pi(f)$.
  \item The determinant of~$r$ satisfies
    $\log\max\{\abs{\det r}, \abs{\det r^{-1}}\} = O(\Pi(f))$. Indeed, we
    have
    $\log\max\{\abs{\mf{h}_{10}(\tau)}, \abs{\mf{h}_{10}(\tau)^{-1}}\} = O(\Pi(f))$
    by \cref{lem:size-periods}. On the other hand, the discriminant~$\Delta_f$ of~$f$
    satisfies $\log\max\{\Delta_f, \Delta_f^{-1}\} = O(\Pi(f))$ given its
    expression in terms of roots. Finally, we use that
    $\mf{h}_{10}(\tau) = \det(r)^{10} \Delta_f$.
  \item Finally, the analogously defined determinant~$D(f)$ satisfies
    $\logp\abs{D(f)^{-1}} = O(\Pi(f))$ as $D(g) = \det(r)^3 D(f)$.
  \end{itemize}
  This shows that
  $\log\max\{\abs{\Sym^2(r)}, \abs{\Sym^2(r)^{-1}}\} = O(\Pi(f))$, hence the claim.

  We now review the complexity of each step. Step~\ref{step:differentials-chi}
  has a running time of $O(\M(N)\log N)$ because~$\tau\in
  \Fund_2^\eps$. Step~\ref{step:differentials-cov} costs~$O(1)$. In
  step~\ref{step:differentials-sym2-mat}, evaluating the covariants can be done
  in time~$O(\M(N))$ with a precision loss of~$O(\Pi(f))$ bits, and the
  same estimates hold for the matrix inversion and
  multiplication. Step~\ref{step:differentials-choose-a} identifies an entry
  (say~$a$) such that $\abs{a^{-1}} = O(\Pi(f))$ in
  time~$O(\Pi(f))$. Then, step~\ref{step:differentials-sqrt} runs in
  time~$O(\M(N))$ and involves a further precision loss of~$\Pi(f)$
  bits. Finally, step~\ref{step:differentials-output} also runs in
  time~$O(\M(N))$ and involves a precision loss of~$O(\Pi(f))$ bits by
  \cref{lem:size-periods}.
\end{proof}

\begin{rem}
  \label{rem:rm-with-automorphisms}
  We could presumably adapt \cref{algo:differentials} to also work for period
  matrices~$\tau\in \Fund_2$ where~$\det(\mf{dJ}(\tau))$ is zero or vanishingly
  small: the only change would be in step~\ref{step:differentials-cov}, where
  one would consider other covariants of weight~$\Sym^2$, or possibly
  covariants of higher weights when~$\Crv_f$ has extra
  automorphisms. Generators for the algebra of covariants are listed
  in~\cite[p.\,296]{clebschTheorieBinaerenAlgebraischen1872}. The question
  would then be whether for any given~$\tau\in \Half_2$, one can find a triple
  of covariants that yields an invertible matrix.
\end{rem}

\subsection{The analytic representation of real endomorphisms}
\label{subsec:analytic-rep}

We now explain how to proceed to compute the analytic representation of real
endomorphisms (step~\ref{step:hilbert-rm} of~\cref{algo:numerical-hilbert})
using the results of the previous subsection. We focus on~$F = \Q(\sqrt{5})$,
but the idea applies to other quadratic fields equally.

We start with the following algorithm, which is based on writing the pullback
of~$\mf{\chi}_{-2,6}$ via the Hilbert embedding in terms of Hilbert modular
forms. We need the following definition, taken
from~\cite[§3]{kiefferComputingIsogeniesModular2025}. Consider a genus 2 curve
$\Crv_f: y^2 = f(x)$ whose Jacobian has~RM by~$\Z_F$, in other words we have an
embedding~$\iota:\Z_F\embed \End(\Jac(\Crv_f))$ such that every element
of~$\iota(\Z_F)$ is invariant under the Rosati involution. We say that the
equation~$f$ is \emph{Hilbert-normalized} if for every~$\lambda\in \Z_F$, the
matrix of~$\iota(\lambda)^*$ on~$H^0(\Jac(\Crv_f), \Omega^1_{\Jac(\Crv_f)})$, written in the
basis $(x \,dx/y, dx/y)$ is
\begin{displaymath}
  \mat{\lambda}{0}{0}{\conj{\lambda}}.
\end{displaymath}
Hilbert-normalized curve equations can be constructed as
follows~\cite[§7]{kiefferComputingIsogeniesModular2025}.

\begin{algo}
  \label{algo:hilbert-construction}
  \algoinput{A pair of Gundlach invariants $(g_1,g_2)$ over~$\C$ for
    $F = \Q(\sqrt{5})$; a working precision~$N$.}  \algooutput{A polynomial
    $f\in \C_6[X]$ with the following properties: $\Jac(\Crv_f)$ has RM
    by~$\Z_F$; the Gundlach invariants of~$\Jac(\Crv_f)$ are $(g_1,g_2)$;
    and~$f$ is Hilbert-normalized for a certain choice of
    embedding~$\iota:\Z_F\embed \End(\Jac(\Crv_f))$.}
  \begin{enumerate}
  \item Pick values~$G_2,F_6,F_{10}$ in~$\C$ that realize the Gundlach
    invariants~$g_1,g_2$. By~\eqref{eq:gundlach}, we can set~$G_2 = 1$,
    $F_{10} = 1/g_1$ and~$F_6 = g_2/g_1$.
  \item Let~$f_3$ be any square root of $4 F_{10} F_6^2$.
  \item Let~$f_1 = 1$ and~$f_5 = \dfrac{36}{25} F_{10} F_6^2 - \dfrac{4}{5} F_{10}^2 G_2$.
  \item Determine the quantities~$f_0,f_6$ using the quadratic relations
    \begin{align*}
      f_0 f_6 &= \dfrac{-4}{25} F_{10} F_6^2 + \dfrac{1}{5} F_{10}^2 G_2, \quad\text{and}\\
      f_3 \bigl(f_0^2 f_5^3 + f_1^3 f_6^2 \bigr) &= 123 F_{10}^3 F_6 -
      \dfrac{32}{25} F_{10}^2 F_6^2 G_2^2 + \dfrac{288}{125} F_{10}
      F_6^4 G_2 - \dfrac{3456}{3125} F_6^6.
    \end{align*}
  \item Output $f = \sum_{i=0}^6 f_i x^i$.
  \end{enumerate}
\end{algo}

We use these normalized curve equations as a key tool to compute the analytic
representation of real endomorphisms. Recall the element~$\xi\in \Z_F$ used
in \cref{algo:numerical-hilbert}.

\begin{algo}
  \label{algo:hilbert-rm}
  \algoinput{A pair of Gundlach invariants $(g_1,g_2)$ over~$\C$
    for~$F = \Q(\sqrt{5})$ corresponding to Igusa invariants~$(j_1,j_2,j_3)$; a
    period matrix~$\tau\in \Fund_2^\eps$ for~$\eps = 2^{-10}$ whose Igusa
    invariants are~$(j_1,j_2,j_3)$; a working precision~$N$.}  \algooutput{The
    analytic representation~$\rho_A(\xi)$ of~$\xi$ attached to the period
    matrix~$\tau$.}

  \begin{enumerate}
  \item \label{step:hilbert-construction} Let~$f$ be the output of
    \cref{algo:hilbert-construction} on the input of~$(g_1,g_2)$ and~$N$.
  \item \label{step:hilbert-monic} Multiply~$f$ by a scalar to make it monic of
    degree~6.
  \item \label{step:hilbert-r} Let~$r\in \GL_2(\C)$ be the matrix such that the
    output of \cref{algo:differentials} on the input of~$f,\tau$ and~$N$
    is~$(\Omega_0, \Omega_1)$ with~$\Omega_1 = 2\pi i r$.
  \item \label{step:hilbert-output} Output
    $\rho_A(\xi) = r \mat{\xi}{0}{0}{\conj{\xi}} r^{-1}$.
  \end{enumerate}
\end{algo}

\begin{prop}
  \label{prop:hilbert-rm}
  \Cref{algo:hilbert-rm} is correct. Moreover, there exist an explicitly
  computable polynomial $Q\in \Z[G_1,G_2]$ such that the following
  holds. For~$(g_1,g_2) \in \C$, write
  $ \Lambda(g_1,g_2) = \log \max\{\abs{g_1}, \abs{g_2}, \abs{Q(g_1,g_2)}^{-1}\}
  $.  Then, given $(g_1,g_2)$ to precision~$N$, the algorithm runs in time
  $O(\M(N)\log N)$ with a precision loss of $O(\Lambda(g_1,g_2))$ bits.
\end{prop}

\begin{proof}
  As in the proof of \cref{prop:differentials}, there exists an isomorphism
  $\psi_\tau:\Jac(\Crv_f) \to A(\tau)$ such that
  \begin{displaymath}
    \begin{pmatrix}
      \psi_\tau^*(2\pi i\, dz_1)\\ \psi_\tau^*(2\pi i\, dz_2)
    \end{pmatrix}
    = r^{-t}
    \begin{pmatrix}
      x \, dx/y \\ dx/y.
    \end{pmatrix}
  \end{displaymath}
  Step~\ref{step:hilbert-construction} of the algorithm ensures that the matrix
  of~$\xi^*$ (after choosing the correct RM embedding) in the basis
  $(x\, dx/y, dx/y)$ is the diagonal matrix with coefficients
  $\xi,\conj{\xi}$. Consequently, considering now~$\xi$ as an endomorphism
  of~$A(\tau)$, the matrix of~$\xi^*$ in the basis
  $(2\pi i\, dz_1, 2\pi i dz_2)$ is
  \begin{displaymath}
    r^{-t} \mat{\xi}{0}{0}{\conj{\xi}} r^{t},
  \end{displaymath}
  and~$\rho_A(\xi)$ is the transpose of this product. Thus
  \cref{algo:hilbert-rm} is correct.

  We now analyze its complexity. In step~\ref{step:hilbert-construction},
  \cref{algo:hilbert-construction} performs $O(1)$ elementary operations on
  complex numbers constructed from~$(g_1,g_2)$. For a good choice of~$Q$ which
  can be made explicit, these complex numbers are all nonzero. Then
  steps~\ref{step:hilbert-construction}--\ref{step:hilbert-monic} run in
  time~$O(\M(N))$ with a precision loss of~$O(\Lambda(g_1,g_2))$ bits. In
  step~\ref{step:hilbert-r}, we claim that $\Pi(f)$ (in the notation of
  \cref{algo:differentials}) is in $O(\Lambda(g_1,g_2))$ for a good choice
  of~$Q$: indeed, the polynomial~$Q$ only has to include the equation of the
  vanishing locus of the modular function~$\det(\mf{dJ})$ in terms of the Gundlach
  invariants. Therefore, steps~\ref{step:hilbert-r}--\ref{step:hilbert-output}
  run in time~$O(\M(N)\log N)$ with a precision loss of $O(\Lambda(g_1,g_2))$
  bits by \cref{prop:differentials}.
\end{proof}

\section{Complexity analysis of the numerical evaluation algorithms}
\label{sec:precision}

This section contains the analysis of complexity and precision losses in
\cref{algo:numerical-siegel,algo:numerical-hilbert}, building on the results
of~§\ref{sec:agm}. The remaining steps to study are Mestre's algorithm,
computing Weierstrass points, and computing the modular equations from the
individual values of modular forms in step~\ref{step:siegel-prodtree}
(resp.~step~\ref{step:hilbert-prodtree}) of these algorithms.

\subsection{Mestre's algorithm}
\label{subsec:roots}

Here, our complexity analysis relies on the fact that the input values of Igusa
invariants $j_1,j_2,j_3$ in~$\C$ arise from a number field. Recall the
definition of the (absolute logarithmic) height of (tuples of) number field
elements~\cite[§B.2]{hindryDiophantineGeometry2000}, denoted by~$h$, and the
notation~$\Pi(f)$ from \cref{def:Pi}.

\begin{thm}
  \label{thm:analytic-mestre}
  Let~$L$ be a number field of degree~$d_L$ over~$\Q$ and let
  $j_1,j_2,j_3\in L$ be such that the modular function~$\det(\mf{dJ})$ does not
  vanish at $(j_1,j_2,j_3)$. Let~$H = h(j_1,j_2,j_3)$. Then, for every complex
  embedding~$\mu$ of~$L$, on the input of $\mu(j_1),\mu(j_2),\mu(j_3)$ as
  complex numbers to precision~$N_\mu\in \Z_{\geq 0}$, one can compute a
  degree~6 polynomial $f_\mu\in \C[X]$ with simple roots such that the
  genus~$2$ curve $\Crv_{f_\mu}: y^2 = f_\mu(x)$ has Igusa invariants
  $(\mu(j_1),\mu(j_2),\mu(j_3))$.  Let~$B_\mu$ be the precision loss in this
  algorithm. Then we further have
  \begin{displaymath}
    \textstyle \sum_{\mu} \Pi(f_\mu) = O(d_LH) \quad\text{and}\quad \sum_\mu B_\mu = O(d_LH).
  \end{displaymath}
  The cost of this computation for each~$\mu$ is $O(\M(N_\mu))$ binary
  operations.
\end{thm}

Note that the hypothesis $\det(\mf{dJ})\neq 0$ guarantees that the genus~2
curves~$\Crv_{f_\mu}$ do not have extra automorphisms.

\begin{proof}
  When running Mestre's algorithm~\cite{mestreConstructionCourbesGenre1991}
  separately in each embedding~$\mu$, we are in fact in the following
  situation. There exist
  \begin{itemize}
  \item a quadratic extension $L'/L$,
  \item a degree~6 polynomial $f\in L'[X]$ such that the curve $y^2 = f(x)$ has
    Igusa invariants $(j_1,j_2,j_3)\in L^3$; and
  \item for each complex embedding~$\mu$ of~$L$, an extension~$\mu'$ of~$\mu$
    to~$L'$,
  \end{itemize}
  such that in each embedding~$\mu$ as above, Mestre's algorithm
  returns the polynomial $\mu'(f)$.

  If we were to run Mestre's algorithm in~$L'$, we would perform $O(1)$
  elementary operations (sums, products, and square roots) of elements in~$L'$
  of height $O(H)$. If~$x\in L'$ denotes any of the $O(1)$
  nonzero elements of~$L'$ encountered during Mestre's algorithm, we would then
  have by the definition of heights:
  \begin{displaymath}
    \textstyle \sum_{\mu} \log \max\{\abs{\mu'(x)}, \abs{\mu'(x)^{-1}}\} = O(d_LH).
  \end{displaymath}
  We deduce that $\sum_\mu \Pi(f_\mu) = O(d_LH)$ and $\sum_\mu B_\mu = O(d_LH)$ by
  \cref{lem:elementary}. The algorithm runs in time~$O(\M(N_\mu))$ by the same
  lemma.
\end{proof}

Precision losses of a similar shape to those in \cref{thm:analytic-mestre} will
repeatedly appear in the rest of the paper. Therefore, we adopt:

\begin{conv}
  \label{conv:Bmu}
  The notation $B_\mu$, where~$\mu$ runs through the complex embeddings of~$L$,
  always refers to a collection of nonnegative integers such that
  $\sum_\mu B_\mu = O(d_LH)$. However, the precise values of these
  integers~$B_\mu$ may vary from one occurrence to the next.
\end{conv}

In order to apply \cref{thm:agm-cost}, we need to compute the six complex roots
of the polynomials output by Mestre's algorithm. Specifying an efficient
root-finding algorithm in detail is unfortunately beyond the scope of this
paper, so we rely on the following black box.

\begin{thm}
  \label{thm:complex-roots}
  Let~$f\in \C[X]$ be a monic polynomial of degree~$n$ with simple roots, and
  let~$\delta > 0$ be a lower bound on the distance between any two distinct
  roots of~$f$. Then, given an approximation of~$f$ to precision~$N$, one can
  compute approximations of each complex roots of~$f$ with a precision loss of
  $\Otilde(n^2 + n\logp\abs{f} + \logp \delta^{-1})$ bits in
  time~$\Otilde(n N)$.
\end{thm}

\begin{proof}[Proof (sketch)]
  One can isolate the roots of~$f$ in time
  $\Otilde(n^3+n^2\logp\abs{f} + n\logp \delta^{-1})$ and with the required
  precision loss using the algorithm
  of~\cite{beckerNearoptimalSubdivisionAlgorithm2018}. After that, one can set
  up a Newton scheme around a multipoint evaluation algorithm (see
  e.g.~\cite[§10.1]{vonzurgathenModernComputerAlgebra2013} in an exact setting;
  the precision losses in such an algorithm can also be controlled in terms
  of~$n$ and~$\delta$) to refine the roots of~$f$ to precision~$N$ in
  time~$\Otilde(nN)$.
\end{proof}

\begin{cor}
  \label{cor:periods} Keep the hypotheses of \cref{thm:analytic-mestre}. Then,
  for every complex embedding~$\mu$ of~$L$, on the input
  of~$\mu(j_1),\mu(j_2),\mu(j_3)$ to precision~$N_\mu\in \Z_{\geq 0}$, one can
  compute~$\tau\in \Fund_2^\eps$ where $\eps = 2^{-10}$ whose Igusa invariants
  are $\mu(j_1),\mu(j_2),\mu(j_3)$ and such that $\abs{\tau} = B_\mu$, in time
  $O(\M(N)\log N)$ with a precision loss of~$B_\mu\log B_\mu + O(\log N)$ bits.
\end{cor}

\begin{proof}
  Combine \cref{thm:analytic-mestre},
  \cref{thm:complex-roots} with~$n=6$, and \cref{thm:agm-cost}.
\end{proof}

\subsection{Product trees}

In order to make products of polynomials of degree~1 as in
step~\ref{step:siegel-prodtree} of \cref{algo:numerical-siegel}, it is standard to
use product trees as in the following algorithm.

\begin{algo}
  \label{algo:prodtree}
  \algoinput{Complex numbers $x_k, y_k, z_k$ for $0\leq k \leq d-1$; a working
    precision~$N$.}  \algooutput{The polynomials
    $P = \prod_{k=0}^{d-1} (x_k X + y_k)$ and
    $Q = \sum_{k=0}^{d-1} z_k \prod_{j\neq k}(x_j X + y_j)$.}

  \begin{enumerate}
  \item Let~$n = \ceil{\log_2(d)}$. For~$d\leq k < 2^n$, we
    set~$x_k = 0, y_k = 1$ and~$z_k = 0$.
  \item Construct two lists $L_{P,0}$ and~$L_{Q,0}$ of length~$2^n$ such
    that~$L_{P,0}[k] = x_k X + y_k$ and $L_{Q,0}[k] = z_k$ for each~$k$.
  \item \label{step:prodtree-rec} For each~$1\leq m\leq n$, construct two lists~$L_{P,m}$ and~$L_{Q,m}$
    of length~$2^{n-m}$ as follows: for each $0\leq k < 2^{n-m}$, set
    \begin{align*}
      L_{P,m}[k] &= L_{P,m-1}[2k] \cdot L_{P,m-1}[2k+1],\\
      L_{Q,m}[k] &= L_{P,m-1}[2k+1] \cdot L_{Q,m-1}[2k] + L_{P,m-1}[2k] \cdot L_{Q,m-1}[2k+1].
    \end{align*}
  \item Output $P = L_{P,n}[0]$ and~$Q = L_{Q,n}[0]$.
  \end{enumerate}
\end{algo}

Of course, we can adapt \cref{algo:prodtree} to compute several polynomials of
type~$Q$.

\begin{prop}
  \label{prop:prodtree}
  \Cref{algo:prodtree} is correct. If~$B\leq 1$ and~$C\leq 1$ are real numbers such that
  \begin{displaymath}
    \logp\abs{x_k} \leq B, \quad\logp\abs{y_k}\leq B\quad \text{and} \quad \logp\abs{z_k}\leq C
    \quad \text{ for all } k,
  \end{displaymath}
  and the complex numbers~$x_k,y_k,z_k$ are given to precision~$N$, then the
  algorithm runs in time $\Otilde(dN+dC+d^2B)$ with a precision loss
  of $O(C+dB)$ bits.
\end{prop}

\begin{proof}
  If we let~$\mathcal{S}_{m,k} = \{2^m k, 2^m k + 1, \ldots, 2^mk+2^m-1\}$, then we have
  by induction
  \begin{displaymath}
    L_{P,m}[k] = \prod_{j \in \mathcal{S}_{m,k}} (x_j X + y_j)
    \quad \text{and} \quad
    L_{Q,m}[k] = \sum_{j \in \mathcal{S}_{m,k}} z_j \prod_{l\in \mathcal{S}_{m,k}\setminus\{j\}} (x_l X + y_l).
  \end{displaymath}
  In particular, $L_{P,m}[k]$ has degree at most~$2^m$ and satisfies
  $\logp\abs[\big]{L_{P,m}[k]} = O(2^m B)$. Similarly, $L_{Q,m}[k]$ has degree at most~$2^m - 1$ and satisfies
  $\logp\abs[\big]{L_{Q,m}[k]} = O(C + 2^m B)$.

  In step~\ref{step:prodtree-rec} for each~$1\leq m\leq n$, we compute one
  product between elements of~$L_{P,m-1}$. This
  costs~$\Otilde(2^m (N + 2^m B))$ binary operations and implies a precision
  loss of~$O(2^m B)$ bits by \cref{lem:elementary}. Simiarly, the products
  between elements of~$L_{P,m-1}$ and~$L_{Q,m-1}$ can be computed to precision
  $N - O(C + 2^m B)$ using a total of $\Otilde(2^mN+2^mC+2^{2m}B)$ binary
  operations. Therefore the total cost of this step for any each~$m$ is
  $\Otilde(dN + dC + d^2B)$. Summing over~$m$, the total running time of the
  algorithm is $\Otilde(dN+dC+d^2B)$.
\end{proof}

\subsection{Cost of numerical evaluation}

For the sake of brevity, we focus on the more involved Hilbert case
(\cref{algo:numerical-hilbert}), and simply point out the differences in the
Siegel case. We fix the quadratic field $F = \Q(\sqrt{5})$, and a totally
positive $\beta\in \Z_F$ of prime norm $\ell\in \Z$.  Recall
that~$P_{\beta,k}$ and~$Q_\beta$ denote the numerators and denominator of
Hilbert modular equations~$\Psi_{\beta,k}$ for~$1\leq k\leq 2$ as constructed
in~§\ref{subsec:hilbert-denom}, and recall \cref{conv:Bmu} for the notation $B_\mu$.

\begin{prop}
  \label{prop:cost-hilbert}
  There exists an explicitly computable polynomial $Q\in \Z[G_1,G_2]$ such that
  the following holds.  Let~$L$ be a number field of degree~$d_L$,
  and choose~$g_1,g_2\in L$ such that $Q(g_1,g_2) \neq 0$. Let~$H=
  h(g_1,g_2)$. Then for each complex embedding~$\mu$ of~$L$, given~$\mu(g_1)$
  and~$\mu(g_2)$ to precision~$N$, \cref{algo:numerical-hilbert} computes
  $Q_\beta(\mu(g_1),\mu(g_2))\in \C$ and $P_{\beta,k}(\mu(g_1),\mu(g_2), X)\in \C[X]$ for
  $1\leq k\leq 2$ in $\Otilde(\ell N)$ binary operations with a precision loss of
  $B_\mu\log B_\mu + O(\log N + \ell B_\mu + \ell \log \ell)$ bits.
\end{prop}

\begin{proof} The complexity and precision losses in each step of
  \cref{algo:numerical-hilbert} are as follows.
  \begin{enumerate}
  \item This step costs $\Otilde(N)$ binary operations and involves a precision
    loss of $O(B_\mu)$ bits, as in the proof of \cref{thm:analytic-mestre},
    because $(j_1,j_2,j_3)$ are elements of~$L$ of height $O(H)$.
  \item \label{step:proof-tau} We may assume $\det(\mf{dJ})$ does not vanish at
    $(j_1,j_2,j_3)$ if~$Q$ is well chosen. By \cref{cor:periods}, this step
    costs $\Otilde(N)$ binary operations with a precision loss of
    $B_\mu\log B_\mu + O(\log N)$ bits, and outputs $\tau\in \Fund_2^\eps$ such
    that $\abs{\tau}\leq B_\mu$.
  \item We may assume that~$Q$ is a multiple of the polynomial~$Q$ of
    \cref{prop:hilbert-rm}. We then have
    $\sum_\mu\Lambda(\mu(g_1),\mu(g_2)) = O(dH)$, so \cref{algo:hilbert-rm}
    costs $\Otilde(N)$ binary operations with a precision loss of $B_\mu$ bits.
  \item This steps involves only elementary operations. Given the upper bound
    on $\abs{\tau}$ from step~\ref{step:proof-tau} and \cref{lem:elementary},
    this steps costs $\Otilde(N)$ binary operations, involves a
    precision loss of $O(B_\mu)$ bits, and outputs integers $a,b,c,d,e$ of
    absolute value at most~$B_\mu$.
  \item By \cref{prop:birkenhake}, this step is quasi-linear in
    $\log\max\{\abs{a},\abs{b},\abs{c},\abs{d},\abs{e}\}$.
  \item \label{step:proof-loop}
    \begin{enumerate}
    \item Note that $\log\abs{\mathrm{Hilb}_F(\gamma)\eta} = \log\ell + B_\mu$. By
      \cref{prop:change-rep}, the cost of this step is quasi-linear in
      $\log\ell + B_\mu$.
    \item \label{step:proof-red} Since the lower left block of
      $\zeta_\gamma \mathrm{Hilb}_F(\gamma)\eta$ is zero, its lower right block is a
      $2\times 2$ integral matrix of determinant~$\ell$. Therefore, the matrix
      $\tau' = \zeta_\gamma \mathrm{Hilb}_F(\gamma)\eta\tau$ satisfies
      $\Xi(\tau') = O(\log\ell)$ and $\Lambda(\tau') = B_\mu + O(\log \ell)$,
      in the notation of~§\ref{subsec:fund}. By \cref{thm:reduction}, computing
      $\eta_\gamma$ costs $\Otilde(N\log\ell)$ binary operations with a
      precision loss of $B_\mu + O(\log\ell)$.
    \item By~\cite{kiefferCertifiedEwtonSchemes2022}, evaluating theta
      constants costs $\Otilde(N)$ binary operations with a precision loss of
      $O(1)$ bits. (Recall that in~§\ref{subsec:numerical-siegel}, we
      postulated that this algorithm also applies on~$\Fund_2^\eps$ for
      $\eps=2^{-10}$.) The same estimates hold for the evaluation
      of~$\mf{h}_4,\ldots,\mf{h}_{12}$ from theta constants. Given the estimates on
      $\Lambda(\tau')$ and $\abs{\eta_\gamma\zeta_\gamma}$ obtained in
      step~\ref{step:proof-red}, applying the transformation law for the matrix
      $(\eta_\gamma\zeta_\gamma)^{-1}\in \Sp_4(\Z)$ costs
      $\Otilde(N)$ binary operations and a precision loss
      of $B_\mu + O(\log \ell)$ bits. The final quantities $\mf{h}_4,\ldots,\mf{h}_{12}$
      we obtain after the transformation satisfy $\abs{\mf{h}_k} = B_\mu + O(\log\ell)$.
    \item This step costs $\Otilde(N)$ binary operations with a precision loss
      of $B_\mu + \log\ell$ bits by the previous estimates.
    \end{enumerate}
    \noindent
    Since the number of matrices~$\gamma\in C_\beta^{\mathrm{sym}}$ considered in this
    step is~$O(\ell)$, the total cost of step~\ref{step:proof-loop} is
    $\Otilde(\ell N)$ binary operations with a precision loss of
    $B_\mu + O(\log\ell)$ bits.
  \item As in step~\ref{step:proof-loop}, this step costs $\Otilde(B_\mu + N)$
    binary operations and involves a precision loss of $B_\mu$ bits.
  \item We apply \cref{prop:prodtree} with $B = C = B_\mu + O(\log\ell)$, and
    $d = O(\ell)$, so this steps runs in time $\Otilde(\ell N)$ with a precision
    loss of $\ell B_\mu + O(\ell\log\ell)$ bits.
  \item Using fast exponentiation, this step costs $\Otilde(N \log\ell)$ binary
    operations with a precision loss of $\ell B_\mu$ bits by
    \cref{lem:elementary}. We have
    $\log\max\{\abs{\lambda},\abs{\lambda}^{-1}\} = O(\ell B_\mu)$.
  \item Since we multiply $O(\ell)$ complex numbers by~$\lambda$, the total
    cost of this step is $\Otilde(\ell N)$ binary operations with a precision
    loss of $O(\ell B_\mu)$ bits.
  \end{enumerate}
  Summing the contributions of each step concludes the proof.
\end{proof}

In the Siegel case, we essentially omit
steps~\ref{step:hilbert-j},~\ref{step:hilbert-rm},~\ref{step:hilbert-abcde}
and~\ref{step:hilbert-eta} from \cref{algo:numerical-hilbert}, and we only
require that the modular function $\det(\mf{dJ})$ does not vanish at
$(j_1,j_2,j_3)$. On the other hand, we loop over more matrices~$\gamma$, as
$\# C_\ell = O(\ell^3)$. As a result, we obtain:

\begin{prop}
  \label{prop:cost-siegel}
  Let~$L$ be a number field of degree~$d_L$, and choose~$j_1,j_2,j_3\in L$ such
  that $\det(\mf{dJ})$ does not vanish at $(j_1,j_2,j_3)$. Then for each
  complex embedding~$\mu$ of~$L$, given $\mu(j_1),\mu(j_2),\mu(j_3)$ to
  precision~$N$, \cref{algo:numerical-siegel} computes
  $Q_\ell(\mu(j_1),\mu(j_2),\mu(j_3)) \in \C$ and
  $P_{\ell,k}(\mu(j_1),\mu(j_2),\mu(j_3), X)\in \C[X]$ for $1\leq k\leq 3$ in
  $\Otilde(\ell^3 N)$ binary operations with a precision loss of
  $B_\mu\log B_\mu + O(\log N + \ell^3 B_\mu + \ell^3\log\ell)$ bits.
\end{prop}

We further state, but do not prove, the following result on the numerical
evaluation of derivatives of modular equations following the approach
of~§\ref{subsec:numerical-siegel} and~§\ref{subsec:numerical-hilbert}.

\begin{prop}
  \label{prop:numerical-derivatives}
  \begin{enumerate}
\item In the setting of \cref{prop:cost-hilbert}, one can further compute
  \begin{displaymath}
    Q_\beta(\mu(g_1),\mu(g_2))^2 \,\frac{\partial \Psi_{\beta,k}}{\partial G_n}(\mu(g_1),\mu(g_2),X) \in \C[X]
  \end{displaymath}
  for $1\leq k, n\leq 2$ with the same time complexity and precision losses (up
  to increasing the implied constants in the~$O$ estimates.)
\item In the setting of \cref{prop:cost-siegel}, one can further compute
  \begin{displaymath}
    Q_\ell(\mu(j_1),\mu(j_2),\mu(j_3))^2 \,\frac{\partial \Psi_{\ell,k}}{\partial J_n}(\mu(j_1),\mu(j_2),\mu(j_3),X) \in \C[X]
  \end{displaymath}
  for $1\leq k, n\leq 3$ with the same time complexity and precision losses.
  \end{enumerate}
\end{prop}

\section{Complexity analysis of the main algorithms}
\label{sec:proofs}

The complexity analysis of the numerical evaluation algorithms
in~§\ref{sec:precision} was agnostic of how the number field~$L$ is presented:
we only used that the Igusa invariants $(j_1,j_2,j_3)$ were algebraic numbers
of bounded height and degree over~$\Q$. However, in the main
algorithm~\ref{algo:skeleton}, actual number field elements are manipulated
twice: when evaluating $\mu(j_1),\mu(j_2),\mu(j_3)$ for all embeddings~$\mu$ in
step~\ref{step:skeleton-embed}, and when recovering the coefficients of modular
equations as number field elements in step~\ref{step:skeleton-reconstruct}. As
explained in~§\ref{subsec:io}, we consider two possible settings:
\begin{enumerate}
\item \label{case:proof-ff} In the first description $L=\Q(\alpha)$ where~$\alpha$ is a root of a
  certain monic polynomial $P_L\in \Z[X]$, and the input invariants belong to
  $\Z[\alpha]$. Elements in this order as represented as polynomials
  in~$\alpha$ of degree at most~$d_L-1$.
\item \label{case:proof-nf} In the second description, we assume that a
  $\Z$-basis $(\alpha_1,\ldots,\alpha_{d_L})$ of~$\Z_L$ is known, and the input
  invariants are presented as quotients of elements in~$\Z_L$. Elements
  of~$\Z_L$ are represented as linear combinations of $\alpha_1,\ldots,\alpha_{d_L}$.
\end{enumerate}

In case~\ref{case:proof-ff}, we obtain complexity estimates in terms of
$\abs{P_L}$ and the largest coefficients of $j_1,j_2,j_3$ seen as elements
in~$\Z[\alpha]$. We then specialize the results to the case of lifts from
finite fields, thereby yielding \cref{thm:intro} and its Hilbert analogue.
While case~\ref{case:proof-nf} is more general and perhaps more natural, more
information is needed to obtain meaningful complexity estimates: we need to
guarantee that the basis $(\alpha_1,\ldots,\alpha_{d_L})$ is reasonably reduced
and that we can evaluate their complex embeddings.

This section is structured as follows. First, we detail the additional
assumptions we make in case~~\ref{case:proof-nf} (§\ref{subsec:setting-2}). We
then focus on the complexity of embedding number field elements in~$\C$, and
conversely recognizing them from complex approximations, in each of the two
cases (§\ref{subsec:embed-1} and~§\ref{subsec:embed-2}). Finally, we state and
prove our main theorems over both finite fields and number fields
(§\ref{subsec:proof-ff} and~§\ref{subsec:proof-nf}).

\subsection{Further assumptions in case~2}
\label{subsec:setting-2}

We endow~$\Z_L$ with the euclidean metric induced by the map~$\Z_L\to \C^{d_L}$
given by the~$d_L$ complex embeddings~$\mu_1,\ldots,\mu_{d_L}$ of~$L$. Then~$\Z_L$
becomes a lattice of volume~$\Delta_L$ in the Euclidean
space~$\Z_L\otimes_\Z\R$. Denote by~$1\leq \nu_1 \leq \cdots\leq \nu_{d_L}$
the successive minima of~$\Z_L$. They satisfy the following
inequality~\cite[Chap.~2, Thm.~5]{nguyenLLLAlgorithmSurvey2009}:
\begin{equation}
  \label{eq:lattice-minima}
  \prod_{k=1}^{d_L} \nu_k \leq \kappa_{d_L}^{d_L/2} \Delta_L,
\end{equation}
where~$\kappa_{d_L}\leq 1+\frac {d_L}{4}$ denotes Hermite's constant~\cite[Chap.~2,
Cor.~3]{nguyenLLLAlgorithmSurvey2009}. We will always assume that
$(\alpha_1,\ldots,\alpha_{d_L})$ is HKZ-reduced, which by \cite[Chap.~2,
Thm.~6]{nguyenLLLAlgorithmSurvey2009} implies in particular that for each
$1\leq k\leq d$, we have
\begin{equation}
  \label{eq:hkz}
  \frac{4}{k+3}\leq \paren[\Big]{\frac{\norm{\alpha_k}}{\nu_k}}^2\leq \frac{k+3}{4}.
\end{equation}
Computing HKZ-reduced bases is exponential in~$d_L$ in general, but this
assumption is perhaps justified by the fact that computing an integral basis
of~$\Z_L$ is already an expensive operation which requires factoring the
discriminant~$\Delta_L$ of~$L$. One could alternatively work with an
LLL-reduced basis at the cost of slightly larger complexity estimates.

We now consider the matrix
\begin{displaymath}
  M_L=\paren[\big]{\mu_i(\alpha_j)}_{1\leq i,j\leq d_L}.
\end{displaymath}

\begin{lem}
  \label{lem:ML}
  If the basis $(\alpha_1,\ldots,\alpha_d)$ of~$\Z_L$ is HKZ-reduced, then
  $\logp\abs{M_L}$ and $\logp\abs{M_L^{-1}}$ are in  $O(d_L\log d_L + \log \Delta_L)$.
\end{lem}

\begin{proof}
  By~\eqref{eq:lattice-minima} and~\eqref{eq:hkz}, we have for each
  $1\leq j\leq d$:
  \begin{displaymath}
    \logp\norm{\alpha_j} = O(d_L\log d_L + \log \Delta_L),
  \end{displaymath}
  so $\logp\abs{M_L} = O(d_L\log d_L + \log\Delta_L)$. Note that
  $\abs{\det M_L} = \abs{\Delta_L}\geq 1$.  Each coefficient
  of~$(\det M_L) M_L^{-1}$ is the determinant of a submatrix of~$M_L$, so
  Hadamard's inequality yields
  \begin{displaymath}
    \abs{M_L^{-1}} \leq \prod_{k=1}^{d_L} \norm{\alpha_k} \leq (d_L\kappa_{d_L})^{d_L/2} \Delta_L,
  \end{displaymath}
  and hence $\logp\abs{M_L}^{-1} = O(d_L\log d_L + \log \Delta_L)$.
\end{proof}

We then make the following assumption.

\begin{assumption}
  \label{assumption}
  On the input of a precision~$N\leq 1$, one can
  compute $M_L$ and~$M_L^{-1}$ to precision~$N$ in time
  $\Otilde\paren[\big]{d_L^3 (N + d_L + \log \Delta_L)}$.
\end{assumption}

This complexity is almost quasi-linear in the total bit size of these matrices
by \cref{lem:ML} except that we allow matrix operations to have cubic
complexity in~$d_L$. This is a reasonable assumption, as one can simply
precompute this matrix to a precision~$N_0$ that is sufficient for Newton
iterations to start converging.

\subsection{Embedding and recognizing number field elements: case~1}
\label{subsec:embed-1}

In this case, embedding and reconstructing number field elements relies on
computing the complex roots of~$P_L$, denoted by~$\mu_k(\alpha)$ for
$1\leq k\leq d_L$. (This numbers the complex embeddings of~$L$.)
Since~$P_L\in \Z[X]$, as a particular case of \cref{thm:complex-roots}, one can
compute these roots to precision~$N$ in time $\Otilde(d_L N)$ with a precision
loss of $\Otilde(d_L^2 + d_L\log \abs{P_L})$ bits.

\begin{lem}
  \label{lem:height-alpha}
  For every~$1\leq i\leq d$, we have $\abs{\alpha_i}\leq 2\abs{P_L}$. In
  particular $h(\alpha)\leq \log(2\abs{P_L})$.
\end{lem}

\begin{proof}
  This is classical.  Write $P_L = \sum_{j=0}^{d_L} p_j X^j$, and assume that
  $\abs{\mu_k(\alpha)}> 2\abs{P_L}$ for some~$k$. In particular
  $\abs{\mu_k(\alpha)}>2$. Since $p_{d_L}=1$, we get the contradiction
  \begin{displaymath}
    \abs{\mu_k(\alpha)}^{d_L} \leq \sum_{j=0}^{d_L-1} \abs{p_j}\abs{\mu_k(\alpha)}^j \leq
    \abs{P_L}\frac{\abs{\mu_k(\alpha)}^{d_L} - 1}{\abs{\mu_k(\alpha)}-1}
    \leq \abs{P_L}\cdot 2\abs{\mu_k(\alpha)}^{d_L-1}. \qedhere
  \end{displaymath}
\end{proof}

\begin{lem}
  \label{lem:Pprime}
  For each $1\leq k\leq d_L$, we have
  $\logp\abs{P_L'(\mu_k(\alpha))}^{-1} = O(d_L\log\abs{P_L} + d_L\log d_L)$. We can
  evaluate $P_L'(\mu_k(\alpha))^{-1}$ for every $1\leq k\leq d_L$ to precision~$N$ in
  time $\Otilde(d_L^2\log\abs{P_L})$ with a precision loss of
  $O(d_L \log\abs{P_L} + d_L\log d_L)$ bits.
\end{lem}

\begin{proof}[Proof (sketch)]
  We have $\log\abs{P_L'}\leq \log\abs{P_L}+\log d_L$. The discriminant~$\Delta_{P_L}\in \Z$
  of~$P_L$ is the resultant of~$P_L$ and~$P_L'$. Hence we can write
  \begin{displaymath}
    UP_L + VP'_L = \Delta_{P_L}
  \end{displaymath}
  with $U,V\in\Z[X]$; the coefficients of~$U,V$ have expressions as
  determinants of size~$O(d_L)$ involving the coefficients of~$P_L$
  and~$P_L'$. By Hadamard's inequality, we have
  \begin{displaymath}
    \log \abs{V} = O(d_L\log\abs{P_L} + d_L\log d_L).
  \end{displaymath}
  Moreover, $V$ and $\Delta_{P_L}$ can be computed in time
  $\Otilde(d_L^2\log\abs{P_L})$
  \cite[§11.2]{vonzurgathenModernComputerAlgebra2013}.

  For every $1\leq k\leq d_L$, we have
  $1/P'(\mu_k(\alpha)) = V(\mu_k(\alpha))/\Delta_P$, so it is enough to
  evaluate $V(\mu_k(\alpha))$ for every $1\leq i\leq d$. This can be done with
  a fast multipoint evaluation algorithm: one considers dyadic numbers
  sufficiently close to the roots $\mu_k(\alpha)$ for $1\leq k\leq d_L$, runs
  the multipoint evaluation algorithm on this exact data, then propagates error
  bounds using \cref{lem:height-alpha}. We claim that this computation runs in
  time $\Otilde(d_L^2\log\abs{P_L})$ with a precision loss of
  $O(d_L\log\abs{P_L} + d_L\log d_L)$ bits.
\end{proof}

\begin{lem}
  \label{lem:embed-1}
  Let~$x\in \Z[\alpha]$ be presented as a polynomial $P_x$ of degree at most
  $d_L-1$.  Given the roots $\mu_k(\alpha)$ for $1\leq k\leq d_L$ of~$P_L$ to
  precision~$N$, one can evaluate $\mu_k(x)$ for all~$k$ in time
  $\Otilde(d_L N)$ with a precision loss
  of~$\Otilde(\logp\abs{P_x} + d_L\log\abs{P_L})$ bits.
\end{lem}

\begin{proof}[Proof (sketch)]
  Our task is to evaluate $P_x$ at~$\mu_k(\alpha)$ for $1\leq k\leq d_L$. This
  can be done with the stated complexity and precision losses using a fast
  multipoint evaluation algorithm as in \cref{lem:Pprime}.
\end{proof}

Conversely, computing an element~$P_x\in \Z[\alpha]$ from its evaluations at
the complex roots of~$P_L$ is a Lagrange interpolation problem which we solve
as follows.

\begin{lem}
  \label{lem:recognize-1}
  Let~$x\in \Z[\alpha]$ be represented as a polynomial $P_x$ of degree at most
  $d_L-1$. Given the values $\mu_k(\alpha), 1/P_L'(\mu_k(\alpha))$ as well as
  $\mu_k(x)$ for $1\leq k\leq d_L$ to precision~$N$, one
  can recover~$P_x\in \C[X]$ in time
  $\Otilde(d_L N + d_L\log\abs{P_x} +d_L^2\log\abs{P_L})$ with a precision loss
  of $O(\logp\abs{P_x} +d_L\log\abs{P_L} + d_L\log d_L)$ bits. In particular,
  one can recover $P_x\in \Z[X]$ as soon as
  $N\geq K(\logp\abs{P_x} +d_L\log\abs{P_L} +d_L\log d_L)$ for some absolute
  constant~$K$.
\end{lem}

\begin{proof}
  For every~$1\leq k\leq d_L$, we have by \cref{lem:height-alpha}
  \begin{displaymath}
    \abs{\mu_k(x)} = \abs{P_x(\mu_k(\alpha))} \leq \sum_{j=0}^d \abs{P_x} \abs{\mu_k(\alpha)}^j
    \leq \abs{P_x} \cdot 4\abs{P_L}^{d_L}.
  \end{displaymath}
  The polynomial~$P_x$ interpolates the points
  $(\mu_k(\alpha), \mu_k(x))$ for $1\leq k\leq d_L$, and thus
  \begin{displaymath}
    P_x = \sum_{k=1}^{d_L} \frac{\mu_k(x)}{P'(\mu_k(\alpha))} \prod_{j\neq k} (X - \mu_j(\alpha)).
  \end{displaymath}
  We apply \cref{prop:prodtree} with $B = O(\log \abs{P_L})$ and
  $C = O(d_L \log \abs{P_L} + \logp\abs{P_x} + d_L\log d_L)$ by
  \cref{lem:height-alpha,lem:Pprime}, and the result follows.
\end{proof}

\subsection{Embedding and recognizing integers in number fields: case~2}
\label{subsec:embed-2}

Recall that in~§\ref{subsec:setting-2}, we assumed that the basis
$(\alpha_1,\ldots,\alpha_{d_L})$ of~$\Z_L$ is HKZ-reduced, and than we have
access to an oracle which computes the matrices~$M_L$ and~$M_L^{-1}$
(\cref{assumption}). For every~$x = \sum_{k=1}^{d_L} x_k \alpha_k \in \Z_L$, we
have by definition of~$M_L$
\begin{equation}
  \label{eq:linear-embedding}
  \left(
    \begin{matrix}
      \mu_1(x)\\\vdots\\\mu_{d_L}(x)
    \end{matrix}
  \right)
  = M_L
  \left(
    \begin{matrix}
      x_1\\\vdots\\x_{d_L}
    \end{matrix}
  \right) \quad \text{and consequently}\quad
  \left(
    \begin{matrix}
      x_1\\\vdots\\x_{d_L}
    \end{matrix}
  \right)
  = M_L^{-1}
  \left(
    \begin{matrix}
      \mu_1(x)\\\vdots\\\mu_{d_L}(x)
    \end{matrix}
  \right).
\end{equation}
By \cref{lem:ML}, we have
$\logp\abs{x_k} = O(d_Lh(x) + d_L \log d_L + \log \Delta_L)$ for all
$1\leq k\leq d_L$.

\begin{lem}
  \label{lem:embed-2}
  Let~$x = \sum_{k=1}^{d_L} x_k\alpha_k\in \Z_L$. Then, given the coefficients
  $x_k$ for $1\leq k\leq d_L$ and given the matrix $M_L$ to precision~$N$, one
  can compute $\mu_k(x)$ for every $1\leq k\leq d_L$ in time $\Otilde(d_L^2 N)$
  with a precision loss of $O(d_L h(x) + d_L \log d_L + \log \Delta_L)$ bits.
\end{lem}

\begin{proof}
  We perform the matrix-vector product of $M_L$ by $(x_1,\ldots, x_{d_L})$ by
  making $d_L^2$ products, then summing. The complexity estimates directly
  follow from \cref{lem:elementary} and the above bounds on $\abs{M_L}$ and
  $\logp\abs{x_k}$ for $1\leq k\leq d_L$.
\end{proof}

\begin{lem}
  \label{lem:recognize-2}
  There exists an absolute constant~$K$ such that the following holds. Let
  $x = \sum_{k=1}^{d_L} x_k\alpha_k\in \Z_L$. Then, given $\mu_k(x)$ for
  $1\leq k\leq d_L$ to precision~$N$ and given $M_L^{-1}$ to
  precision~$N + K d_L h(x)$, one can compute the coefficients
  $x_1,\ldots, x_k$ as complex numbers in time $\Otilde(d_L^2 N + d_L^2 h(x))$
  with a precision loss of $O(d_L \log d_L + \log \Delta_L)$ bits, and in
  particular recover $x\in \Z_L$ as soon as
  $N\geq K(d_L \log d_L + \log \Delta_L)$.
\end{lem}

\begin{proof}
  We consider the second matrix-vector product
  of~\eqref{eq:linear-embedding}. By definition of~$h$, we have
  $\sum_{k=1}^{d_L}\logp \abs{\mu_k(x)} = d_L h(x)$. Thus, we can compute the
  dot product of $(\mu_1(x),\ldots,\mu_k(x))$ with each line of~$M_L^{-1}$ in
  time $\Otilde(d_L h(x) + d_L^2 \log d_L + d_L\log \Delta_L)$.
\end{proof}

\subsection{Main theorems over finite fields}
\label{subsec:proof-ff}

In the case of a finite field, we are given a prime power $q=p^d$, and a monic
polynomial $P\in \Z[X]$ of degree~$d$, irreducible modulo~$p$, and we represent
elements of~$\F_q$ as elements of $\F_p[X]/(P)$. We have the following analogue
of \cref{thm:intro} in the Hilbert case.

\begin{thm}
  \label{thm:hilbert-ff}
  Let~$F = \Q(\sqrt{5})$ or $F = \Q(\sqrt{8})$. Then there exists an explicitly
  computable polynomial $Q\in \Z[G_1,G_2]$ such that the following
  holds. Let~$\beta\in \Z_F$ be a totally positive element, prime to the
  discriminant of~$F$, of prime norm $\ell\in \Z$. Given $g_1,g_2\in \F_q$ such
  that $Q(g_1,g_2)\neq 0$, \cref{algo:skeleton,algo:numerical-hilbert} together
  with \cref{lem:embed-1,lem:recognize-1} allow us to evaluate
  $\Psi_{\beta,k}(g_1,g_2,X)$ and $\partial_{G_n} \Psi_{\beta,k}(g_1,g_2,X)$
  for $1\leq k,n\leq 2$ as elements of~$\F_q[X]$ in time
  $\Otilde(\ell^2 \log q + \ell^2 d^2 \log\abs{P} +\ell d \log q + \ell
  d^3\log\abs{P})$.
\end{thm}

If $d\log\abs{P} = O(\log p)$, then the complexity estimate simplifies to
$\Otilde(\ell^2\log q + \ell d \log q)$ binary operations. In particular, the
algorithm runs in quasi-linear time in the output size when~$d$ is fixed, for
instance when $q=p$ is prime.

\begin{proof}
  We focus on the evaluation of $\Psi_{\beta,k}$ for $1\leq k\leq 2$; their
  partial derivatives can be treated similarly.  Let~$L$ be the number
  field~$\Q[X]/(P)$, and let~$\alpha$ be a root of~$P$ in~$L$. We lift~$g_1$
  and~$g_2$ to elements of~$\Z[\alpha]$ in such a way that the height of their
  coefficients is bounded above by~$\log p$. Then
  \begin{displaymath}
    \max\{h(g_1),h(g_2)\}
    \leq \log(p) + d h(\alpha) + \log(d) = O(d\log\abs{P} + \log p).
  \end{displaymath}

  Since~$Q_\beta$ and the coefficients of~$Q_\beta P_{\beta,k}$ for
  $1\leq k\leq 2$ are polynomials in $\Z[g_1,g_2]$ of degree~$O(\ell)$ and
  height~$O(\ell\log \ell)$, the algebraic numbers we have to recognize are all
  elements~$x\in\Z[\alpha]$ represented by polynomials
  $\widetilde{P}_x\in \Z[\alpha]$ of degree $O(\ell d)$ with coefficients of
  size $O(\ell \log \ell + \ell \log p + \log(d\ell))$. Applying the Euclidean
  algorithm, we see that the reminder~$P_x$ of~$\widetilde{P}_x$ modulo~$P$
  satisfies $\logp\abs{P_x} = O(\ell \log\ell + \ell\log p + d\ell \log\abs{P})$.

  We first backtrack to assess the necessary precision at the start of
  \cref{algo:numerical-hilbert}. To apply \cref{lem:recognize-1}, we need to
  compute $P_{\beta,k}(\mu(g_1),\mu(g_2),X)$ and $Q_{\beta}(\mu(g_1),\mu(g_2))$
  to precision
  $N = O(\ell\log \ell + \ell \log p + d\ell\log\abs{P} + d\log d)$ for all
  complex embeddings~$\mu$ of~$L$. By \cref{prop:cost-hilbert}, this can be
  done if $\mu(g_1)$ and $\mu(g_2)$ are first computed to precision
  $N' = \Otilde(d^2\log\abs{P} + d\log p + \ell \log p + d\ell \log\abs{P})$.

  We now analyze the cost of the whole algorithm. First, we apply
  \cref{lem:embed-1}: we need to compute the roots of~$P$ to precision
  $N' + O(\log p + d\log\abs{P})$, which can be done in~$\Otilde(d N')$ binary
  operations. Computing $\mu(g_1)$ and $\mu(g_2)$ to precision~$N'$ can be done
  in time $\Otilde(d N')$. After that, \cref{algo:numerical-hilbert} runs in
  time $\Otilde(d \ell N')$ by \cref{prop:cost-hilbert}. Finally, to apply
  \cref{lem:recognize-1}, we need to compute $1/P'(\mu_k(\alpha))$ for
  $1\leq k\leq d$ to precision~$N'$, which can be done in time $\Otilde(d N')$
  by \cref{lem:Pprime}, and then recovering $P_x$ costs $\Otilde(\ell d N')$ binary
  operations. Summing up the cost of each step yields the result.
\end{proof}

We note that the precision losses throughout \cref{algo:numerical-hilbert} are
dominated by the necessary precision to recognize integers at the very end, and
that steps~\ref{step:hilbert-loop} and~\ref{step:hilbert-prodtree} of
\cref{algo:numerical-hilbert} dominate the running time. This is also true in
practice.

In the Siegel case, we similarly have the following theorem, which implies
\cref{thm:intro}.

\begin{thm}
  \label{thm:siegel-ff}
  Let~$\ell$ be a prime number. Then, given Igusa invariants
  $j_1,j_2,j_3\in \F_q$ where the modular functions $\det(\mf{dJ})$ and
  $Q_\ell(\mf{j}_1,\mf{j}_2,\mf{j}_3)$ do not vanish,
  \cref{algo:skeleton,algo:numerical-siegel} together with
  \cref{lem:embed-1,lem:recognize-1} allow us to evaluate
  $\Psi_{\ell,k}(j_1,j_2,j_3, X)$ and
  $\partial_{J_n}\Psi_{\ell,k}(j_1,j_2,j_3,X)$ for $1\leq k,n\leq 3$ in time
  $\Otilde(\ell^6 \log q + \ell^6 d^2\log \abs{P} + \ell^3 d\log q +
  \ell^3d^3\log\abs{P})$.
\end{thm}

\subsection{Main theorems over number fields}
\label{subsec:proof-nf}

We now are in the setting of~§\ref{subsec:setting-2}: $L$ is a number field, we
have computed an HKZ-reduced basis of~$\Z_L$, and have access to an algorithm
which computes the matrices $M_L$ and~$M_L^{-1}$ to any desired precision.

\begin{thm}
  \label{thm:hilbert-nf}
  Let $F = \Q(\sqrt{5})$ or $F = \Q(\sqrt{8})$. Then there exists an explicitly
  computable polynomial $Q\in \Z[G_1,G_2]$ such that the following
  holds. Let~$\beta\in \Z_F$ be a totally positive prime, prime to the
  discriminant of~$F$, of prime norm $\ell\in \Z$. Let $H\geq 1$, and
  let~$g_1,g_2\in L$ be presented as quotients of integers of height at
  most~$H$ such that $Q(g_1,g_2)\neq 0$. Then
  \cref{algo:skeleton,algo:numerical-hilbert} together with
  \cref{lem:embed-2,lem:recognize-2} allow us to evaluate a certain nonzero
  denominator $D\in \Z_L$, as well as $D\Psi_{\beta,k}(g_1,g_2,X)$ and
  $D \partial_{G_n}\Psi_{\beta,k}(g_1,g_2,X)$ for $1\leq k,n\leq 2$ as elements
  of~$\Z_L[X]$, in time $\Otilde(\ell^2 d_L^2 H + \ell d_L^2 \log \Delta_L + \ell d_L^4 H + d_L^3\log \Delta_L)$.
\end{thm}

If~$d_L$ is fixed, then the complexity estimate in this theorem is simply
$\Otilde(\ell^2 H + \ell^2 \log \Delta_L)$ binary operations. In particular,
when $L=\Q$, we evaluate the Hilbert modular equations of level~$\beta$ at
rational numbers of height at most~$H$ in quasi-linear time
$\Otilde(\ell^2 H)$. We also note that the exponents of~$d_L$ in the complexity
estimate could be decreased if we rely on fast matrix algorithms in
\cref{assumption} and \cref{lem:embed-2,lem:recognize-2}, or if we were able to
compute columns of $M_L$ and $M_L^{-1}$ independently at different precisions.

\begin{proof}
  As above, we focus on the evaluation of~$\Psi_{\beta,k}$ for $k=1,2$: their
  partial derivatives can be treated similarly. Let~$a_1,a_2,b\in \Z_L$ be
  elements of height at most~$H$ such that $g_k = a_k/b$ for $k = 1,2$. We know
  that $Q_\beta$ and $P_{\beta,k}$ for $1\leq k\leq 2$ have total degree at
  most $\floor{10(\ell+1)/3}$ in~$G_1,G_2$ and height $O(\ell\log\ell)$, so we
  can take~$D = b^{\floor{10(\ell+1)/3}} Q_\beta(g_1,g_2)$. Then~$D$ and all
  coefficients of
  $D \Psi_{\beta,k}(g_1,g_2,X) = b^{\floor{10(\ell+1)/3}}
  P_{\beta,k}(g_1,g_2,X)$ for $1\leq k\leq 2$ are elements of~$\Z_L$ of height~$\Otilde(\ell H)$.

  As in \cref{thm:hilbert-ff}, we first backtrack to assess the necessary
  numerical precision at the start of \cref{algo:numerical-hilbert}.
  Let~$x\in \Z_L$ be one of the coefficients we wish to compute. To apply
  \cref{lem:recognize-2}, we need to compute $\mu_k(x)$ to precision
  $O(d_L \log d_L + \log \Delta_L)$ for each $1\leq k\leq d_L$. By
  \cref{prop:cost-hilbert}, this can be done if $\mu_k(g_1)$ and $\mu_k(g_2)$
  are computed to precision $N' = \Otilde(\ell d_L H + \log \Delta_L)$. Up to
  increasing the implied constants, it is sufficient to compute
  $\mu_k(a_1),\mu_k(a_2)$ and~$\mu_k(b)$ to precision~$N'$.

  We finally analyze the cost of the whole algorithm. First, to apply
  \cref{lem:embed-2}, we need to compute $M_L$ to precision~$N'$. This can be
  done in time $\Otilde(d_L^3 N')$ by \cref{assumption}. Then,
  computing~$\mu(g_1)$ and~$\mu(g_2)$ can be done in time $\Otilde(d_L^2
  N')$. After that, \cref{algo:numerical-hilbert} runs in time
  $\Otilde(\ell N')$ in each of the $d_L$ embeddings of~$L$. Finally, we apply
  \cref{lem:recognize-2} with $N = O(d_L \log d_L + \log \Delta_L)$: we compute
  $M_L^{-1}$ to precision~$N'$ in time $\Otilde(d_L^3 N')$, and then recovering
  each of the $O(\ell)$ coefficients costs~$\Otilde(d_L^2 N + \ell d_L^2 H)$
  binary operations. Summing up the cost of each step yields the result.
\end{proof}

In the Siegel case, we similarly obtain:

\begin{thm}
  \label{thm:siegel-nf}
  Let~$\ell$ be a prime number and $H\geq 1$. Then, given
  Igusa invariants $j_1,j_2,j_3\in L$ presented as quotients of integers of
  height at most~$H$ and where the modular functions $\det(\mf{dJ})$ and
  $Q_\ell(\mf{j}_1,\mf{j}_2,\mf{j}_3)$ do not vanish,
  \cref{algo:skeleton,algo:numerical-siegel} together with
  \cref{lem:embed-2,lem:recognize-2} allow us to evaluate a certain nonzero
  denominator $D\in \Z_L$ as well as $D\Psi_{\ell,k}(j_1,j_2,j_3,X)$ and
  $D \partial_{J_n}\Psi_{\ell,k}(j_1,j_2,j_3,X)$ for $1\leq k,n\leq 3$ as
  elements of~$\Z_L[X]$, in time
  $\Otilde(\ell^6 d_L^2 H + \ell^3 d_L^2 \log \Delta_L + \ell^3 d_L^4 H +
  d_L^3\log \Delta_L)$.
\end{thm}

\printbibliography

\end{document}